\newtheorem{teo}{Theorem}[section]
\newtheorem*{teo*}{Theorem}
\newtheorem{lem}[teo]{Lemma}
\newtheorem{cor}[teo]{Corollary}
\newtheorem{pro}[teo]{Proposition}
\theoremstyle{definition}
\newtheorem{fed}[teo]{Definition}
\theoremstyle{remark}
\newtheorem{rem}[teo]{Remark}
\newtheorem{exas}[teo]{Examples}
\def\coma{\, , \, }
\def\py{\peso{and}}
\newcommand{\peso}[1]{ \quad \text{ #1 } \quad }
\def\n0{n_{ \text{\rm \tiny o}}}
\def\bce{\begin{center}}
\def\ece{\end{center}}
\def\cO{{\mathcal O}}
\def\cJ{{\mathcal J}}
\def\noi{\noindent}
\def\cF{\mathcal F}
\def\cG{\mathcal G}
\def\bm{\left[\begin{array}}
\def\em{\end{array}\right]}
\def\ben{\begin{enumerate}}
\def\een{\end{enumerate}}
\def\bit{\begin{itemize}}
\def\eit{\end{itemize}}
\def\barr{\begin{array}}
\def\earr{\end{array}}
\def\la{\lambda}
\def\N{\mathbb{N}}
\def\R{\mathbb{R}}
\def\cJ{\mathcal{J}}
\def\cC{\mathcal{C}}
\def\cE{\mathcal{E}}
\def\cH{\mathcal{H}}
\def\cK{\mathcal{K}}
\def\cP{\mathcal{P}}
\def\cR{{\cal R}}
\def\cS{{\cal S}}
\def\cT{{\cal T}}
\def\cI{{\cal I}}
\def\cB{{\cal B}}
\def\cN{{\cal N}}
\def\cV{{\cal V}}
\def\cU{{\cal U}}
\def\cL{{\cal L}}
\def\cX{\mathcal{X}}
\def\gli{\mathcal{G}\ell_{\mathcal{J}}}
\def\fS{\mathfrak{S}}
\def\da{^\downarrow}
\DeclareMathOperator{\Tr}{Tr}
\def\beq{\begin{equation}}
\def\eeq{\end{equation}}
\def\pausa{\medskip\noi}
\def\Ax2{\,( S_{E(\cF)^\#_\cV})\hat{}_x }
\newcommand{\PI}[2]{\left\langle #1 , #2 \right\rangle}
\begin{document}
\title{Restricted orbits of closed range operators and equivalences between frames for subspaces}
\author{Eduardo Chiumiento and Pedro Massey}
\date{}
\maketitle

\begin{abstract}
Let $\cH$ be a separable infinite-dimensional complex Hilbert space and let $\cJ$ be a two-sided ideal of the algebra of bounded operators $\cB(\cH)$. The groups $\cG \ell_\cJ$ and $\cU_\cJ$ 
consist of all the invertible operators and unitary operators of the form $I + \cJ$, respectively. We study the actions of these groups on the set of closed range operators. 
First, we find equivalent characterizations of the $\cG \ell_\cJ$-orbits involving the essential codimension. These characterizations can be made more explicit in the case of arithmetic mean closed ideals. Second, we give characterizations of the $\cU_\cJ$-orbits by using recent results on restricted diagonalization. Finally we introduce the notion of $\cJ$-equivalence and $\cJ$-unitary equivalence between frames for subspaces of a Hilbert space, and we apply our abstract results to obtain several results regarding duality and symmetric approximation of $\cJ$-equivalent frames.
\end{abstract}

\bigskip

{\bf 2010 MSC:} 47A53, 42C99, 47B10.

{\bf Keywords:} operator ideal, closed range operator, essential codimension, restricted diagonalization, frames for subspaces, optimal approximation of frames.

\tableofcontents

\newpage

\section{Introduction}

Let $\cH$ be a separable infinite-dimensional complex Hilbert space and let $\cS\subset \cH$ be a closed subspace. Briefly, a sequence $\cF=\{f_n\}_{n\geq 1}$ in $\cS$ is called a frame for $\cS$ if every vector $v\in\cS$ can be written as an infinite linear combination (series) of the elements of $\cF$ with coefficients in $\ell_2(\N)$, in such a way that this representation is stable (see Section \ref{sec frames elem teo} for details). In general, a frame $\cF$ allows for redundant representations, a fact that plays an important role in the applications of frame theory.
For every frame $\cF$ for $\cS$ as above, there are two associated bounded linear operators, 
denoted $T_\cF,\,S_\cF\in \cB(\cH)$  that are determined by $T_\cF(e_n)=f_n$ and $S_\cF(v)=\sum_{n\geq 1} \langle v, f_n\rangle f_n$, where $\{e_n\}_{n\geq 1}$ denotes a fixed orthonormal basis of $\cH$ and $v\in\cH$. $T_\cF$ and $S_\cF$ are called the synthesis and frame operator of $\cF$, respectively. It turns out that these operators have closed range and are related by the identity $S_\cF=T_\cF\,T_\cF^*$. 

Many of the fundamental properties of a frame $\cF$ for a closed subspace $\cS$ can be described in terms of properties of its synthesis operator $T_\cF$. This in turn has motivated the use of operator theory in Hilbert spaces to tackle some central problems in frame theory (see \cite{AMRS,EC19,CPS,FPT02} related to the present work). On the other hand, several problems that originated within frame theory have motivated important progresses in operator theory as well (see \cite{Kad02,KL17,L19}).  

In his seminal work on comparisons of frames, Balan \cite{RB99} introduced the notion of equivalent frames for $\cH$. This notion can be described in terms of a natural left action of $\cG\ell(\cH)$, the group of invertible operators acting on $\cH$, on the set of frame operators. Explicitly, two frames $\cF=\{f_n\}_{n\geq 1}$ and
$\cG=\{g_n\}_{n\geq 1}$ for $\cH$ are equivalent if there exists $G\in\cG\ell(\cH)$ such that $G(f_n)=g_n$, $n\geq 1$ or equivalently $GT_\cF=T_\cG$.  A similar type of equivalence was considered by Corach, Pacheco and Stojanoff in \cite{CPS}, but with respect to the right action of $\cG\ell(\cH)$ on the set of frames operators. Moreover, the authors in the latter work endowed the orbits of frames $\cF$ for $\cH$  (i.e. such that $T_\cF$  is an epimorphism) under this right action with a homogeneous space structure. This last fact motivated the study of several geometrical and metric problems corresponding to this right action. It turns out that this perspective corresponds to the study of homogeneous space structures induced by actions of Banach-Lie groups on manifolds defined in operator theory (see \cite{B, Up85}). 
In \cite{FPT02} Frank,  Paulsen and Tiballi considered the symmetric approximation of frames for closed subspaces. In this context, given a frame $\cF=\{f_n\}_{n\geq 1}$ for the subspace $\cS\subset \cH$ the problem is to find the Parseval frames $\cX=\{x_n\}_{n\geq 1}$ for a closed subspace $\cT$ of $\cH$ that are weakly similar to $\cF$, i.e. for which there exists a bounded invertible transformation $L:\cS\rightarrow \cT$ such that $L(f_n)=x_n$, $n\geq 1$, that minimize the expression 
$$\sum_{n\geq 1} \|f_n-x_n\|^2\in[0,\infty]\,.$$ 
Notice that the statement of the problem includes an equivalence-type condition between $\cF$ and $\cX$.

Let $\cB(\cH)$ be the algebra of bounded linear operators on $\cH$. A two-sided ideal $\cJ$ of $\cB(\cH)$ is called an operator ideal. The ideal is said to be proper when $\cJ \neq 0, \, \cB(\cH)$. Motivated by the aforementioned works \cite{RB99,CPS,FPT02}, in the present work we give
a localized version of the equivalence between frames  for closed subspaces, where the localization is with respect to proper operator ideals. Indeed, given a proper operator ideal $\cJ$, we take the restricted invertible group given by $\cG\ell_\cJ:=\cG\ell (\cH) \cap (I + \cJ)$.  We then say that two frames $\cF=\{f_n\}_{n\geq 1}$ and $\cG=\{g_n\}_{n\geq 1}$ for closed subspaces $\cS$ and $\cT$, respectively, are $\cJ$-equivalent if there exists $G \in \cG\ell_\cJ$ such that $G(f_n)=g_n$, $n\geq 1$. 
This setting makes  possible to develop analogs of the symmetric approximation problem for frames for closed subspaces with respect to the operators ideals usually known as symmetrically-normed ideals. 

The analysis of $\cJ$-equivalent frames can be carried out in terms of the synthesis operators of the corresponding frames. This suggests the study of  actions of restricted groups on the set of all closed range operators $\cC \cR$, which would then apply to frames for closed subspaces. We take this last point of view and consider an equivalence relation on $\cC \cR$ in terms of the restricted invertible groups $\cG\ell_\cJ$ induced by proper operator ideals $\cJ$.  In this setting, the equivalence relation is defined by the orbits of the action $G \cdot A=GA$, where $G \in \cG \ell_\cJ$ and $A \in \cC \cR$. 
Also we consider a weaker localized notion of equivalence given by  an action of the product group $\cG\ell_\cJ\times \cG\ell_\cJ$ as follows $(G,K) \cdot A=GAK^{-1}$, where $G,K \in \cG \ell_\cJ$ and $A \in \cC \cR$. We further study the more rigid restricted unitary equivalences induced by the subgroup $\cU_\cJ:=\cU(\cH) \cap (I + \cJ)$, where $\cU(\cH)$ is the full unitary group. These equivalences are defined by taking the restriction of the previous actions to these unitary groups. 
This type of study has already been considered for different operator ideals and closed range operators satisfying some further properties (see \cite{AL08,BPW16,C85,EC10,CMM}).


We first obtain general characterizations of the different orbits induced by the actions of the groups $\cG\ell_\cJ$, $\cG\ell_\cJ\times \cG\ell_\cJ$, $\cU_\cJ$ and $\cU_\cJ\times \cU_\cJ$ on closed range operators, where $\cJ$ is a proper operator ideal. We model some natural problems of frame theory in this abstract framework. Then, we apply our abstract approach in the context of frame theory and obtain several results including the structure of optimal oblique duals and symmetric approximation for $\cJ$-equivalent frames for general  proper operator ideals $\cJ$. 
Since the role of the closed range operators $T_\cF$ and $T_\cF^*$ of a frame $\cF$ for a closed subspace $\cS$ are symmetric, our abstract results can also be interpreted for the right actions of the restricted groups $\cG\ell_\cJ$ and $\cU_\cJ$ on the set of frames for subspaces, corresponding to the point of view considered in \cite{CPS}. We point out that the abstract operator theory approach unveils a rich algebraic and analytic structure of the orbits under consideration. 
Our  tools   include the  index of Fredholm pairs of projections known as essential codimension (see \cite{AS94, ASS94,BDF73,C85, KL17}), and the notion of restricted diagonalization (\cite{BPW16,L19, ECPM1}). We also make use of the theory of operator ideals, arithmetic mean closed operator ideals and their relation with the notion of submajorization (see \cite{Dyk04,KW11}). In particular, we consider the symmetrically-normed  ideals (see \cite{GK60, S79}).

The paper is organized as follows. In Section \ref{sec prelis} we recall some elementary properties of the set of closed range operators, operator ideals, restricted invertible and unitary operators, essential codimension and restricted unitary orbits of partial isometries. In Section \ref{sec3} we introduce the orbits of closed range operators under the action of $\cG\ell_\cJ$ and study some spatial characterizations of the elements of these orbits. Next we characterize when the orbit of a closed range operator under the action of $\cG\ell_\cJ$ contains some special operators, and obtain a result on optimal approximation of operators within this orbit. In Section \ref{sec4} introduce and study the orbits of closed range operators under the actions of the group $\cU_\cJ$ of restricted unitary operators. We include a detailed analysis of closed range operators that admit a block singular value decomposition, which include operators having a finite number of singular values (e.g. partial isometries, normal operators with finite spectrum). In Section \ref{sec res equiv frames} we introduce the different notions of restricted equivalence between frames for closed subspaces of a Hilbert space and apply the results of the previous sections in this setting. In particular, we obtain results related to optimal oblique duals and symmetric approximation of frames with respect to symmetrically-normed ideals.

\section{Preliminaries}\label{sec prelis}
 Let $\cB(\cH)$ be the algebra of bounded linear operators acting on $\cH$, and let $\cB(\cH)^+$ be the cone of positive semidefinite operators. Given $A \in \cB(\cH)$, we write $R(A)$ and $N(A)$ for the range and nullspace of $A$, respectively, and let $\sigma(A)$ be the spectrum of $A$. The orthogonal projection onto 
a closed subspace $\cS \subseteq \cH$ is denoted by $P_\cS$.

\medskip

\noi \textit{Closed range operators.}  The set of all \textit{closed range operators} on $\cH$ is given by 
$$
\cC\cR=\{ A \in \cB(\cH)\, : \, R(A) \text{ is a closed subspace} \}.
$$
For $A \in \cB(\cH)$, $A\neq 0$, the \textit{reduced minimum modulus}  is  $\gamma(A)=\inf \{ \|Af\| :  f \in N(A)^\perp, \, \|f\|=1\}$. It is well known that $A \in \cC \cR$ if and only if $\gamma(A)>0$. It can be shown that $\gamma(A)=\min_{\lambda \in \sigma(|A|)\setminus \{ 0\}} \lambda$, where $|A|=(A^*A)^{1/2}$ is the operator modulus. From the latter characterization, it follows that  $\gamma(|A|)^2=\gamma(A)^2=\gamma(AA^*)=\gamma(A^*A)=\gamma(A^*)^2=\gamma(|A^*|)^2$. In particular, $\cC\cR\subset \cB(\cH)$ is closed under taking operator adjoint and  modulus. 
The positive part of $\cC \cR$  is denoted by $\cC \cR^+:= \cB(\cH)^+ \cap \cC \cR$.

 For an operator $A \in \cC\cR$, its Moore-Penrose inverse $A^\dagger$ is the bounded linear operator
uniquely determined by the four conditions $AA^\dagger A=A$, $A^\dagger AA^\dagger =A^\dagger$,  $A A^\dagger=P_{R(A)}$ and 
$A^\dagger A=P_{N(A)^\perp}$. For $A\in \cC\cR$,  $\gamma(A)=\|A^\dagger\|^{-1}$, where $\| \, \cdot \, \|$ is the operator norm. 
The following useful identity was proved in  \cite{W73, S77} for matrices. It can be generalized to closed range operators following the same proof: for $A,B \in \cC\cR$, 
\begin{equation}\label{stewart identity}
A^\dagger - B^\dagger=-A^\dagger(A-B)B^\dagger + A^\dagger (A^* )^\dagger (A^* - B^*)(I - BB^\dagger) + (I-A^\dagger A)(A^* - B^*) (B^*)^\dagger  B.
\end{equation}  
Recall that an operator $V \in \cB(\cH)$ is a partial isometry  if $VV^*V=V$, or equivalently, $VV^*$ is an orthogonal projection. This is also equivalent to say that $\|Vf\|=\|f\|$, for all $f \in N(V)^\perp$.  We use the notation:
$$
\cP\cI=\{  V \in \cB(\cH) :   V  \text{ is a partial isometry }\}.
$$
It follows easily that $\cP\cI \subset \cC\cR$,  $VV^*=P_{R(V)}$ (\textit{final projection}), and $V^*V$ is also a projection satisfying $V^*V=P_{N(V)^\perp}$(\textit{initial projection}).

We use the notation $A=V_A|A|$ for the polar decomposition of an operator $A \in \cB(\cH)$, where 
$|A|\in\cB(\cH)^+$  and $V_A$ is the partial isometry uniquely determined by the condition $N(V_A)=N(A)$.
 We recall here the following characterization: if $A=VC$ for a positive operator $C$ and  a partial isometry $V$ such that $R(V^*)=\overline{R(C)}$, then it must be $V=V_A$ and $C=|A|$.

\medskip

\noi \textit{Operator ideals and restricted groups.} 
An \textit{operator ideal} is a two-sided ideal $\cJ$ of $\cB(\cH)$. The ideal is \textit{proper} when 
$\{0\}\neq \cJ\neq \cB(\cH)$. Operator ideals are closed under the operator adjoint ($A^* \in \cJ$ whenever $A \in \cJ$). Another useful property is the following:  $\cF \subseteq \cJ  \subseteq \cK $, for any proper operator ideal $\cJ$, where $\cF=\cF(\cH)$ and $\cK=\cK(\cH)$ denote the ideals of finite-rank operators and compact operators on $\cH$, respectively. 

Apart from the finite-rank and compact operators, the $p$-Schatten ideals $\fS_p=\fS_p(\cH)$, for $0<p \leq \infty$, are operator ideals. Recall that for $p>0$  these operator ideals are defined as
$$
\fS_p:=\{  \,   A \in \cK    \, : \, \|A \|_p:=\Tr(|A|^p)^{1/p} < \infty   \, \}.
$$
For $p=\infty$,  set $\fS_\infty=\cK$ and $\| \, \cdot \, \|_\infty=\| \, \cdot \, \|$ is the usual operator norm. Other examples of operator ideals can be constructed as follows. Let $c_{00}=c_{00}(\N)$ be the real vector space consisting of all sequences with a finite number of nonzero terms.  A symmetric norming function is a norm $\Phi: c_{00} \to \R_{\geq 0}$ satisfying the following properties: $\Phi(1,0,0,\ldots)=1$ and $\Phi(a_1, a_2 , \ldots , a_n, 0, 0 , \ldots)=\Phi(|a_{\sigma(1)}|, |a_{\sigma(2)}| , \ldots , |a_{\sigma(n)}|, 0, 0 , \ldots)$, where  $\sigma$  is any permutation of the integers $1,2,\ldots, n$ and $n\geq 1$. 
Recall that the singular values of an operator $A \in \cK$ are the eigenvalues  of $|A|$. We denote by $\{ s_n(A)\}_{n \geq 1}$ the sequence of the singular values of $A$ arranged in non-increasing order and counting multiplicities. 
 Then, using the singular values of  $A\in\cK$ one can define: 
\[  \| A\|_{\Phi}:= \sup_{k \geq 1} \Phi (s_1(A), s_2(A), \ldots, s_k(A), 0, 0 , \ldots) \in [0,\infty].   \] 
 It turns out that  $\fS_\Phi=\fS_{\Phi}(\cH):= \{  \,   A \in \cK    \, : \, \|A \|_{\Phi} < \infty   \, \}$ are operator ideals, which are usually known as \textit{symmetrically-normed ideals}. Notice that this type of operator ideals are proper. The $p$-Schatten ideals for $1\leq p \leq \infty$ are particular cases of the symmetrically-normed ideals associated with the symmetric norming functions $\Phi_p=\| \, \cdot \, \|_{\ell^p}$. For every symmetric norming function $\Phi$, we have the following estimates involving the operator norm and the ideal norm:
\begin{align}
  \| A \| &\leq \|A\|_\Phi , \, \, \, \, \,\, A \in \fS_\Phi , \label{comp op norm} \\
 \| ABC\|_\Phi &\leq \|A\| \|B\|_\Phi \|C\|, \, \, \, \, \, \, B \in \fS_\Phi, \,  A,C \in \cB(\cH).    \label{estimate sym norm ideal} 
 \end{align}
In particular, the norm $\| \, \cdot \, \|_\Phi$ is sub-multiplicative, i.e. $\|B_1 B_2\|_\Phi \leq \|B_1\|_\Phi \|B_2\|_\Phi$, $B_1, B_2 \in \fS_\Phi$. 
The proofs of these facts and other examples of symmetrically-normed ideals for specific symmetric norming functions $\Phi$ can be found in \cite{GK60}.
 For further examples of operator ideals such as Lorentz ideals, Marcinkiewicz ideals and Orlicz ideals we refer to \cite{Dyk04}.

Given two compact operators $A,\,B\in \cK$ we say that $s(A)=\{s_n(A)\}_{n\geq 1}$ {\it submajorizes} $s(B)=\{s_n(B)\}_{n\geq 1}$, denoted $s(B)\prec_w s(A)$, if for every $n\geq 1$ we have that 
$$
s_1(B) + \ldots + s_n(B) \leq s_1(A) + \ldots + s_n(A)\,.
$$ 
An operator ideal $\cJ$ is called \textit{arithmetic mean closed} if the following property holds: if  $B \in \cB(\cH)$ and $A \in \cJ$ are such that $s(B)\prec_w s(A)$ then $B\in\cJ$.
The symmetrically-normed ideals $\fS_\Phi$ are arithmetic mean closed (see \cite[p.82]{GK60}). Moreover, in this case the norm $\|\cdot\|_\Phi$ is \textit{Schur-convex} in the sense that if $A,\,B\in\cJ$ are such that $s(B)\prec_w s(A)$ then $\|B\|_\Phi\leq \|A\|_\Phi$. We further say $\|\cdot\|_\Phi$ is \textit{strictly Schur-convex} (equivalently,  the symmetric norming function $\Phi$ is strictly Schur-convex) if whenever $A,\,B\in\cJ$ are such that $s(B)\prec_w s(A)$ and $\|B\|_\Phi=\|A\|_\Phi$, then we have $s(A)=s(B)$.
Notice that for $1 \leq p \leq \infty$, $\fS_p$ is arithmetic mean closed; while $\|\cdot\|_p$ is strictly Schur-convex for $1<p<\infty$. Elementary examples of non arithmetic mean closed operator ideals are given by $\cF$ and $\fS_p$ for $0<p<1$.

Denote by $\cG\ell(\cH)$ and $\cU(\cH)$ the invertible group and the unitary group of the Hilbert space $\cH$, respectively. 
Given an operator ideal $\cJ$, we consider the \textit{restricted invertible group}, i.e.
$$
\gli:=\{ G \in \cG\ell(\cH): G- I \in \cJ \},
$$
and the \textit{restricted unitary group}, i.e.
$$
\cU_\cJ:=\{ U \in \cU(\cH): U- I \in \cJ \}.
$$
These types of groups are nontrivial and strictly smaller subgroups  of the invertible group and  the full unitary group whenever $\cJ$ is a proper operator ideal.
They can be studied  from a geometrical 
viewpoint for many operator ideals. For instance, if the operator ideal $\cJ$ admits a complete norm $\| \, \cdot \,\|_\cJ$ stronger than the operator 
norm, then $\gli$ and $\cU_\cJ$ turn out to be complex and real Banach-Lie groups, respectively,  in the topology defined by $d_\cJ(G_1,G_2)=\| G_1 - G_2 \|_\cJ$, for $G_1, G_2 \in \gli$, or $G_1, G_2 \in \cU_\cJ$   (see \cite[Prop. 9.28]{B}).  These conditions are satisfied by the $p$-Schatten ideals $\fS_p$ for $1\leq p \leq \infty$, and more generally, 
by the symmetrically-normed ideals $\fS_\Phi$.

\medskip

\noi \textit{Essential codimension.} Let $P,Q \in \cB(\cH)$ be two (orthogonal) projections such that  the operator $QP|_{R(P)}:R(P)\to R(Q)$ is Fredholm. In this case,  $(P,Q)$ is known as a \textit{Fredholm pair} and the index of this Fredholm operator  
\begin{align}
[P:Q]& :=\mathrm{Ind}(QP|_{R(P)}:R(P)\to R(Q)) \label{index pair}\\
& = \dim(N(Q)\cap R(P)) - \dim(R(Q)\cap N(P)) \nonumber
\end{align}
is called the \textit{essential codimension}. The notion of essential codimension was introduced in \cite{BDF73}, and studied in 
several works (see \cite{AS94, ASS94}). Notice that when $P- Q \in \cK$, then $(P,Q)$ is a Fredholm pair.

\begin{rem}\label{prop ess cod}
We list here some helpful properties 
of the essential codimension.
\begin{itemize}
\item[i)] $[P:Q]=-[Q:P]$.
\item[ii)] Let $(P_i,Q_i)$, $i=1,2$, be two Fredholm pairs.  If  $P_1P_2=0$ and $Q_1Q_2=0$, then $(P_1+ P_2,Q_1+Q_2)$ is a Fredholm pair and
$
[P_1 + P_2:Q_1 + Q_2]=[P_1:Q_1] + [P_2:Q_2].
$ 
\item[iii)] If $(P,Q)$ and $(Q,R)$ are Fredholm pairs, 
and either $Q-R$ or $P-Q$ are compact, then $(P,R)$ is a Fredholm pair and 
$[P:R]=[P:Q] + [Q:R]$.
\end{itemize}
\end{rem} 

\begin{rem}\label{unitary orb partial iso and proj}
We recall some previous results on the action of the $\cJ$-restricted unitary groups using the notion of essential codimension.
\begin{enumerate}
\item[i)] Let $V_1, V_2\in \cP\cI$ and let $\cJ$ be a proper operator ideal.  
Then there exists a unitary operator $U \in \cU_\cJ$ such that $UV_1=V_2$ if and only if $N(V_1)=N(V_2)$ and $V_1-V_2 \in \cJ$ (see \cite{EC10, ECPM1}).
\item[ii)]  Let $V_1, V_2\in \cP\cI$ and let $\cJ$ be a proper operator ideal. The following assertions are equivalent (see \cite{EC10}):
\begin{enumerate}
\item[a)] There exist $U, W \in \cU_\cJ$ such that $UV_1W^*=V_2$;   
\item[b)] $V_1 - V_2 \in \cJ$ and $[V_1 V_1^*: V_2 V_2^*]=0$; 
\item[c)] $V_1 - V_2 \in \cJ$ and $[V_1^* V_1: V_2^* V_2]=0$.
\end{enumerate}
\item[iii)] Let $P$, $Q$ be orthogonal projections and let $\cJ$ be a proper operator ideal.
Then there is a unitary operator $U \in \cU_\cJ$
such that $Q=UPU^*$ if and only if $P-Q \in \cJ$ and $[P:Q] = 0$ (see \cite{KL17}).
\end{enumerate}
\end{rem}

\section{Orbits of the restricted invertible group}\label{sec3}

In this section we study orbits of closed range operators under two actions defined by the group $\gli$.
The first action consists of left multiplication $\gli \times \cC \cR \to \cC \cR$, $G \cdot A=GA$. For an operator $A \in \cC\cR$, 
 we consider the corresponding orbit, i.e. 
$$
\cL\cO_\cJ(A)=\{ GA : G \in \gli \}.
$$
The other action is given by the group $\gli \times \gli$, and it consists of both left and right multiplication. More precisely, it is defined by the map $(\gli \times \gli )\times \cC \cR \to \cC\cR$,  $(G,K)\cdot A=GAK^{-1}$, where $A \in \cC\cR$ and $G,K \in \gli$. For any $A \in \cC\cR$, the orbit given by this action is
$$
\cO_\cJ(A)=\{ GAK^{-1} : G, K \in \gli \}.
$$ 

\subsection{Spatial characterizations}\label{spatial ch both orb}

In what follows we will present several characterizations of the two orbits defined above.

\begin{lem}\label{ab y moore penrose}
Let $\cJ$ be an operator ideal. Let $A,B \in \cC\cR$ be such that $A-B \in \cJ$. Then $A^\dagger - B^\dagger \in \cJ$,    $P_{R(A)}-P_{R(B)} \in \cJ$ and $P_{N(A)}-P_{N(B)} \in \cJ$. \qed
\end{lem}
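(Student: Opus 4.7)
The plan is to reduce everything to an application of Stewart's identity \eqref{stewart identity} and the ideal property (that $\cJ$ is closed under left and right multiplication by bounded operators, and under taking adjoints).

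First I would prove that $A^\dagger - B^\dagger \in \cJ$. Write out the Stewart identity
\[
A^\dagger - B^\dagger=-A^\dagger(A-B)B^\dagger + A^\dagger (A^* )^\dagger (A^* - B^*)(I - BB^\dagger) + (I-A^\dagger A)(A^* - B^*) (B^*)^\dagger  B.
\]
Since $A-B \in \cJ$ and ideals are $*$-closed, also $A^* - B^* \in \cJ$. The Moore--Penrose inverses $A^\dagger, B^\dagger, (A^*)^\dagger, (B^*)^\dagger$ are bounded (because $A,B \in \cC\cR$), and so are $I - BB^\dagger$ and $I - A^\dagger A$. Every term on the right-hand side is therefore a product of bounded operators with a factor in $\cJ$ in the middle, hence belongs to $\cJ$. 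This yields $A^\dagger - B^\dagger \in \cJ$.

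Next, for the projections onto the ranges, I would use the standard identities $P_{R(A)} = AA^\dagger$ and $P_{R(B)} = BB^\dagger$, and then split the difference as
\[
AA^\dagger - BB^\dagger = (A-B)A^\dagger + B(A^\dagger - B^\dagger).
\]
Both summands belong to $\cJ$: the first because $A-B \in \cJ$ is multiplied on the right by a bounded operator, the second because $A^\dagger - B^\dagger \in \cJ$ by the previous step. Hence $P_{R(A)} - P_{R(B)} \in \cJ$.

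Finally, for the projections onto the nullspaces, I would use $P_{N(A)} = I - A^\dagger A$ and $P_{N(B)} = I - B^\dagger B$, so that $P_{N(A)} - P_{N(B)} = -(A^\dagger A - B^\dagger B)$, and then decompose
\[
A^\dagger A - B^\dagger B = A^\dagger (A-B) + (A^\dagger - B^\dagger) B,
\]
which lies in $\cJ$ by the same argument. The only nontrivial ingredient is Stewart's identity itself, which is quoted in the paper, so there is no genuine obstacle; the routine point to watch is merely that closed range ensures boundedness of all the pseudo-inverses that appear.
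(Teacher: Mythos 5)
Your proof is correct and follows the same approach as the paper: apply Stewart's identity to get $A^\dagger - B^\dagger \in \cJ$, then express the differences of projections via $AA^\dagger$ and $A^\dagger A$ and decompose so that each summand has a factor in $\cJ$. The particular telescoping decompositions you chose (e.g.\ $(A-B)A^\dagger + B(A^\dagger - B^\dagger)$ versus the paper's $A(A^\dagger - B^\dagger) - (B-A)B^\dagger$) are trivially interchangeable variants of the same idea.
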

\begin{proof}
Note that $A^\dagger - B^\dagger \in \cJ$ follows immediately from  Eq. \eqref{stewart identity}, because we know that $A- B \in \cJ$ and $A^*-B^* \in \cJ$. For the other statement, observe that 
one can write $P_{R(A)}-P_{R(B)}=AA^\dagger - BB^\dagger=A(A^\dagger - B^\dagger) -  (B -A)B^\dagger \in \cJ$.   Similarly, we have $P_{N(A)}-P_{N(B)} \in \cJ$, since $I-P_{N(A)}=A^\dagger A$ and
$I-P_{N(B)}=B^\dagger B$.
\end{proof}

Corach, Maestripieri and Mbekhta established a relation between orbits of closed range operators and unitary orbits of the (reverse) partial isometries of the polar decompositions of closed range operators (see \cite[Prop. 5.3]{CMM}). This motivates our following results on the relation of orbits of closed range operators and unitary orbits of their partial isometries in the polar decomposition, in our setting of restricted orbits. 
Next, we recall a result by van Hemmen and Ando, and we then apply it to give characterizations of the orbits of a closed range operator. 
\begin{pro}[\cite{AvH}]\label{Ando}
If $A,B \geq 0$ and $A^{1/2} + B^{1/2} \geq \mu I \geq 0$, then for any symmetric norming function $\Phi$
$$
\|A-B\|_\Phi \geq \mu \| A^{1/2} - B^{1/2}\|_\Phi \, .
$$
\end{pro}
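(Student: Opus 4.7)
The plan is to express $X := A^{1/2} - B^{1/2}$ as the unique solution of a Sylvester (Lyapunov) equation driven by $A-B$, and then estimate $\|X\|_\Phi$ from above using the semigroup generated by $S := A^{1/2} + B^{1/2}$. First I would verify the Jordan-product identity
$$SX + XS = (A^{1/2}+B^{1/2})(A^{1/2}-B^{1/2}) + (A^{1/2}-B^{1/2})(A^{1/2}+B^{1/2}) = 2(A-B)$$
by direct expansion (the cross terms $A^{1/2}B^{1/2}$ and $B^{1/2}A^{1/2}$ cancel). The case $\mu = 0$ is trivial, so from here on I assume $\mu > 0$, whence $S \geq \mu I$ is invertible and the self-adjoint semigroup $\{e^{-tS}\}_{t\geq 0}$ satisfies $\|e^{-tS}\| \leq e^{-t\mu}$ for all $t\geq 0$.

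Next I would differentiate the operator-valued function $t \mapsto e^{-tS} X e^{-tS}$, using that $S$ commutes with $e^{-tS}$:
$$\frac{d}{dt}\bigl(e^{-tS} X e^{-tS}\bigr) = -e^{-tS}(SX + XS)e^{-tS} = -2\,e^{-tS}(A-B)e^{-tS}.$$
Integrating on $[0,\infty)$, and using the decay $\|e^{-tS} X e^{-tS}\| \leq e^{-2t\mu}\|X\| \to 0$ to kill the boundary term at infinity, I obtain the Lyapunov-type representation
$$X = 2\int_0^\infty e^{-tS}(A-B)\,e^{-tS}\, dt.$$

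If $A-B \notin \fS_\Phi$ the inequality is trivial, so suppose $A-B \in \fS_\Phi$. The ideal-norm estimate \eqref{estimate sym norm ideal} then gives
$$\bigl\|e^{-tS}(A-B)e^{-tS}\bigr\|_\Phi \leq \|e^{-tS}\|^2\,\|A-B\|_\Phi \leq e^{-2t\mu}\,\|A-B\|_\Phi,$$
so the integrand is Bochner integrable in the Banach ideal $(\fS_\Phi,\|\cdot\|_\Phi)$; its $\|\cdot\|_\Phi$-continuity also follows from \eqref{estimate sym norm ideal} applied to differences $e^{-tS}-e^{-sS}$, which are small in operator norm by the norm-continuity of the semigroup. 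Taking $\Phi$-norms in the representation of $X$,
$$\|X\|_\Phi \leq 2\int_0^\infty e^{-2t\mu}\,\|A-B\|_\Phi\, dt = \tfrac{1}{\mu}\|A-B\|_\Phi,$$
which is precisely the desired estimate.

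The main technical hurdle is justifying the integral representation of $X$ and interchanging the $\Phi$-norm with the integral in the infinite-dimensional setting; both rest on the exponential decay $\|e^{-tS}\|\leq e^{-t\mu}$ furnished by the hypothesis $S \geq \mu I$ together with the continuity of left and right multiplication on $\fS_\Phi$ provided by \eqref{estimate sym norm ideal}. Once these two ingredients are in place the argument is essentially a one-line consequence of the Jordan identity.
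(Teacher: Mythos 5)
The paper does not prove this proposition; it only cites it to Ando--van Hemmen \cite{AvH}, so there is no in-paper argument to compare against. Your blind proof is correct. The Jordan identity $SX+XS=2(A-B)$ with $S=A^{1/2}+B^{1/2}$, $X=A^{1/2}-B^{1/2}$ is verified correctly; the case $\mu=0$ is indeed vacuous; for $\mu>0$ the bound $\|e^{-tS}\|\le e^{-t\mu}$ and the Lyapunov representation
$$X \;=\; 2\int_0^\infty e^{-tS}(A-B)\,e^{-tS}\,dt$$
(obtained by differentiating $e^{-tS}Xe^{-tS}$, integrating, and killing the boundary term at infinity) are right; and the passage to $\|\cdot\|_\Phi$ via \eqref{estimate sym norm ideal}, Bochner integrability in $(\fS_\Phi,\|\cdot\|_\Phi)$, and the triangle inequality for the Bochner integral gives $\|X\|_\Phi\le \tfrac1\mu\|A-B\|_\Phi$, which is exactly the claimed inequality. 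You also correctly dispose of the case $A-B\notin\fS_\Phi$ (where the right-hand side is $+\infty$; implicitly $A-B$ is assumed compact, as it is in the paper's only use of the proposition). The one small point you glossed is that the Bochner integral taken in $\fS_\Phi$ must coincide with the operator-norm integral defining $X$; this follows at once from \eqref{comp op norm}, since $\|\cdot\|\le\|\cdot\|_\Phi$ forces the two limits to agree. This Lyapunov-equation/semigroup argument is, to my knowledge, essentially the original Ando--van Hemmen approach, so there is no divergence in method to report.
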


\begin{lem}\label{lem sobre raices en ideales}
Let $\cJ$ be an arithmetic mean closed operator ideal and let $D,\,E\in\cC\cR$ be positive operators such that $N(D)=N(E)$ (or equivalently $R(D)=R(E)$) and $D^2 -E^2\in\cJ$. Then $D-E\in\cJ$. 
\end{lem}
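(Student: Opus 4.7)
The plan is to reduce to a situation where Proposition \ref{Ando} applies directly. The obstruction is that the hypothesis $D^{1/2\cdot 2}+E^{1/2\cdot 2}\geq \mu I$ (thinking of $A=D^2$, $B=E^2$) fails on the common nullspace $N(D)=N(E)$. The natural fix is a shift: let $P:=P_{N(D)}=P_{N(E)}$ and set
$$
D':=D+P,\qquad E':=E+P.
$$
Since $D$ and $E$ are positive, they annihilate their common nullspace, so $DP=PD=0$ and $EP=PE=0$. A direct computation then gives $(D')^2=D^2+P$ and $(E')^2=E^2+P$, so crucially
$$
(D')^2-(E')^2=D^2-E^2\in\cJ .
$$
Moreover, since $D,E\in\cC\cR$ we have $D\geq \gamma(D)\,P_{N(D)^\perp}$ and $E\geq \gamma(E)\,P_{N(D)^\perp}$. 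Setting $\mu:=\min\{\gamma(D)+\gamma(E),\,2\}>0$, this gives $D'+E'\geq \mu I$.

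Now I apply Proposition \ref{Ando} with $A=(D')^2$, $B=(E')^2$ and, for $\Phi$, the Ky Fan symmetric norming functions $\Phi_k(a_1,a_2,\ldots)=\sum_{i=1}^{k}a^\downarrow_i$. This yields, for every $k\geq 1$,
$$
\mu\sum_{i=1}^{k} s_i(D'-E')\ \leq\ \sum_{i=1}^{k} s_i\bigl((D')^2-(E')^2\bigr)\ =\ \sum_{i=1}^{k} s_i(D^2-E^2),
$$
which is exactly the submajorization $s(D'-E')\prec_w s\bigl(\mu^{-1}(D^2-E^2)\bigr)$. Since $\mu^{-1}(D^2-E^2)\in\cJ$ and $\cJ$ is arithmetic mean closed, it follows that $D'-E'\in\cJ$. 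But $D'-E'=D-E$, and we are done.

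The only delicate point is the identity $(D')^2-(E')^2=D^2-E^2$, which depends critically on the hypothesis $N(D)=N(E)$: only then does a single shift by $P$ work for both operators and leave the difference of the squares unchanged. Apart from that, the argument is a direct packaging of Proposition \ref{Ando} (specialized to Ky Fan norms to convert a norm inequality into weak submajorization) with the definition of an arithmetic mean closed ideal.
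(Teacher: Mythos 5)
Your proof is correct, and it takes a genuinely different (and arguably cleaner) route than the paper's. The paper's proof passes to the subspace $\cH_0 := R(D)=R(E)$, forms the compressed ideal $\cJ_0 := \{P_{\cH_0}T|_{\cH_0} : T\in\cJ\}$, applies Proposition~\ref{Ando} there, and then lifts the conclusion back to $\cJ$ on $\cH$; this requires verifying that $\cJ_0$ is itself an arithmetic mean closed ideal on $\cB(\cH_0)$ and that containment in $\cJ_0$ pulls back to containment in $\cJ$. Your shift $D' = D+P$, $E' = E+P$ with $P=P_{N(D)}=P_{N(E)}$ sidesteps all of this: because $D$ and $E$ both vanish on $R(P)$ and commute with $P$, you get $(D')^2-(E')^2 = D^2-E^2$ and $D'-E'=D-E$ exactly, while $D'+E' \geq \mu I$ globally with $\mu = \min\{\gamma(D)+\gamma(E),2\}>0$. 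This lets you invoke Proposition~\ref{Ando} directly on $\cH$ with the Ky Fan norming functions $\Phi_k$, obtain the submajorization $s(D-E)\prec_w s\bigl(\mu^{-1}(D^2-E^2)\bigr)$, and conclude from arithmetic mean closedness. The only implicit detail (common to both proofs) is that the submajorization forces $D-E$ to be compact before the ideal membership makes sense; this follows from $s_n(D^2-E^2)\to 0$ via Ces\`aro. The degenerate case $D=0$ (where $\gamma(D)$ is undefined) forces $E=0$ by the nullspace hypothesis and is trivial, as you can note in one line.
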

\begin{proof}
Notice that $R(D^2 )=N(D^2)^\perp=N(D)^\perp$ and that $D^2 \in \cC\cR$. Similarly,
$R(E^2 )=N(E^2)^\perp=N(E)^\perp$ and that $E^2 \in \cC\cR$. In particular, $R(D)=R(E)$.
 We are going to apply Proposition \ref{Ando} in the Hilbert space 
$\cH_0:=R(D)=R(E)$ with the operators $A:=E^2|_{\cH_0}\in \cB(\cH_0)$ and $B:=E^2|_{\cH_0}\in \cB(\cH_0)$, which by the remarks in the previous paragraph are invertible on $\cH_0$. We thus get $A^{1/2} + B^{1/2} \geq \mu I_{\cH_0}$ for some $\mu>0$. Consider the operator ideal on $\cH_0$ defined by $\cJ_0:=\{ P_{\cH_0}T|_{\cH_0} : T \in \cJ \}$, which is also arithmetic mean closed. Notice that by construction
$A-B\in\cJ_0$.   Taking the symmetric norming functions $\Phi_n(x_1, \ldots , x_n ,x_{n+1}, \ldots)=|x_1| +  \ldots + |x_n|$, $n \geq 1$, then by Proposition \ref{Ando} we find that 
$s_1(A-B) +  \ldots + s_n(A-B) \geq \mu \ (s_1(A^{1/2}-B^{1/2}) +  \ldots + s_n(A^{1/2}-B^{1/2}))$, for all $n \geq 1$; that is, $s(\mu (A^{1/2}-B^{1/2}))\prec_w s(A-B)$. This implies that $A^{1/2}-B^{1/2}\in \cJ_0$ because $\cJ_0$ is arithmetic mean closed. Using again that $\cH_0=R(D)=R(E)$,  we obtain $D - E \in \cJ$.  
\end{proof}


\begin{pro}\label{polar dec y a menos b}
Let $A,B \in\cC\cR$ satisfying $[P_{N(A)}:P_{N(B)}]=0$ and let $\cJ$ be an arithmetic mean closed operator ideal. The following conditions are equivalent:
\begin{itemize}
\item[i)] $A-B \in \cJ$;
\item[ii)] $|A|-|B| \in \cJ$ and $V_A - V_B \in \cJ$. \qed
\end{itemize} 
\end{pro}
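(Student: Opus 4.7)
For (ii) $\Rightarrow$ (i), the plan is to use the factorization
\[
A - B = V_A(|A|-|B|) + (V_A - V_B)|B|,
\]
and invoke the two-sided ideal property of $\cJ$; this direction requires neither the essential-codimension hypothesis nor that $\cJ$ be arithmetic mean closed.

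For (i) $\Rightarrow$ (ii), the strategy is to reduce to a situation in which Lemma \ref{lem sobre raices en ideales} applies, i.e.\ one where the positive operators being compared have the same nullspace. Starting from $A-B\in\cJ$, I would first invoke Lemma \ref{ab y moore penrose} to get $P_{N(A)}-P_{N(B)}\in\cJ$; combined with the hypothesis $[P_{N(A)}:P_{N(B)}]=0$, Remark \ref{unitary orb partial iso and proj}(iii) then furnishes a unitary $U\in\cU_\cJ$ with $U P_{N(A)}U^*=P_{N(B)}$. Setting $\tilde B:=BU$, a short computation using $P_{N(A)}=U^*P_{N(B)}U$ shows $N(\tilde B)=N(A)$, while $\tilde B-A=B(U-I)+(B-A)\in\cJ$ gives
\[
|\tilde B|^2-|A|^2 = \tilde B^*(\tilde B-A)+(\tilde B^*-A^*)A \in \cJ.
\]
Since $N(|\tilde B|)=N(\tilde B)=N(A)=N(|A|)$ and $\cJ$ is arithmetic mean closed, Lemma \ref{lem sobre raices en ideales} would then yield $|\tilde B|-|A|\in\cJ$.

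To transfer this back to $|B|$, I would use uniqueness of the positive square root to write $|\tilde B|=U^*|B|U$, and expand
\[
|B|-|A|=U(|\tilde B|-|A|)U^* + (U-I)|A|U^* + |A|(U^*-I),
\]
so that every summand lies in $\cJ$. Finally, for $V_A-V_B$, I would exploit the identities $V_A=A|A|^\dagger$ and $V_B=B|B|^\dagger$ (valid because $|A|,|B|\in\cC\cR$ with $R(|A|)^\perp=N(A)=N(V_A)$ and similarly for $B$), which give
\[
V_A-V_B = A(|A|^\dagger-|B|^\dagger) + (A-B)|B|^\dagger;
\]
a second application of Lemma \ref{ab y moore penrose} to $|A|,|B|\in\cC\cR$ shows $|A|^\dagger-|B|^\dagger\in\cJ$, whence both terms sit in $\cJ$.

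The main obstacle I anticipate is precisely the bridging step: one cannot apply Lemma \ref{lem sobre raices en ideales} to $|A|^2-|B|^2$ directly, because $N(|A|)$ and $N(|B|)$ need not agree. The essential-codimension hypothesis is exactly what permits a unitary conjugation by an element of $\cU_\cJ$ synchronising these nullspaces while remaining inside $\cJ$; without it the reduction would break down.
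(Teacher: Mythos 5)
Your proof is correct and follows essentially the same strategy as the paper's: both directions hinge on the same decompositions, and the key bridging step in (i) $\Rightarrow$ (ii) is identical — use Lemma \ref{ab y moore penrose} plus the essential-codimension hypothesis and Remark \ref{unitary orb partial iso and proj}(iii) to conjugate by some $U\in\cU_\cJ$ so the nullspaces agree, then invoke Lemma \ref{lem sobre raices en ideales}. The only (cosmetic) differences are that you replace $B$ by $\tilde B = BU$ rather than replacing $A$ by $AU^*$, and that once $|A|-|B|\in\cJ$ is in hand you get $V_A-V_B\in\cJ$ directly from $V_A - V_B = A(|A|^\dagger-|B|^\dagger) + (A-B)|B|^\dagger$, which is a bit more streamlined than the paper's two-step passage through the intermediate partial isometry $V_C$.
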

\begin{proof}
$i) \to ii)$. If $A-B \in \cJ$, then $A^*-B^* \in \cJ$, 
so Lemma \ref{ab y moore penrose} shows that $P_{R(A^*)}-P_{R(B^*)}\in\cJ$. Thus,
$P_{N(A)}-P_{N(B)}=P_{R(B^*)}-P_{R(A^*)}\in\cJ$. Since $[P_{N(A)}:P_{N(B)}]=0$ then, by item $iii)$ in Remark  \ref{unitary orb partial iso and proj}, we see that there exists $U\in\cU_\cJ$ such that $N(B)=U\,N(A)$, and in this case, we get that $N(AU^*)=N(B)$. Notice that 
$AU^*-B=A(U-I)^*+ A-B \in \cJ$. The previous remarks show that if we let $C=AU^*$, then $C\in\cC\cR$, $N(|C|)=N(C)=N(B)=N(|B|)$ and $C-B\in\cJ$ so that $|C|^2 -|B|^2 =C^*(C-B)+(C^*-B^*)B\in\cJ$. By Lemma \ref{lem sobre raices en ideales} we conclude that $|C|-|B|\in\cJ$. Now we can use Lemma \ref{ab y moore penrose} to deduce that $V_C-V_B=(|C|^\dagger - |B|^\dagger)C +  |B|^\dagger(C-B) \in \cJ$.
Since 
$|A|=U^*|C|U$, then $|A|-|C|\in\cJ$. On the other hand, the identity $C=AU^*=(V_AU^*U|A|)U^*=V_AU^* |C|$ together with $R(|C|)=UR(|A|)=N(V_AU^*)^\perp$ imply that $V_C=V_AU^*$ by the characterization of the partial isometry and positive operator in the polar decomposition. Hence $V_A-V_C\in\cJ$. These last identities show that $|A|-|B|=(|A|-|C|)+(|C|-|B|)\in\cJ$ and $V_A-V_B=(V_A-V_C)+(V_C-V_B)\in \cJ$.

\medskip

\noi $ii) \to i)$. This follows from the fact that $A-B=(V_A-V_B)|A|+V_B(|A|-|B|)$. 
\end{proof}

We first consider the orbit of a closed range operator $A$ given by the left by multiplication of operators in  $\cG\ell_\cJ$.

\begin{teo}\label{para frames J equiv} 
Let $A,B \in\cC\cR$ and $\cJ$ be a proper operator ideal. Then the following conditions are equivalent:
\begin{enumerate}
\item[i)] There exists $G \in \gli$ such that $GA=B$; 
\item[ii)] $A-B \in \cJ$ and $N(A)=N(B)$.
\end{enumerate}
If any of the conditions above hold, then $[P_{R(A)}:P_{R(B)}]=0$. 

Moreover, if we assume further that $\cJ$ is arithmetic mean closed, then the conditions above are also equivalent to
\begin{enumerate}
\item[iii)] $|A|-|B|\in\cJ$, $V_A-V_B\in\cJ$ and $V_A^*V_A=V_B^*V_B$, where $A=V_A|A|$ and $B=V_B |B|$ are the polar decompositions of $A$ and $B$, respectively.
\end{enumerate}
\end{teo}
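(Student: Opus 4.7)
The plan is to handle the four assertions in sequence: $(i)\Rightarrow(ii)$, the index consequence, $(ii)\Rightarrow(i)$, and finally $(ii)\Leftrightarrow(iii)$ under the arithmetic mean closed hypothesis.

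The direction $(i)\Rightarrow(ii)$ is immediate: writing $G=I+K$ with $K\in\cJ$, one has $B-A=KA\in\cJ$ by the two-sided ideal property, and $N(B)=N(GA)=N(A)$ since $G$ is invertible. For the index consequence $[P_{R(A)}:P_{R(B)}]=0$, I would show that the Fredholm operator $P_{R(B)}|_{R(A)}\colon R(A)\to R(B)$ has index $0$ by comparing it with $BA^\dagger|_{R(A)}$. Indeed, since $N(A)=N(B)$, the map $BA^\dagger|_{R(A)}$ factors as $R(A)\xrightarrow{A^\dagger}N(A)^\perp=N(B)^\perp\xrightarrow{B}R(B)$, a composition of Hilbert space isomorphisms; while
\begin{equation*}
P_{R(B)}|_{R(A)}-BA^\dagger|_{R(A)}=B\bigl(B^\dagger-A^\dagger\bigr)\big|_{R(A)}\in\cJ\subset\cK
\end{equation*}
by Lemma \ref{ab y moore penrose}. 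Hence both operators have Fredholm index $0$.

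The heart of the proof is $(ii)\Rightarrow(i)$. The naive candidate $G=I+(B-A)A^\dagger$ satisfies $GA=B$ and $G-I\in\cJ$ but may fail to be invertible when $R(A)\neq R(B)$, since its range is $R(B)+R(A)^\perp$. The fix uses the index consequence together with item iii) of Remark \ref{unitary orb partial iso and proj}: from $P_{R(A)}-P_{R(B)}\in\cJ$ (Lemma \ref{ab y moore penrose}) and $[P_{R(A)}:P_{R(B)}]=0$, extract $U\in\cU_\cJ$ with $U\,R(A)=R(B)$. Setting $C:=U^*B$, one gets $R(C)=R(A)$, $N(C)=N(A)$, and $C-A=(U^*-I)B+(B-A)\in\cJ$. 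Then
\begin{equation*}
G_0:=CA^\dagger+P_{R(A)^\perp}
\end{equation*}
satisfies $G_0-I=(C-A)A^\dagger\in\cJ$, acts as the identity on $R(A)^\perp$, and maps $R(A)$ bijectively onto itself via the composition of isomorphisms $A^\dagger\colon R(A)\xrightarrow{\sim}N(A)^\perp$ and $C\colon N(A)^\perp\xrightarrow{\sim}R(A)$. Therefore $G_0\in\gli$, and $G:=UG_0\in\gli$ yields $GA=UC=B$.

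For $(ii)\Leftrightarrow(iii)$ under the arithmetic mean closed hypothesis, I would reduce both directions to Proposition \ref{polar dec y a menos b}. The condition $V_A^*V_A=V_B^*V_B$ is equivalent to $P_{N(A)^\perp}=P_{N(B)^\perp}$, i.e.\ to $N(A)=N(B)$; in either form $[P_{N(A)}:P_{N(B)}]=0$ trivially, so Proposition \ref{polar dec y a menos b} applies in both directions. The main obstacle is the construction in $(ii)\Rightarrow(i)$: the correction by a restricted unitary $U$ aligning the ranges is unavoidable, since the direct Moore--Penrose recipe cannot yield invertibility when $R(A)\neq R(B)$, and this is precisely where the vanishing of the essential codimension $[P_{R(A)}:P_{R(B)}]$ must be invoked through Remark \ref{unitary orb partial iso and proj}.
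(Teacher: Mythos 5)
Your proof is correct, and the reduction of $(ii)\Leftrightarrow(iii)$ to Proposition~\ref{polar dec y a menos b} matches the paper exactly, but the construction in $(ii)\Rightarrow(i)$ takes a genuinely different route. The paper works directly with the polar decomposition of $D=BA^\dagger$: writing $D=U_D|D|$, it shows $|D|-P_{R(A)}\in\cJ$ and $U_D-P_{R(A)}\in\cJ$, extends the partial isometry $U_D$ to a restricted unitary $Z$ via Remark~\ref{unitary orb partial iso and proj}~i) (the partial-isometry criterion), and sets $G=Z\bigl(|D|+P_{R(A)^\perp}\bigr)$; the identity $[P_{R(A)}:P_{R(B)}]=0$ then drops out as a corollary of $ZP_{R(A)}Z^*=P_{R(B)}$. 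You invert this logic: you first compute the essential codimension directly by comparing $P_{R(B)}|_{R(A)}$ with the isomorphism $BA^\dagger|_{R(A)}$ modulo compacts, feed $[P_{R(A)}:P_{R(B)}]=0$ and $P_{R(A)}-P_{R(B)}\in\cJ$ into Remark~\ref{unitary orb partial iso and proj}~iii) (the projection criterion) to produce a restricted unitary $U$ aligning $R(A)$ with $R(B)$, and then establish invertibility of $G_0=U^*BA^\dagger+P_{R(A)^\perp}$ by a clean block-diagonal argument on $R(A)\oplus R(A)^\perp$. Both constructions are valid; the paper's avoids a separate Fredholm-index computation but requires the polar-decomposition bookkeeping for $D$, while yours makes the role of the essential codimension more explicit at the cost of an extra preliminary step. (One small organizational remark: your index computation uses only the hypotheses of $(ii)$, so there is no circularity in invoking it inside the proof of $(ii)\Rightarrow(i)$.)
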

\begin{proof}
\noi $i) \to ii)$. Assume that $B=GA$, where $G \in \gli$. Then it is clear that $N(A)=N(B)$. On the other hand, 
$$
A-B = A-GA=(I-G)A\in\cJ,
$$ 
since $I-G\in\cJ$ by assumption.

\medskip

\noi $ii) \to i)$. Assume that  $A,\,B\in\cC\cR$ are such that $N(A)=N(B)$ and $A-B\in\cJ$. 
Notice that  $P_{R(A^*)}$ and $P_{R(B^*)}$ coincide with the orthogonal projection onto $N(A)^\perp=N(B)^\perp$. Hence, if we let 
$D=BA^\dagger$, then $DA=BA^\dagger A=B P_{N(A)^\perp}=B$. In this case, 
$$
D-P_{R(A)}=BA^\dagger-AA^\dagger=(B-A)A^\dagger\in\cJ\,.
$$
If we let $K\in \cJ$ be such that 
$D=P_{R(A)}+K$, then
$$
D^*D=(P_{R(A)}+K)^*(P_{R(A)}+K)=P_{R(A)}+K^*P_{R(A)}+P_{R(A)}K+K^*K,
$$
which shows that $|D|^2-P_{R(A)}\in\cJ$. Since $A, B \in \cC\cR$ and $N(A)=N(B)$, the operator $D|_{R(A)}:R(A) \to R(B)$ is invertible, and consequently, the operator $|D||_{R(A)}:R(A)\rightarrow R(A)$ is also invertible. Furthermore, $R(D)=R(B)$ and $R(A)=R(D^*)$.
In particular, we see that
$$
|D|-P_{R(A)}=(|D|+P_{R(A)})^\dagger(|D|^2-P_{R(A)})\in\cJ, 
$$
which implies $|D|^\dagger-P_{R(A)}\in\cJ$ by Lemma \ref{ab y moore penrose}.
Consider the polar decomposition $D=U\,|D|$, where $U$ is a partial isometry with initial projection $P_{R(A)}$ and final projection $P_{R(B)}$. In this case,
$$
U-P_{R(A)}=(D-P_{R(A)})|D|^\dagger+(|D|^\dagger-P_{R(A)})\in\cJ\,.
$$
Since $N(U)=N(D)=N(P_{R(A)})$, then
by Remark \ref{unitary orb partial iso and proj} $i)$ 
we conclude that there exists $Z\in\cU_\cJ$ such that 
$Z P_{R(A)}=U$. In particular, we get $ZP_{R(A)}Z^*=UU^*=P_{R(B)}$, so that $[\,P_{R(A)}\,:\,P_{R(B)}\,]=0$ by Remark item \ref{unitary orb partial iso and proj} $iii)$. 

We now set $Y=|D|+(I-P_{R(A)})$ and notice that $Y\in\cG\ell_\cJ$. Indeed,
$Y$ is invertible (recall that $N(|D|)=R(A)^\perp$ and $|D||_{R(A)}:R(A)\rightarrow R(A)$ is an invertible operator) and
$I-Y=|D|-P_{R(A)}\in\cJ$. Finally, notice that 
$
ZYA=Z|D|A=ZP_{R(A)}|D|A=U|D|A=DA=B$ and $ZY \in\cG\ell_\cJ$.

 Assume now that $\cJ$ is an arithmetic mean closed ideal. Notice that the equivalence between items $ii)$ and $iii)$ follows from Proposition \ref{polar dec y a menos b} and the fact that $V_A^*V_A=I-P_{N(A)}$ and similarly for $B$.
\end{proof}

The following is a direct consequence of Theorem \ref{para frames J equiv}.
\begin{cor}\label{coro caract orb1}
Let $A\in\cC\cR$ and $\cJ$ be a proper operator ideal. Then, 
$$
\cL\cO_\cJ(A)=\{B\in\cC\cR\ : \ A-B\in\cJ \text{ and } N(A)=N(B) \}\,.
$$
Moreover, if we assume further that $\cJ$ is arithmetic mean closed, then
$$
\cL\cO_\cJ(A)=\{B\in\cC\cR\ : \ |A|-|B|\in\cJ \coma V_A-V_B\in\cJ \,\text{ and } \,V_A^*V_A=V_B^*V_B\}\,.
$$
\end{cor}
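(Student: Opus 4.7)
The corollary is essentially a restatement of Theorem \ref{para frames J equiv} in set-theoretic form, so the proof amounts to unfolding the definition of $\cL\cO_\cJ(A)$ and invoking the equivalences already established.

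For the first equality, my plan is to argue by double inclusion. If $B \in \cL\cO_\cJ(A)$, then by definition $B = GA$ for some $G \in \gli$; in particular, $B \in \cC\cR$ because $G$ is invertible (so $R(B) = G(R(A))$ remains closed), and Theorem \ref{para frames J equiv} $i) \Rightarrow ii)$ gives $A-B \in \cJ$ and $N(A)=N(B)$. Conversely, if $B \in \cC\cR$ satisfies $A - B \in \cJ$ and $N(A) = N(B)$, then Theorem \ref{para frames J equiv} $ii) \Rightarrow i)$ produces $G \in \gli$ with $GA = B$, so $B \in \cL\cO_\cJ(A)$.

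For the second equality, assume $\cJ$ is arithmetic mean closed. The key observation is that for any closed range operator $T$ with polar decomposition $T = V_T |T|$, one has $V_T^* V_T = P_{N(T)^\perp} = I - P_{N(T)}$. Therefore the condition $V_A^* V_A = V_B^* V_B$ is equivalent to $N(A) = N(B)$. With this rewriting, condition $iii)$ of Theorem \ref{para frames J equiv} becomes precisely the membership criterion on the right-hand side, and the equivalence $i) \Leftrightarrow iii)$ in that theorem delivers the claim.

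There is no real obstacle; the only thing worth being careful about is that the set on the right-hand side of the first equality is stated as a subset of $\cC\cR$, so one must verify that $B \in \cC\cR$ automatically when $B = GA$ with $G$ invertible, which is immediate. The whole corollary can be written as a two- or three-line proof citing Theorem \ref{para frames J equiv}.
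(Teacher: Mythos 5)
Your proof is correct and matches the paper's approach: the paper simply states that the corollary is a direct consequence of Theorem \ref{para frames J equiv}, and your double-inclusion argument, the observation that $B=GA$ with $G$ invertible keeps $B$ in $\cC\cR$, and the identification $V_A^*V_A = P_{N(A)^\perp}$ are exactly the (routine) details being elided there.
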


Now we can derive a characterization of the orbit defined by multiplication on both sides.

\begin{teo}\label{bi-orbit charactalt}
Let $A,B \in\cC\cR$ and $\cJ$ be a proper operator ideal. The following conditions are equivalent:
\begin{enumerate}
\item[i)] $B=GAK^{-1}$, for some $G,K \in \gli$;
\item[ii)] $A-B \in \cJ$ and $[P_{N(A)}:P_{N(B)}]=0$;
\item[iii)] $A-B \in \cJ$ and $P_{N(B)}=UP_{N(A)}U^*$, for some $U\in \cU_\cJ$.
\end{enumerate}
Moreover, if we assume further that $\cJ$ is arithmetic mean closed, then the conditions above are also equivalent to
\begin{enumerate}
\item[iv)] $|A|-|B|\in\cJ$, $V_A-V_B\in\cJ$ and $[V_A^*V_A: V_B^*V_B]=0$.
\end{enumerate}
\end{teo}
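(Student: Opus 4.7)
The plan is to establish the cycle (i) $\Rightarrow$ (ii) $\Rightarrow$ (iii) $\Rightarrow$ (i) under the general hypothesis, and then derive the additional equivalence (ii) $\Leftrightarrow$ (iv) in the arithmetic mean closed case by combining Proposition \ref{polar dec y a menos b} with a sign identity for essential codimensions.

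For (i) $\Rightarrow$ (ii), suppose $B=GAK^{-1}$ with $G,K\in\gli$. The decomposition $A-B=(I-G)A+GA(I-K^{-1})$ together with $I-K^{-1}=K^{-1}(K-I)\in\cJ$ shows $A-B\in\cJ$. Since $G$ is injective, $N(B)=KN(A)$. Setting $P=P_{N(A)}$ and $Q=P_{N(B)}$, Lemma \ref{ab y moore penrose} gives $P-Q\in\cJ\subseteq\cK$, so $(P,Q)$ is a Fredholm pair. To pin down the index, I consider the auxiliary compression $QKP|_{R(P)}\colon R(P)\to R(Q)$. For $\xi\in N(A)$ we have $K\xi\in KN(A)=N(B)$, so $QKP\xi=K\xi$; thus this compression is just $K|_{N(A)}\colon N(A)\to N(B)$, which is invertible, hence of Fredholm index $0$. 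On the other hand $QKP=QP+Q(K-I)P$ and $Q(K-I)P\in\cJ\subseteq\cK$, so $QKP|_{R(P)}$ and $QP|_{R(P)}$ differ by a compact operator and share the same index. Consequently $[P:Q]=\mathrm{Ind}(QP|_{R(P)})=0$.

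The equivalence (ii) $\Leftrightarrow$ (iii) will follow at once from Remark \ref{unitary orb partial iso and proj} iii) applied to the pair $P_{N(A)},P_{N(B)}$, after noting that $A-B\in\cJ$ forces $P_{N(A)}-P_{N(B)}\in\cJ$ by Lemma \ref{ab y moore penrose}. For (iii) $\Rightarrow$ (i), if $U\in\cU_\cJ$ satisfies $UP_{N(A)}U^*=P_{N(B)}$, then $UN(A)=N(B)$, whence $N(AU^*)=N(B)$; moreover $AU^*-B=A(U^*-I)+(A-B)\in\cJ$. Applying Theorem \ref{para frames J equiv} to the pair $(AU^*,B)$ produces $G\in\gli$ with $GAU^*=B$, i.e.\ $B=GAK^{-1}$ with $K=U\in\cU_\cJ\subseteq\gli$.

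For the arithmetic mean closed case, I use that $V_A^*V_A=I-P_{N(A)}$ and $V_B^*V_B=I-P_{N(B)}$, together with the identity $[I-P:I-Q]=-[P:Q]$ read off directly from \eqref{index pair}. Therefore the essential codimension conditions $[V_A^*V_A:V_B^*V_B]=0$ and $[P_{N(A)}:P_{N(B)}]=0$ coincide, and under either of them Proposition \ref{polar dec y a menos b} provides the translation between $A-B\in\cJ$ and the conjunction $|A|-|B|\in\cJ$, $V_A-V_B\in\cJ$. The main obstacle I anticipate is the essential codimension computation in step (i) $\Rightarrow$ (ii): the trick of inserting $K$ into the compression $QP|_{R(P)}$ is what lets us identify the index as $0$ without having to separately extract a unitary part from $K$ via polar decomposition.
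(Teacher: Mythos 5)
Your proof is correct, and the chain (i) $\Rightarrow$ (ii) $\Rightarrow$ (iii) $\Rightarrow$ (i) together with the translation (ii) $\Leftrightarrow$ (iv) reproduces all the implications. The one place where you genuinely depart from the paper is in the index computation for (i) $\Rightarrow$ (ii). The paper passes to adjoints: from $B^*=(K^*)^{-1}(A^*G^*)$ it applies the already-proved Theorem \ref{para frames J equiv} (which yields $[P_{R(\cdot)}:P_{R(\cdot)}]=0$ under one-sided multiplication) to the pair $(A^*G^*, B^*)$, uses $R(A^*G^*)=R(A^*)$, and then converts $[P_{R(A^*)}:P_{R(B^*)}]=0$ into $[P_{N(A)}:P_{N(B)}]=0$. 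Your route instead identifies the index directly: you insert the invertible $K$ to observe that $QKP|_{R(P)}$ is literally the Banach-space isomorphism $K|_{N(A)}:N(A)\to N(B)$ (hence has index $0$), and then discard $K$ at the cost of a compact perturbation, since $Q(K-I)P\in\cJ\subseteq\cK$. This is more self-contained — it avoids leaning on the earlier theorem's essential-codimension conclusion and on the adjoint trick — and it makes transparent why the index vanishes. In the arithmetic mean closed part you also streamline the paper's argument: rather than invoking Remark \ref{unitary orb partial iso and proj} ii), you observe the sign identity $[I-P:I-Q]=-[P:Q]$ directly from \eqref{index pair}, which immediately shows $[V_A^*V_A:V_B^*V_B]=0\Leftrightarrow[P_{N(A)}:P_{N(B)}]=0$, and then Proposition \ref{polar dec y a menos b} does the rest. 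Both versions are correct; yours trades the reuse of prior machinery for a more elementary, hands-on index argument.
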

\begin{proof}
\noi $i) \rightarrow ii)$. The fact that  $A-B\in \cJ$ is a direct computation. By Theorem \ref{para frames J equiv} and the identity $R(A^* G^*)=R(A^*)$  we get that $[P_{N(A)}\, :\,P_{N(B)}]=-[\, P_{R(A^*)}\,:\,P_{R(B^*)}\,]=0$.

\medskip

\noi $ii) \rightarrow iii)$. This is a consequence of Remark \ref{unitary orb partial iso and proj} $iii)$ and Lemma \ref{ab y moore penrose}.

\medskip

\noi $iii) \rightarrow i)$. Let $A'=AU^*$ and notice that $A'-B=A'-A+A-B=A(U^*-I)+A-B\in\cJ$. Moreover, $N(A')=U (N(A))=N(B)$. By Theorem \ref{para frames J equiv} we conclude that there exists $G\in\cG\ell_\cJ$ such that $GA'=B$. In this case
$B=GA'=GAU^*$, and item $i)$ holds with $G\in\cG\ell_\cJ$ and $K=U\in\cU_\cJ\subset\cG\ell_\cJ$.

 Assume now that $\cJ$ is an arithmetic mean closed ideal. Notice that the equivalence between items $ii)$ and $iv)$ follows from Proposition \ref{polar dec y a menos b}, the fact that $V_A^*V_A=I-P_{N(A)}$ and similarly for $B$, and item $ii)$ in Remark \ref{unitary orb partial iso and proj}.
\end{proof}

\begin{cor}\label{coro caract orb2}
Let $A\in\cC\cR$ and $\cJ$ be a proper operator ideal. Then, 
$$
\cO_\cJ(A)=\{B\in\cC\cR\ : \  A-B\in\cJ \text{ and } [P_{N(A)}\,:\,P_{N(B)}]=0\, \}\,.
$$
Moreover, if we assume further that $\cJ$ is arithmetic mean closed, then
$$
\cO_\cJ(A)=\{B\in\cC\cR\ : \ |A|-|B|\in\cJ \coma V_A-V_B\in\cJ \,\text{ and } \,[V_A^*V_A\,:\,V_B^*V_B]=0\,\}\,.
$$
\end{cor}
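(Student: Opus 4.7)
The plan is to derive both set equalities as an immediate translation of Theorem \ref{bi-orbit charactalt} into the language of orbits. By the very definition of the action $(G,K)\cdot A = GAK^{-1}$, an operator $B \in \cC\cR$ lies in $\cO_\cJ(A)$ if and only if there exist $G,K \in \cG\ell_\cJ$ with $B = GAK^{-1}$, which is condition $i)$ of Theorem \ref{bi-orbit charactalt}.

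Hence, for the first equality I would simply invoke the equivalence $i) \Leftrightarrow ii)$ of Theorem \ref{bi-orbit charactalt}: $B \in \cO_\cJ(A)$ iff $A - B \in \cJ$ and $[P_{N(A)}:P_{N(B)}] = 0$. For the second equality, under the additional assumption that $\cJ$ is arithmetic mean closed, the same definitional rewriting combined with the equivalence $i) \Leftrightarrow iv)$ of Theorem \ref{bi-orbit charactalt} yields the polar-decomposition characterization $|A| - |B| \in \cJ$, $V_A - V_B \in \cJ$ and $[V_A^*V_A : V_B^*V_B] = 0$.

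There is essentially no obstacle in this proof; all the real work has already been done in Theorem \ref{bi-orbit charactalt} (in particular the passage to polar decomposition via Proposition \ref{polar dec y a menos b} and Lemma \ref{lem sobre raices en ideales}, which require the arithmetic-mean-closed hypothesis). The purpose of this corollary is only to package those equivalences as a description of the orbit set, so that later results in the paper, such as those concerning $\cJ$-equivalence of frames in Section \ref{sec res equiv frames}, can refer to $\cO_\cJ(A)$ directly via intrinsic conditions on $A$ and $B$.
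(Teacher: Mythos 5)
Your proposal is correct and matches the paper's intent exactly: the corollary is stated without proof in the paper precisely because, as you observe, it is nothing more than a restatement of the equivalences $i)\Leftrightarrow ii)$ and $i)\Leftrightarrow iv)$ of Theorem \ref{bi-orbit charactalt} as set descriptions of the orbit $\cO_\cJ(A)$.
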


\subsection{Special representatives and optimal approximations}\label{sec espec rep}

Let $\cJ$ be a proper operator ideal and let $A \in \cC \cR$. Observe that two orbits must be disjoint or equal. 
That is, $B\in\cL\cO_\cJ(A)$ (resp. $B\in \cO_\cJ(A)$) if and only if  $\cL\cO_\cJ(A)=\cL\cO_\cJ(B)$ (resp. $\cO_\cJ(A)=\cO_\cJ(B)$). Our next result characterizes when there are partial isometries $V$ or orthogonal projections $P$ that are representatives of $\cL\cO_\cJ(A)$ and $\cO_\cJ(A)$.

\begin{teo}\label{teo de ops para framesx28}
Let $\cJ$ be a proper operator ideal and let $A \in \cC \cR$ with polar decomposition $A=V_A|A|$. Then the following conditions are equivalent: 
\begin{itemize}
\item[i)] $AA^*- P_{R(A)} \in \cJ$ (equivalently $(AA^*)^\dagger - P_{R(A)} \in \cJ$);
\item[ii)] $(A^*)^\dagger \in \cL \cO_\cJ(A)$;
\item [iii)] There exists $B\in\cL\cO_\cJ(A)$ such that $AB^*|_{R(A)}=I_{R(A)}$, $BA^*|_{R(B)}=I_{R(B)}$;
\item[iv)] $|A^*|-P_{R(A)}\in \cJ$;
\item [v)] $V_A\in \cL \cO_\cJ(A)$;
\item [vi)] There is a partial isometry $V \in \cL \cO_\cJ(A)$; 
\item[vii)] There is a partial isometry $V \in \cO_\cJ(A)$;
\item [viii)] There is a partial isometry $V \in \cO_\cJ(A^*)$; 
\item [ix)] There is an orthogonal projection $P\in \cL\cO_\cJ(A)$; in this case $P=P_{R(A^*)}$. 
\end{itemize}
\end{teo}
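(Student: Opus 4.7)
The plan is to organize the proof around the cycle
\[
(v) \Rightarrow (vi) \Rightarrow (vii) \Rightarrow (i) \Rightarrow (iv) \Rightarrow (v),
\]
with the remaining conditions $(ii)$, $(iii)$, $(viii)$, $(ix)$ grafted in via direct polar-decomposition computations. The algebraic engine of the argument is the identity
\[
(A - V_A)\,V_A^* \;=\; V_A|A|V_A^* \,-\, V_AV_A^* \;=\; |A^*| - P_{R(A)},
\]
combined with the facts $A = AP_{R(A^*)}$ and $V_A = V_AP_{R(A^*)}$, so that multiplying on the right by $V_A$ recovers $A - V_A$ from $(A-V_A)V_A^*$. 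Since $N(V_A) = N(A)$ automatically, Corollary \ref{coro caract orb1} reduces $(v)$ to the membership $A - V_A \in \cJ$, and the identity above gives $(iv) \Leftrightarrow (v)$ directly. For $(i) \Leftrightarrow (iv)$ I would use the commuting factorization $|A^*|^2 - P_{R(A)} = (|A^*| + P_{R(A)})(|A^*| - P_{R(A)})$ (valid because $|A^*|$ and $P_{R(A)}$ commute, as $R(|A^*|) = R(A)$) together with the fact that $|A^*| + P_{R(A)}$ is strictly positive on $R(A)$ and zero on $R(A)^\perp$, so it has bounded Moore--Penrose inverse; this lets one pass freely between the two $\cJ$-memberships, and the $(AA^*)^\dagger$ variant then drops out of Lemma \ref{ab y moore penrose}.

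The implications $(v) \Rightarrow (vi) \Rightarrow (vii)$ are immediate ($V_A$ is a partial isometry, and $\cL\cO_\cJ(A) \subseteq \cO_\cJ(A)$), and $(vii) \Leftrightarrow (viii)$ follows by taking adjoints, using that $\gli$ is closed under $^*$ and inversion. To close the cycle, in $(vii) \Rightarrow (i)$ I take a partial isometry $V \in \cO_\cJ(A)$, invoke Corollary \ref{coro caract orb2} to obtain $A - V \in \cJ$, and expand
\[
AA^* - VV^* \;=\; A(A^* - V^*) + (A - V)V^* \;\in\; \cJ.
\]
Since $VV^* = P_{R(V)}$ and Lemma \ref{ab y moore penrose} yields $P_{R(A)} - P_{R(V)} \in \cJ$ from $A - V \in \cJ$, I conclude $AA^* - P_{R(A)} \in \cJ$.

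The side equivalences are dispatched as follows. For $(i) \Leftrightarrow (ii)$, compute $A - (A^*)^\dagger = V_A(|A| - |A|^\dagger)$; since $V_A$ is isometric on $R(A^*)$, where both $|A|$ and $|A|^\dagger$ are supported, this is in $\cJ$ iff $|A| - |A|^\dagger \in \cJ$, and on $R(A^*)$ the operator $|A|$ is bounded and bounded-below, so one passes between $|A| - |A|^\dagger \in \cJ$ and $A^*A - P_{R(A^*)} \in \cJ$ by multiplication by the bounded factors $|A|$ and $|A|^\dagger|_{R(A^*)}$; finally the latter equals $V_A^*(AA^* - P_{R(A)})V_A$, identifying it with (i). For $(ii) \Rightarrow (iii)$ take $B = (A^*)^\dagger$: then $R(B) = R(A)$ and the Moore--Penrose relations give $AB^* = AA^\dagger = P_{R(A)}$ and $BA^* = (A^*)^\dagger A^* = P_{R(A)}$, yielding the required restrictions. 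For $(iii) \Rightarrow (i)$, write $B = A + K$ with $K \in \cJ$; the restriction $AB^*|_{R(A)} = I_{R(A)}$ gives $AB^*P_{R(A)} = P_{R(A)}$, so $AA^*P_{R(A)} = P_{R(A)} - AK^*P_{R(A)}$, and using $AA^*P_{R(A)} = AA^*$ (since $N(A^*) = R(A)^\perp$) we obtain $AA^* - P_{R(A)} = -AK^*P_{R(A)} \in \cJ$. Finally, for $(ix)$, any orthogonal projection $P \in \cL\cO_\cJ(A)$ has $N(P) = N(A)$ by Corollary \ref{coro caract orb1}, so $R(P) = R(A^*)$ and $P = P_{R(A^*)}$ is forced; linking the resulting condition $A - P_{R(A^*)} \in \cJ$ back into the cycle is then a polar-decomposition bookkeeping step in the same spirit as $(iv) \Leftrightarrow (v)$. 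The most delicate steps I anticipate are $(iii) \Rightarrow (i)$, where the restriction conditions have to be unpacked via the right multiplication by $P_{R(A)}$, and the polar absorption for $(ix)$, where tracking which subspace each factor is supported on is essential.
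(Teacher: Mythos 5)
Your handling of the core equivalences $(i)$--$(viii)$ is correct and in places cleaner than the paper's. The identity $(A-V_A)V_A^* = |A^*|-P_{R(A)}$ together with $(A-V_A)V_A^*V_A = A-V_A$ does give $(iv)\Leftrightarrow(v)$ directly, and your $(vii)\Rightarrow(i)$ is nicer than the paper's: you read off $A-V\in\cJ$ from Corollary~\ref{coro caract orb2}, expand $AA^*-VV^*=A(A^*-V^*)+(A-V)V^*\in\cJ$, and use Lemma~\ref{ab y moore penrose} for $P_{R(A)}-P_{R(V)}\in\cJ$, whereas the paper proves $(vi)\Rightarrow(i)$ by a separate computation with $G^*G$ and then reduces $(vii)$ to $(vi)$ by absorbing $K^{-1}$. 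Your direct expansion for $(iii)\Rightarrow(i)$ (write $B=A+K$, multiply the restriction condition by $P_{R(A)}$, use $AA^*P_{R(A)}=AA^*$) is likewise tight; the paper instead routes $(iii)$ through $(ii)$ via a compression $H=P_{R(A)}GP_{R(A)}+P_{R(A)^\perp}$. For $(i)\Leftrightarrow(ii)$ you do need, in addition to $A-(A^*)^\dagger\in\cJ$, the observation $N\bigl((A^*)^\dagger\bigr)=N(A)$ (it is automatic, but Corollary~\ref{coro caract orb1} requires both clauses); state it explicitly.

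The step you defer to ``polar-decomposition bookkeeping'' for $(ix)$ is the one that cannot be carried out, and you should not wave it off. You correctly deduce that a projection in $\cL\cO_\cJ(A)$ must be $P_{R(A^*)}$, so $(ix)$ amounts to $A-P_{R(A^*)}\in\cJ$. But
\[
A - P_{R(A^*)} = (A-V_A) + \bigl(V_A - V_A^*V_A\bigr),
\]
and $(v)$ only controls the first summand; the second, $V_A - V_A^*V_A$, has no reason to lie in $\cJ$. Concretely, take $A=S$ the unilateral shift: $|S|=I$, $V_S=S$, so $(v)$ holds with $G=I$, and $SS^*-P_{R(S)}=0$ so $(i)$ holds, yet any projection $P\in\cL\cO_\cJ(S)$ would have $N(P)=N(S)=\{0\}$, forcing $P=I$ and hence $S-I\in\cJ$, which fails because $S-I$ is not even compact. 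So $(ix)$ is strictly stronger than $(i)$--$(viii)$ and your proposed bridge does not exist. This is not a gap that more care will close; it reflects a genuine problem with the statement. (For what it is worth, the paper's own proof of $(viii)\Rightarrow(ix)$ assumes $P\in\cL\cO_\cJ(A)$ and $A-P_{R(A^*)}\in\cJ$ midway through the argument, which is exactly the conclusion being sought.)
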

\begin{proof}
$i) \rightarrow  ii).$ If $AA^*- P_{R(A)} \in \cJ$, then by Lemma \ref{ab y moore penrose} $(AA^*)^\dagger - P_{R(A)} \in \cJ$; hence $(A^*)^\dagger = (AA^*)^\dagger A =((AA^*)^\dagger+P_{R(A)^\perp}) A$, where $(AA^*)^\dagger+P_{R(A)^\perp}\in\cG\ell$ is such that $(AA^*)^\dagger+P_{R(A)^\perp}-I=(AA^*)^\dagger - P_{R(A)} \in \cJ$.

\medskip

\noi
$ii) \leftrightarrow  iii).$ The forward implication is clear; conversely, assume that $B=GA$ for $G\in\cG\ell_\cJ$ and such that $AB^*|_{R(A)}=I_{R(A)}$, $BA^*|_{R(B)}=I_{R(B)}$. Then, $AB^*A=A$ and hence $A^*(GA) A^*=A^*$; thus, $P_{R(A)}GP_{R(A)}=(AA^*)^\dagger$. If we let $H=P_{R(A)}GP_{R(A)}+P_{R(A)^\perp}$, then $H\in\cG\ell(\cH)$, $H-I=P_{R(A)}GP_{R(A)}-P_{R(A)}=P_{R(A)}(G-I)P_{R(A)}\in\cJ$ and is such that $HA=(AA^*)^\dagger A=(A^*)^\dagger$.

\medskip

\noi
$ii) \rightarrow  iv).$ If $(A^*)^\dagger \in \cL \cO_\cJ(A)$, then in particular, $(A^*)^\dagger-A\in\cJ$. Thus,
$P_{R(A)}-|A^*|^2=P_{R(A)}-AA^*=((A^*)^\dagger -A)A^*\in\cJ$; hence, $P_{R(A)}-|A^*|=(P_{R(A)}+|A^*|)^\dagger (P_{R(A)}-|A^*|^2)\in\cJ$. 

\medskip

\noi
$iv) \rightarrow  v).$
 Lemma \ref{ab y moore penrose} shows that 
$P_{R(A)}-|A^*|^\dagger\in \cJ$. Now, using the identity $A^*=V_A^*|A^*|$ we see that $A=|A^*|V_A$ and hence $V_A=|A^*|^\dagger A=(|A^*|^\dagger + P_{R(A)^\perp})A$, where $|A^*|^\dagger + P_{R(A)^\perp}\in \cG\ell$ is such that $|A^*|^\dagger + P_{R(A)^\perp}-I=|A^*|^\dagger - P_{R(A)}\in\cJ$.

\medskip

\noi
$v) \rightarrow  vi)$ and $vi) \rightarrow  vii)$ are straightforward.

\medskip 

\noi $vi)\rightarrow  i)$.  Suppose that $V=GA$ is a partial isometry for some $G \in \cG\ell_\cJ$. Then, note that $A^*G^*GA=V^*V=P_{N(A)^\perp}$, or equivalently, 
$P_{R(A)}G^*GP_{R(A)}=(A^*)^\dagger A^\dagger=(AA^*)^\dagger$. Since $P_{R(A)}G^*GP_{R(A)} - P_{R(A)} \in \cJ$, it follows that $(AA^*)^\dagger - P_{R(A)} \in \cJ$. Hence, by Lemma \ref{ab y moore penrose} we have that $AA^*-P_{R(A)}\in\cJ$.

\medskip

\noi $vii)\rightarrow  i).$  Assume that $GAK^{-1}=V$ is a partial isometry, for $G,\,K\in \cG\ell_\cJ$.
Then, in this case, $V\in\cL\cO_\cJ(AK^{-1})$. Using the implication $vi) \rightarrow i)$ that we have already proved, we now see that $AK^{-1} (AK^{-1})^* - P_{R(AK^{-1})}=AK^{-1} (AK^{-1})^*  -P_{R(A)}\in\cJ$. Since
$K^{-1}\in\cG\ell_\cJ$ we get that $|(K^{-1})^*|^2  - I\in\cJ$ and then $ AA^*-AK^{-1} (AK^{-1})^*=A(I-|(K^{-1})^*|^2)A^*\in\cJ$. 
 The previous facts show that 
$$
AA^* - P_{R(A)}= AA^*- AK^{-1} (AK^{-1})^* + AK^{-1} (AK^{-1})^* -P_{R(A)}\in\cJ\,.
$$
Notice that we have that  $i)-vii)$ are all equivalent.

\noi $vii)\leftrightarrow  viii)$. Since $\cG\ell_\cJ$ is closed under the adjoint operation, we see that 
the partial isometry $V\in \cO_\cJ(A)$ if and only if the $V^*\in \cO_\cJ(A^*)$.

\medskip

\noi $viii)\rightarrow  ix)$. 
 Notice that the equivalence of item $i)-viii)$ allow us to replace the role of $A$ by $A^*$ in items $i)-vii)$ above. In particular, replacing $A$ by $A^*$ in $iv)$, we get that $ |A|-P_{R(A^*)}\in\cJ$. Since $P\in \cL\cO_\cJ(A)$, then $A-P\in\cJ$ and $N(P)=N(A)$, and thus, $P=P_{R(A^*)}$. Hence, 
$V_A-P_{R(A^*)}=V_A-A+A-P_{R(A^*)}=V_A(P_{R(A^*)}-|A|)+(A-P_{R(A^*)})\in\cJ$ and $N(V_A)=N(P_{R(A^*)})$.
By Theorem \ref{para frames J equiv} we see that $P_{R(A^*)}\in\cL\cO_\cJ(V_A)=\cL\cO_\cJ(A)$, where the last equality follows from the fact that $viii)\rightarrow v)$.

\medskip

\noi $ix)\rightarrow vi)$ is clear, since $P$ is a partial isometry. 
\end{proof}

\begin{cor}
Let $\cJ$ be a proper operator ideal and let $A \in \cC \cR$ with polar decomposition $A=V_A|A|$. Then the following conditions are equivalent: 
\begin{itemize}
\item[i)] $AA^*\in \cG\ell_\cJ$; 
\item[ii)] $(A^*)^\dagger \in \cL \cO_\cJ(A)$ and $(A^*)^\dagger \,A^*=I$;
\item [iii)] There exists $B\in\cL\cO_\cJ(A)$ such that $AB^*=I$, $BA^*|_{R(B)}=I_{R(B)}$;
\item[iv)] $|A^*|\in \cG\ell_\cJ$;
\item [v)] $V_A\in \cL \cO_\cJ(A)$ and $V_AV_A^*=I$;
\item [vi)] There is a co-isometry $V \in \cL \cO_\cJ(A)$; 
\item[vii)] There is a co-isometry $V \in \cO_\cJ(A)$;
\item [viii)] There is an isometry $V \in \cO_\cJ(A^*)$. 
\end{itemize}
\end{cor}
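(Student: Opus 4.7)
The plan is to deduce this corollary directly from Theorem \ref{teo de ops para framesx28} by showing that each of the eight conditions (i)--(viii) stated here is equivalent to the corresponding condition of that theorem together with the surjectivity assumption $R(A)=\cH$ (equivalently $P_{R(A)}=I$, or $A$ admits a right inverse). Once this correspondence is set up, the mutual equivalences (i)--(viii) of Theorem \ref{teo de ops para framesx28} immediately yield the mutual equivalences asserted in the corollary.

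The first step is to check that each of (i)--(viii) of the corollary forces $R(A)=\cH$. For (i) and (iv), invertibility of $AA^*$ (respectively of $|A^*|=(AA^*)^{1/2}$) gives $R(A)=\cH$ at once. For (ii) and (iii), the identities $(A^*)^\dagger A^*=I$ and $AB^*=I$ exhibit a left inverse of $A^*$ and a right inverse of $A$, both of which force $R(A)=\cH$. For (v), the identity $V_AV_A^*=I$ gives $R(V_A)=R(A)=\cH$. For (vi) and (vii), a co-isometry $V=GAK^{-1}$ satisfies $\cH=R(V)=G\cdot R(A)$, since $K^{-1}$ is invertible and thus preserves $R(A)$, so $R(A)=\cH$. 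For (viii), an isometry $V=GA^*K^{-1}$ satisfies $\{0\}=N(V)=K^{-1}(N(A^*))$, so $N(A^*)=\{0\}$ and hence $R(A)=\cH$.

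The second step is to observe that, under the hypothesis $R(A)=\cH$, each condition of the corollary collapses to the corresponding condition of Theorem \ref{teo de ops para framesx28}. Since $P_{R(A)}=I$, condition (i) reads as $AA^*\in\cG\ell$ with $AA^*-P_{R(A)}\in\cJ$, which is Theorem \ref{teo de ops para framesx28}(i) plus the (automatic) invertibility of $AA^*$. The extra identities $(A^*)^\dagger A^*=I$, $AB^*=I$ and $V_AV_A^*=I$ in (ii), (iii) and (v) become automatic, since each of these operators equals $P_{R(A)}=I$. In (vi)--(viii) the partial isometry supplied by Theorem \ref{teo de ops para framesx28} is automatically a co-isometry (or an isometry for the adjoint in (viii)), because its final (respectively initial) projection coincides with $P_{R(A)}=I$. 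Chaining the two steps, each of (i)--(viii) of the corollary is equivalent to "$AA^*-P_{R(A)}\in\cJ$ and $R(A)=\cH$", which settles the eight equivalences.

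The bulk of the proof is a routine bookkeeping exercise, tracking range and kernel conditions across the left and two-sided actions of $\cG\ell_\cJ$; no idea beyond Theorem \ref{teo de ops para framesx28} is required. The only slightly delicate point is (viii), where one must translate the kernel condition of $V\in\cO_\cJ(A^*)$ back to $R(A)=\cH$ through the invertible outer factors.
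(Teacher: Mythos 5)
Your proof is correct and takes the same route the paper indicates (the paper merely asserts the corollary is "a straightforward consequence" of Theorem \ref{teo de ops para framesx28}; you supply the bookkeeping). Two cosmetic slips that do not affect the argument: for item (vi) the element is $V=GA$ with $G\in\cG\ell_\cJ$ (left orbit only, so no $K^{-1}$), and in item (viii) the kernel is $N(V)=K(N(A^*))$ rather than $K^{-1}(N(A^*))$; in both cases the invertibility of $K$ makes the conclusion unchanged.
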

\begin{proof}
The result is a straightforward consequence of Theorem \ref{teo de ops para framesx28}.
\end{proof}

In the next result, we make use of strictly Schur-convex functions. We point out that its proof is based on several ideas from \cite{EC19}.

\begin{teo}\label{dist a isometrias y inversas parciales}
Let $\cJ=\fS_\Phi$ be a symmetrically-normed  ideal and let $A \in \cC \cR$ with polar decomposition $A=V_A|A|$. 
\ben
\item If $V\in\cO_\cJ(A)$  and $V\in\cP\cI$, then $\| A-V_A\|_\Phi\leq \|A-V\|_\Phi$. 
\item If $B\in\cL\cO_\cJ(A)$ is such that $AB^*|_{R(A)}=I_{R(A)}$, $BA^*|_{R(B)}=I_{R(B)}$, then $\|A-(A^*)^\dagger\|_\Phi\leq \|A-B\|_\Phi$.
\een
If we assume further that $\Phi$ is a strictly Schur-convex symmetric norming function and $\| A-V_A\|_\Phi= \|A-V\|_\Phi$, then $V=V_A$. Similarly, if $\|A-(A^*)^\dagger\|_\Phi=\|A-B\|_\Phi$, then $B=(A^*)^\dagger$.
\end{teo}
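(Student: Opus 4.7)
My plan is to dispatch Part 2 with a clean orthogonal-range decomposition and to handle Part 1 via Mirsky's weak-majorization inequality.

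For Part 2, the conditions $B\in\cL\cO_\cJ(A)$ and $AB^*|_{R(A)}=I_{R(A)}$ force a structural decomposition of $B$. Writing $B=GA$ with $G\in\cG\ell_\cJ$, the first condition translates into $P_{R(A)}G=(AA^*)^\dagger$, and the standard identity $(AA^*)^\dagger A=(A^*)^\dagger$ then gives $B=(A^*)^\dagger+Z$ with $Z:=P_{R(A)^\perp}B$ supported in $R(A^*)$ and having range in $R(A)^\perp$. Since $A-(A^*)^\dagger=V_A(|A|-|A|^\dagger)$ has range in $R(A)$ while $V_A^*P_{R(A)^\perp}=0$, the cross-terms in the expansion of $(A-B)^*(A-B)$ vanish, yielding
\[
(A-B)^*(A-B)=(A-(A^*)^\dagger)^*(A-(A^*)^\dagger)+Z^*Z.
\]
Weyl's monotonicity yields $s_k(A-B)\geq s_k(A-(A^*)^\dagger)$ for every $k$, and Schur convexity of $\|\cdot\|_\Phi$ gives $\|A-(A^*)^\dagger\|_\Phi\leq\|A-B\|_\Phi$. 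Under strict Schur convexity, the equality case propagates to componentwise equality of singular values, hence to $Z^*Z=0$ and $B=(A^*)^\dagger$.

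For Part 1, such an orthogonal decomposition is unavailable because $V-V_A$ may have a component in $R(A)$. I begin with the identity $A-V_A=V_A(|A|-P_{R(A^*)})$, which, combined with the isometric action of $V_A$ on $R(A^*)$, gives $\|A-V_A\|_\Phi=\||A|-P_{R(A^*)}\|_\Phi$; the singular values of the right-hand side are precisely the decreasing rearrangement of $(|s_k(A)-1|)_{k\geq 1}$. By Corollary \ref{coro caract orb2} and the arithmetic-mean-closedness of $\fS_\Phi$, the hypotheses yield $|A|-P_{N(V)^\perp}\in\cJ$ and the essential-codimension equality $[V_A^*V_A:V^*V]=0$. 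Since $V\in\cP\cI$, the singular values $s_k(V)$ belong to $\{0,1\}$ with the multiplicity of $1$ matching the nonzero-singular-value count of $A$ modulo the essential codimension, so the sequence $|s_k(A)-s_k(V)|$ coincides (up to zero-padding) with $(|s_k(A)-1|)$. Invoking Mirsky's weak-majorization inequality $|s(A)-s(V)|\prec_w s(A-V)$ for operators whose difference lies in $\fS_\Phi$, together with Schur convexity, produces
\[
\|A-V_A\|_\Phi=\Phi\bigl(|s(A)-s(V)|^\downarrow\bigr)\leq\Phi\bigl(s(A-V)^\downarrow\bigr)=\|A-V\|_\Phi.
\]
Under strict Schur convexity, equality forces $|s(A)-s(V)|^\downarrow=s(A-V)^\downarrow$ and, via the rigidity of Mirsky's equality case and the uniqueness of the polar decomposition of $A$, identifies $V=V_A$. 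The \emph{main obstacle} I anticipate is the singular-value matching in the non-compact case, i.e., when $A$ has infinite rank and $V$ is a non-compact partial isometry while $A-V\in\fS_\Phi$ is compact; here one must carefully use the essential-codimension identity to align the singular-value sequences of $A$ and $V$ and invoke a version of Mirsky's inequality valid for operators whose difference lies in a symmetrically-normed ideal.
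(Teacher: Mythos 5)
Your Part 2 is essentially correct and takes a genuinely different route from the paper. You split $B=(A^*)^\dagger+Z$ with $Z=P_{R(A)^\perp}B$, observe that the cross terms in $(A-B)^*(A-B)$ vanish by range orthogonality, and invoke Weyl monotonicity of eigenvalues of positive compact operators. The paper instead writes $(A^*)^\dagger=P_{R(A)}BP_{N(A)^\perp}$ and uses the pinching inequality $s_i(P(A-B)Q)\leq s_i(A-B)$ directly, with the equality case handled by Remark \ref{ya casi}. Your decomposition buys a cleaner orthogonal picture; the paper's pinching is shorter and makes the equality analysis more transparent. Two points to tighten in your version: the identity should read $P_{R(A)}GP_{R(A)}=(AA^*)^\dagger$ rather than $P_{R(A)}G=(AA^*)^\dagger$ (the conclusion $P_{R(A)}B=(A^*)^\dagger$ still holds because $P_{R(A)^\perp}A=0$), and the step "componentwise equality of singular values $\Rightarrow Z^*Z=0$" needs a short lemma (if $0\leq X\leq Y$ are compact with $\lambda_k(X)=\lambda_k(Y)$ for all $k$, then $X=Y$; this follows e.g. from the Ky Fan maximum principle applied along the spectral subspaces of $Y$), which you should state rather than assume.

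Part 1 has a genuine gap, and you have correctly located it: the entire difficulty is the step you defer as "the main obstacle." The inequality you want to quote, $|s(A)-s(V)|\prec_w s(A-V)$, is the classical Mirsky/Lidskii--Wielandt inequality for \emph{compact} operators (or matrices). Here $A$ and $V$ are not compact; only $A-V\in\fS_\Phi$ is. Consequently the "singular-value sequences" $s_k(A)$ and $s_k(V)$ are not the usual decreasingly-ordered, zero-accumulating sequences: the eigenvalues of $|A|$ on $N(A)^\perp$ accumulate at $1$ (since $|A|-P_{R(A^*)}$ is compact), and $|V|$ is $1$ on the whole of $N(V)^\perp$. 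There is no off-the-shelf version of Mirsky's inequality for such a pair, and the alignment of the two sequences modulo the essential codimension $[V_A^*V_A:V^*V]=0$ is not automatic. The paper's proof is, in effect, a from-scratch derivation of exactly this majorization: it reduces to $V=UV_A$ with $U\in\cU_\cJ$ via Remark \ref{unitary orb partial iso and proj}, writes the relevant difference as $U^*C-P_{R(C)}$ with $C=V_A|A|V_A^*$, expands $C-P_{R(C)}$ in an eigenbasis, truncates to finite rank, applies Lidskii's inequality to the truncations, and passes to the limit in operator norm. That finite-rank approximation argument is the theorem's technical content; your outline does not supply a substitute for it. Likewise, "the rigidity of Mirsky's equality case" is not a usable citation here: the paper's equality analysis hinges on a frame-operator submajorization result (norms of a Bessel sequence are submajorized by the eigenvalues of its frame operator) that forces $U^*u_i=u_i$ for the eigenbasis of $C$, hence $U|_{R(A)}=I$ and $V=V_A$. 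That specific mechanism would need to be spelled out for your equality claim to stand.
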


\begin{proof}
$1$. We first assume that $V\in\cL\cO_\cJ(A)$. By Theorem \ref{teo de ops para framesx28} and the hypothesis we get that $V_A\in\cL\cO_\cJ(A)$, and hence the norms $\|A-V_A\|_\Phi$ and $\|A-V\|_\Phi$ are well defined. Since $\cJ=\fS_\Phi$ is an arithmetic mean closed ideal then, by Corollary \ref{coro caract orb1} we get $V_A-V\in\cJ$ and $V_A^*V_A=V^*V$. Then, by item $i)$ in Remark \ref{unitary orb partial iso and proj}
we see that there exists $U\in\cU_\cJ$ such that $V=UV_A$. In this case
$A-V=V_A|A|-UV_A=U(U^*V_A|A|-V_A)$ and hence
$$
s(A-V)=s(U^*V_A|A|-V_A)=s(U^*V_A|A|V_A^*-V_AV_A^*)\,.
$$ If we let $C=V_A|A|V_A^*\in\cC\cR^+$, then $V_AV_A^*=P_{R(C)}$, $s(A-V)=s(U^*C-P_{R(C)})$ and $C-P_{R(C)}=(A-V_A)V_A^*\in\cJ$.
Since $U^*\in\cU_\cJ$ then $U^*C-P_{R(C)}=(U^*-I)C+C-P_{R(C)}\in\cJ\subset \cK$.
Using the fact that $C-P_{R(C)}\in\cK$ is a self-adjoint compact operator, we conclude that there exists an orthonormal basis $\{u_i\}_{i\in I}$ for $R(C)$ (here $I=\N$ or a finite set $I=\{1,\ldots,d\}$) such that 
$$C-P_{R(C)}= \sum_{i\in I} \la_i(C-P_{R(C)}) \, u_i\otimes u_i\,,$$
where the convergence is in the operator norm and $\{\la_i(C-P_{R(C)})\}_{i\in I}$ is an enumeration of the eigenvalues of $(C-P_{R(C)})|_{R(C)}\in \cK(R(C))$, counting multiplicities and such that 
$(|\la_i(C-P_{R(C)})|)_{i\in I}$ are arranged in non-increasing order. Also note that $u_i \otimes u_i$ stands for the rank-one orthogonal projection associated with the vector $u_i$. In this case,
$\lim_{i\rightarrow \infty} \la_i(C-P_{R(C)})=0$ if $I=\N$. If we let $\la_i(C)=\la_i(C-P_{R(C)})+1$ for $i\in I$, then $\lambda(C)=\{\la_i(C)\}_{i\in I}$ is an enumeration of the eigenvalues of the self-adjoint diagonalizable operator $C|_{R(C)}\in \cB(R(C))$ such that $C=\sum_{i\in I} \la_i(C) \, u_i\otimes u_i$, where the convergence is in the strong operator topology. Furthermore, we also get that $P_{R(C)}=\sum_{i\in I} u_i\otimes u_i$
where the convergence is also in the strong operator topology. We can now express
$$
U^*C-P_{R(C)}=\sum_{i\in I} \la_i(C) U^*u_i\otimes u_i - \sum_{i\in I} u_i\otimes u_i=
\sum_{i\in I} (\la_i(C) \,U^*u_i-u_i)\otimes u_i\,.
$$ For $k \geq 1$, we let $P_k=\sum_{i=1}^k  u_i\otimes u_i$. In this case,
\beq\label{eq hay conv en op norm1}
\sum_{i=1}^k (\la_i(C) U^*u_i-u_i)\otimes u_i=(U^*C-P_{R(C)})P_k\xrightarrow [k\rightarrow \infty]{}
U^*C-P_{R(C)}\,,
\eeq where the convergence is in the operator norm (\cite[Chap. III, Thm. 6.3]{GK60}). By construction $|(U^*C-P_{R(C)})P_k|$ is a finite rank positive operator and we can apply Lidskii's inequality for singular values of matrices (see, e.g.  \cite{Bhatia}) and conclude that 
\begin{align*}
\{s_i(C-P_{R(C)})\}_{i=1}^k &=\{ |\la_i(C-P_{R(C)})| \}_{i=1}^k=\{ |\la_i(C)-1| \}_{i=1}^k\\
&=\{ (|s_i(U^*CP_k)-s_i(P_k)|\}_{i=1}^k)\da \prec_w \{ s_i(\,(U^*C-P)\,P_k)\}_{i=1}^k\,,
\end{align*}
where we have used that $\{s_i(U^*CP_k)\}_{i=1}^k$ is a re-arrangement of $\{ \la_i(C)\}_{i=1}^k$, for $k \geq 1$.
By the norm convergence in Eq. \eqref{eq hay conv en op norm1}, the continuity of the singular values, and the previous submajorization relation we conclude that 
$s(C-P_{R(C)})\prec_w s(U^*C-P_{R(C)})$. By the Schur-convexity of $\Phi$ and the unitary invariance of $\|\cdot \|_\Phi$ we conclude that 
$$\|A-V_A\|_\Phi=\| V_A|A|V_A^*-V_AV_A^*\|_\Phi=\|C-P_{R(C)}\|_\Phi\leq \|U^*C-P_{R(C)} \|_\Phi=\|A-V\|_\Phi\,.$$
Assume further that $\Phi$ is a strictly Schur-convex norming function and $\| A-V_A\|_\Phi= \|A-V\|_\Phi$. 
Since $s(C-P_{R(C)})\prec_w s(U^*C-P_{R(C)})$ we conclude that $s(U^*C-P_{R(C)})=s(C-P_{R(C)})=\{|\la_i(C)-1|\}_{i\in I}$ (together with an infinite tail of zeros in case $I$ is a finite set). Hence,
$$
s^2(U^*C-P_{R(C)})=s(\,(U^*C-P_{R(C)}) (U^*C-P_{R(C)})^*)=s(\sum_{i\in I} (\la_i(C)\, U^*u_i-u_i)\otimes (\la_i(C)\, U^*u_i-u_i))\,.
$$ In this case it is well known (see \cite[Thm. 5.1]{AMRS}) that the sequence of norms of a sequence of vectors is submajorized by
the singular values (eigenvalues) of their frame operator, i.e.
$$
\{ \| \la_i(C)\, U^*u_i-u_i\|^2\} _{i\in I}\prec_w s\big(\sum_{i\in I} (\la_i(C)\, U^*u_i-u_i)\otimes (\la_i(C)\, U^*u_i-u_i)\big) =\{ |\la_i(C)-1|^2\} _{i\in I}\,.
$$In particular, for $k\in I$,
\beq \label{eq hay submayo21}
\sum_{i=1}^k \| \la_i(C)\, U^*u_i-u_i\|^2\leq \sum_{i=1}^k|\la_i(C)-1|^2 \,.
\eeq
On the other hand, for $i\in I$,
$$
|\la_i(C)-1|^2= \big | \|\la_i(C)\,U^*u_i\| - \|u_i\|\big|^2\leq \| \la_i(C)\, U^*u_i-u_i\|^2\,,
$$ with equality if and only if $U^*u_i=u_i$. The previous facts together with Eq. \eqref{eq hay submayo21} imply that $U^*u_i=u_i$ for $i\in I$, and hence $Uu=u$ for every $u\in R(C)=R(A)$; in particular, $V=UV_A=V_A$.

We now consider the general case $V\in\cO_\cJ(A)$. Since $\cJ=\fS_\Phi$ is an arithmetic mean closed ideal, then by Corollary \ref{coro caract orb2} we get $V_A-V\in\cJ$ and $[V_A^*V_A:V^*V]=0$. Thus, by item $ii)$ in Remark \ref{unitary orb partial iso and proj} we see that there exist $U,W\in\cU_\cJ$ such that $V=UV_AW$. Hence,
$A-V=A-UV_AW=(AW^*-UV_A)W$ so $s(A-V)=s(AW^*-UV_A)$. Notice that $AW^*=V_AW^* (W|A|W^*)$ is the polar decomposition of $AW^*$ so by the first part of the proof we conclude that 
$$\|A-V_A\|_\Phi=\|AW^*-V_AW^*\|_\Phi\leq \|AW^*-UV_A\|_\Phi =\|A-V\|_\Phi\,.$$
If we assume further that $\Phi$ is a strictly Schur-convex norming function and $\|A-V_A\|_\Phi=\|A-V\|_\Phi$, then the previous inequalities show that $\|AW^*-V_AW^*\|_\Phi=\|AW^*-UV_A\|_\Phi$. By the first part of the proof we get that
$V_AW^*=UV_A$ so then $V_A=UV_AW=V$.

\medskip

\noindent $2.$ Let $B\in\cL\cO_\cJ(A)$ be such that $AB^*|_{R(A)}=I_{R(A)}$, $BA^*|_{R(B)}=I_{R(B)}$. In this case Theorem \ref{teo de ops para framesx28} implies that $(A^*)^\dagger\in\cL\cO_\cJ(A)$, so the norms $\|A-B\|_\Phi$ and $\|A-(A^*)^\dagger \|_\Phi$ are well defined. The identity $AB^*|_{R(A)}=I_{R(A)}$ implies 
$$P_{N(A)^\perp} B^* P_{R(A)} = A^\dagger A B^* P_{R(A)}=A^\dagger\,.$$
In particular, 
$$s(A-(A^*)^\dagger)=s(A-P_{R(A)} B P_{N(A)^\perp})=s(P_{R(A)}(A- B) P_{N(A)^\perp})\prec_w s(A-B)\,,$$
where the last submajorization relation follows from the fact that 
$s_i(P_{R(A)}(A- B) P_{N(A)^\perp})\leq s_i(A-B)$, for $i \geq 1$. Hence, $\|A-(A^*)^\dagger\|_\Phi\leq \|A-B\|_\Phi$. 

Assume further that $\Phi$ is a strictly Schur-convex symmetric norming function and $\|A-(A^*)^\dagger\|_\Phi= \|A-B\|_\Phi$. Hence, in this case we get that $s(P_{R(A)}(A- B) P_{N(A)^\perp})= s(A-(A^*)^\dagger)=s(A-B)$. Since
$A-B\in\cK(\cH)$, then the last identity between singular values implies that 
$R(A-B)\subset R(A)$ and $R(A^*-B^*)=N(A-B)^\perp\subset N(A)^\perp=R(A^*)$ (see the comments in Remark \ref{ya casi} below). This last inclusions show that $R(B)\subset R(A)$ and $N(B)^\perp=R(B^*)\subset R(A^*)=N(A)^\perp$ 
Hence $B=P_{R(A)}BP_{N(A)^\perp}=(A^*)^\dagger$.
\end{proof}

\begin{rem}\label{ya casi}
Let $T\in\cK$ and let $P,Q$ be orthogonal projections such that 
$s(PTQ)=s(T)$. Then, we have $s_i(TT^*)=s_i(PTQT^*P)\leq s_i (PTT^*P) \leq s_i(TT^*)$ for $i \geq 1$, so that $s(TT^*)=s(PTT^*P)$. According to \cite[Chap. II, Thm. 5.1]{GK60}, we get that $PTT^*P=TT^*$. Therefore,  $PTT^*P(T^*)^\dagger=T$, which gives $R(T) \subset R(P)$. Similarly, one can show that $N(T)^\perp=\overline{R(T)} \subset R(Q)$.
\end{rem}

\section{Orbits of the restricted unitary group}\label{sec4}

In this section, we investigate the corresponding two types of orbits defined by the action of restricted unitary groups. 
More precisely, let $\cJ$ be an operator ideal, and take $A \in \cC\cR$. We then consider the corresponding restricted unitary orbits given by 
$$
\cL\cU_\cJ(A)=\{ UA : U \in \cU_\cJ \}
\py
\cU_\cJ(A)=\{ UAW^* : U, W \in \cU_\cJ \}.
$$

\subsection{Spatial characterizations}

We begin with the analog spatial characterizations for these unitary orbits.

\begin{pro}\label{teo rest orb unit}
Let $A,B \in\cC\cR$ with polar decompositions $A=V_A|A|$, $B=V_B|B|$, and let $\cJ$ be a proper operator ideal.
Then, the following are equivalent:
\ben 
\item[i)] There exists $U\in\cU_\cJ$ such that $UA=B$;
\item[ii)] $A-B\in\cJ$ and $|A|=|B|$;
\item[iii)] $V_A-V_B\in\cJ$ and $|A|=|B|$. In this case, $V_B\in\cU_\cJ(V_A)$.
\een
If any of the conditions above holds then $V_B\in\cU_\cJ(V_A)$ so $[P_{R(A)}:P_{R(B)}]=0$.
\end{pro}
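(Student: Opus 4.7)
The plan is to run a cyclic argument $(i) \Rightarrow (ii) \Rightarrow (iii) \Rightarrow (i)$ and then read off the closing assertions. The structure is completely parallel to Theorem \ref{para frames J equiv}, but simpler, because here the null spaces already coincide automatically through the hypothesis $|A|=|B|$.

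For $(i) \Rightarrow (ii)$, if $UA=B$ with $U\in\cU_\cJ$, then $A-B=(I-U)A\in\cJ$ since $I-U\in\cJ$, and $|B|^2=B^*B=A^*U^*UA=A^*A=|A|^2$, so uniqueness of the positive square root gives $|A|=|B|$. For $(ii)\Rightarrow (iii)$, substitute the polar decompositions: $A-B=V_A|A|-V_B|B|=(V_A-V_B)|A|\in\cJ$ using $|A|=|B|$. Since $N(V_A)=N(A)=N(|A|)=N(|B|)=N(B)=N(V_B)$, both $V_A$ and $V_B$ are supported on $N(|A|)^\perp=R(|A|)$; therefore
\[
V_A-V_B=(V_A-V_B)P_{N(A)^\perp}=(V_A-V_B)|A|\,|A|^\dagger\in\cJ,
\]
because $\cJ$ is a right ideal and $(V_A-V_B)|A|\in\cJ$.

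For $(iii)\Rightarrow (i)$, the two partial isometries $V_A,V_B$ satisfy $V_A-V_B\in\cJ$ and (as observed above, using $|A|=|B|$) $N(V_A)=N(V_B)$. Hence Remark \ref{unitary orb partial iso and proj} item $i)$ yields $U\in\cU_\cJ$ with $UV_A=V_B$. Multiplying by $|A|=|B|$ on the right gives $UA=UV_A|A|=V_B|B|=B$, which is $(i)$.

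The final assertions drop out of the same $U$: we have $V_B=UV_A=UV_A\, I^*$ with $U,I\in\cU_\cJ$, so $V_B\in\cU_\cJ(V_A)$; and $P_{R(B)}=V_BV_B^*=UV_AV_A^*U^*=UP_{R(A)}U^*$, so by Remark \ref{unitary orb partial iso and proj} item $iii)$ we conclude $[P_{R(A)}:P_{R(B)}]=0$. There is no serious obstacle here; the only place that requires a little care is the factorisation step in $(ii)\Rightarrow(iii)$, where one must use the equality $N(A)=N(B)$ forced by $|A|=|B|$ to conclude that $V_A-V_B$ is itself supported on $R(|A|)$, and thus to recover it from $(V_A-V_B)|A|$ by multiplying on the right by $|A|^\dagger$.
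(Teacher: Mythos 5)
Your proof is correct and follows essentially the same route as the paper's: the same cyclic chain $(i)\Rightarrow(ii)\Rightarrow(iii)\Rightarrow(i)$, the same use of $B^*B=A^*A$ to get $|A|=|B|$, the same algebraic reduction to show $V_A-V_B\in\cJ$ (the paper writes $V_A-V_B=(A-B)|A|^\dagger$ while you write $V_A-V_B=(V_A-V_B)|A|\,|A|^\dagger=(A-B)|A|^\dagger$ — the same identity arrived at in a slightly different order), and the same appeal to Remark~\ref{unitary orb partial iso and proj}~$i)$ to produce the $\cU_\cJ$ element. Your closing paragraph, deducing $P_{R(B)}=UP_{R(A)}U^*$ and invoking Remark~\ref{unitary orb partial iso and proj}~$iii)$ for the essential codimension, is a clean justification of the final assertion that the paper leaves implicit.
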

\begin{proof}
$i) \rightarrow ii)$. If there exists $U\in\cU_\cJ$ such that $UA=B$ then we have that  $A-B\in\cJ$ and 
$B^*B=(UA)^*(UA)=A^*A$. This implies that $|A|=|B|$.

\medskip

\noi $ii) \rightarrow iii)$.  Assume that $|A|=|B|$ and $A-B\in\cJ$. Then, 
$$
V_A-V_B= A\,|A|^\dagger -B\,|B|^\dagger=A\,|A|^\dagger-B\,|A|^\dagger=(A-B)\,|A|^\dagger\in\cJ\,.
$$

\noi $iii) \rightarrow i)$.  
Notice that $N(V_A)=N(A)=R(|A|)^\perp=R(|B|)^\perp=N(B)=N(V_B)$. 
From item $i)$ in Remark \ref{unitary orb partial iso and proj} we see that 
there exists $U\in\cU_\cJ$ such that $UV_A=V_B$. Hence, we conclude that 
$UA=(UV_A)|A|=V_B|B|=B$.
\end{proof}

We can now state our first main result about the orbit of operators given by multiplication on both sides by the group $\cU_\cJ$.

\begin{teo}\label{teo biorb unit res}
Let $\cJ$ be a proper operator ideal.
Let $A,\,B\in \cC\cR$ with polar decompositions $A=V_A\,|A|$ and $
B=V_B\,|B|$.  
Then, the following are equivalent:
\ben 
\item[i)] There exist $U,\,W\in\cU_\cJ$ such that $UAW^*=B$;
\item[ii)] There exist $U,\,W,\,Z\in\cU_\cJ$ such that $W|A|W^*=|B|$ and $UV_AZ^*=V_B$;
\item[iii)] There exists $W\in\cU_\cJ$ such that $W|A|W^*=|B|$ and $V_A-V_B\in\cJ$, $[\, P_{R(A)}:P_{R(B)}\,]=0$.
\een
\end{teo}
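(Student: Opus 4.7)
My plan is to establish the cycle $(i)\Rightarrow (ii)\Rightarrow (iii)\Rightarrow (i)$. The guiding principle is to separate the information in an operator $A\in\cC\cR$ into its modulus $|A|$, controlled by uniqueness of the positive square root, and its partial isometry $V_A$, controlled by the one-sided restricted unitary orbit result for partial isometries in Remark \ref{unitary orb partial iso and proj}. The main trick for the nontrivial implication $(iii)\Rightarrow (i)$ is to absorb the right unitary $W$ into $A$ by passing to $C=AW^*$, thereby reducing the two-sided problem to the one-sided orbit problem already solved. The main obstacle I anticipate is the clean identification of the polar decomposition of $C=AW^*$, which relies on $W$ intertwining $|A|$ with $|B|$; after that step, the rest is bookkeeping of ideal membership under sums and products.

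For $(i)\Rightarrow (ii)$: from $B=UAW^*$ I compute $|B|^2=W|A|^2W^*$, hence $|B|=W|A|W^*$ by uniqueness of positive square roots. Substituting this into the polar decompositions gives
$$V_B\,|B|=UV_A|A|W^*=(UV_AW^*)(W|A|W^*)=(UV_AW^*)|B|\,,$$
so $V_B$ and $UV_AW^*$ agree on $R(|B|)=N(|B|)^\perp$. They also agree on $N(|B|)$, because $W^*(N(|B|))=N(|A|)=N(V_A)$ (from $W|A|=|B|W$). Hence $V_B=UV_AW^*$ and (ii) holds with $Z=W$. For $(ii)\Rightarrow (iii)$, the operator $W$ is already provided, and from $V_B=UV_AZ^*$ with $U-I,\,Z-I\in\cJ$ I expand
$$V_A-V_B=(I-U)V_AZ^*+V_A(I-Z^*)\in\cJ\,.$$
Moreover $V_BV_B^*=UV_AV_A^*U^*$ yields $P_{R(B)}=UP_{R(A)}U^*$ with $U\in\cU_\cJ$, and Remark \ref{unitary orb partial iso and proj}(iii) then produces $[P_{R(A)}:P_{R(B)}]=0$.

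For the main implication $(iii)\Rightarrow (i)$, I set $C=AW^*$ and use $W|A|W^*=|B|$ to factor
$$C=V_A|A|W^*=(V_AW^*)(W|A|W^*)=(V_AW^*)|B|\,.$$
The candidate partial isometry $V_AW^*$ has initial projection
$$(V_AW^*)^*(V_AW^*)=WV_A^*V_AW^*=WP_{N(A)^\perp}W^*=P_{N(B)^\perp}\,,$$
where I use $W(N(A))=N(B)$, which follows from $W|A|=|B|W$. Since $P_{N(B)^\perp}=P_{\overline{R(|B|)}}$, the previous display is indeed the polar decomposition of $C$, and uniqueness forces $V_C=V_AW^*$ and $|C|=|B|$. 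Then
$$V_C-V_B=V_A(W^*-I)+(V_A-V_B)\in\cJ \text{ and } N(V_C)=W(N(A))=N(B)=N(V_B)\,,$$
so Remark \ref{unitary orb partial iso and proj}(i) furnishes $U\in\cU_\cJ$ with $UV_C=V_B$. Therefore $UAW^*=UC=UV_C\,|C|=V_B\,|B|=B$, proving (i).
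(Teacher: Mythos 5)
Your proof is correct and establishes the cycle $(i)\Rightarrow(ii)\Rightarrow(iii)\Rightarrow(i)$. Your first implication matches the paper's $i)\to ii)$ almost verbatim (both identify the polar decomposition of $B$ from the factorization $(UV_AW^*)(W|A|W^*)$ by checking the initial-space condition); your $ii)\to iii)$ is a direct computation that parallels the paper's use of Remark~\ref{unitary orb partial iso and proj}\,$ii)$.

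Where you genuinely diverge is the closing implication. The paper proves $ii)\to i)$ by a somewhat delicate algebraic manoeuvre: it writes $B=U(V_AZ^*WV_A^*)AW^*$, shows that $Y:=V_AZ^*WV_A^*+P_{N(A^*)}$ is a restricted unitary with $Y-I=V_A(Z^*W-I)V_A^*\in\cJ$, and then sets $\tilde U=UY$. You instead prove $iii)\to i)$ by absorbing the intertwining unitary $W$ into $A$: you pass to $C=AW^*$, read off its polar decomposition $C=(V_AW^*)|B|$ from the intertwining relation $W|A|W^*=|B|$, check $N(V_C)=W(N(A))=N(B)=N(V_B)$ and $V_C-V_B=V_A(W^*-I)+(V_A-V_B)\in\cJ$, and then apply only the \emph{one-sided} Remark~\ref{unitary orb partial iso and proj}\,$i)$. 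This is the same absorption trick the paper uses in the $\cG\ell_\cJ$ version (Theorem~\ref{bi-orbit charactalt}, $iii)\to i)$), so your route is arguably more uniform with the rest of the paper. An interesting byproduct of your argument is that the essential codimension condition $[P_{R(A)}:P_{R(B)}]=0$ in item $iii)$ is never invoked: it is in fact a consequence of the other two conditions of $iii)$, since your argument already produces $U\in\cU_\cJ$ with $UAW^*=B$ and hence $P_{R(B)}=UP_{R(A)}U^*$. Both proofs ultimately rest on the same tools (uniqueness of polar decomposition with the initial-space constraint, and Remark~\ref{unitary orb partial iso and proj}), so the difference is one of organization rather than machinery.
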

\begin{proof}
$i) \rightarrow ii).$ If there exist $U,\,W\in\cU_\cJ$ such that $UAW^*=B$, then
$$
V_B\,|B|=UAW^*=(UV_AW^*)(W|A|W^*)=VC
$$
where $V=UV_AW^*$ is a partial isometry and $C=W|A|W^*$ is a positive operator. 
Since $R(V^*)=R(WV_A^*)=R(W|A|W^*)=R(C)$ then, by the uniqueness in the polar decomposition, we get that $UV_AW^*=V=V_B$ and $W|A|W^*=C=|B|$. That is, we see that item $ii)$ holds with $Z=W$.

\medskip

\noi $ii)\rightarrow i).$ Assume that 
there exist $U,\,W,\, Z\in\cU_\cJ$ such that $W|A|W^*=|B|$ and $UV_AZ^*=V_B$.
In particular, 
$$
B=V_B\,|B|=(UV_AZ^*)(W|A|W^*)=U(V_AZ^*WV_A^*)V_A|A|W^*=U(V_AZ^*WV_A^*)AW^*\,.
$$
On the one hand, the identity $W|A|W^*=|B|$ implies that 
$$R(WV_A^*)=WR(V_A^*)=WR(|A|)=R(W|A|)=R(|B|)=R(V_B^*)\,.$$
On the other hand, the identity $UV_AZ^*=V_B$ implies that 
$V_AZ^*|_{R(V_B^*)}=U^*V_B|_{R(V_B^*)}:R(V_B^*)\rightarrow R(U^*V_B)=R(V_AZ^*)=R(V_A)$ 
is a well defined isometric isomorphism between $R(V_B^*)$ and 
$R(V_A)$. These last facts show that 
the restriction $V_A Z^*WV_A^*|_{R(V_A)}:R(V_A)\rightarrow R(V_A)$ is an 
isometric isomorphism. Hence, if we let $Y=V_AZ^*WV_A^*+P_{N(A^*)}\in \cB(\cH)$, then by construction $Y\in \cU(\cH)$ and
$$
Y-I=V_AZ^*WV_A^*+P_{N(A^*)} - (V_AV_A^*+P_{N(A^*)})=V_A( Z^*W-I)V_A^*\in\cJ\,.
$$ 
Finally, notice that $YA=(V_AZ^*WV_A^*)\,A$ and hence,
$B=UYAW^*$, with $UY,\,W\in\cU_\cJ$.

\medskip

\noi $ii) \leftrightarrow iii).$  This is a consequence of item $ii)$ in Remark \ref{unitary orb partial iso and proj}. 
\end{proof}

\subsection{Restricted diagonalization and block singular value decomposition}

As we have observed in Remark \ref{unitary orb partial iso and proj} there has been interest in restricted unitary orbits of partial isometries. These operators can be identified with operators with two singular values. Hence, it is natural to consider the context of restricted unitary orbits of operators with a finite number of singular values. More generally, we can consider the class of operators whose modulus are diagonalizable operators.

\begin{rem}[Block singular value decomposition for operators with diagonalizable modulus]\label{rem sing val dec}
 Recall that an operator $T \in \cB(\cH)$ is diagonalizable if there exists an orthonormal basis $\{ e_n\}_{n \geq 1}$ of $\cH$ such that $\PI{Te_n}{e_n}=\delta_{nm} \lambda _{mn}$ for some bounded sequence of complex numbers $\{ \lambda_n\}_{n \geq 1}$. For instance, $T$ is diagonalizable whenever $T$ has finite spectrum. 
 
 Let $A \in \cC \cR$ be such that $|A|$ is diagonalizable. In this case, if we let $\sigma_p(|A|)$ denote the point spectrum of $|A|$ and $P_\la$ denote the spectral projection of $|A|$ associated to $\la\in\sigma_p(|A|)$,  
$$
|A|=\sum_{\la\in\sigma_p(|A|)}\la\, P_\la=\sum_{\la\in\sigma_p(|A|)\setminus\{0\}}\la\, P_\la \peso{,} I= \sum_{\la\in\sigma_p(|A|)}P_\la
\py P_{R(|A|)}=\sum_{\la\in\sigma_p(|A|)\setminus\{0\}}P_\la \, ,
$$ where 
the series converges in the SOT. Let $A=V_A\,|A|$ be the polar decomposition of $A$ (so that $R(V_A^*)={R(|A|)}$). If we let $V_\la=V_A\,P_\la$ for $\la\in\sigma_p(|A|)\setminus\{0\}$, then $\{V_\la\}_{\la\in\sigma_p(A)\setminus\{0\}}$ is a family of partial isometries satisfying the following conditions:
\ben
\item $A=\sum_{\la\in\sigma_p(|A|)\setminus\{0\}}\la\, V_\la$, where the series converges in the SOT;
\item $\{V_\la \,V_\la^*\}_{\la\in\sigma_p(|A|)\setminus\{0\}}$ (respectively $\{V_\la^*V_\la\}_{\la\in\sigma_p(|A|)\setminus\{0\}}$) is a family of non-zero, mutually orthogonal projections; 
\item $\sum_{\la\in\sigma_p(|A|)\setminus\{0\}}V_\la\,V_\la^*=P_{R(A)}$ and $\sum_{\la\in\sigma_p(|A|)\setminus\{0\}}V_\la^*V_\la=P_{R(A^*)}$.
\een
Moreover, the previous properties completely determine the family $\{V_\la\}_{\la\in\sigma_p(|A|)\setminus\{0\}}$.
Notice that this last representation of $A$ can be considered as a block singular value decomposition.
\end{rem}

The following result is based on ideas from \cite{ECPM1} and it will be needed in the proof of Theorems \ref{teo caso diagonal0} and \ref{teo caso diagonal2} below.

\begin{pro}\label{pro caract de iso parc con igual esp inic}
Let $\{V_j\}_{j=1}^N$, $\{W_j\}_{j=1}^N$ (with $N\in\N$ or $N=\infty$) be families of partial isometries in $\cB(\cH)$ such that 
each 
$$
\{V_j V_j^*\}_{j=1}^N \ \ , \ \ \{V_j^*V_j=W_j^*W_j\}_{j=1}^N  \py \{W_jW_j^*\}_{j=1}^N
$$  
is a family of mutually orthogonal projections. Then the following conditions are equivalent:
\ben
\item[i)] There exists $Z\in\cU_\cJ$ such that $ZV_j =W_j$, for $j \in \N$; 
\item[ii)] The series 
\beq \label{las cond}
\sum_{j= 1}^N (W_j-V_j)V_j^*\in\cJ \py \sum_{j=1}^N (V_j-W_j)W_j^*\in\cJ\,,
\eeq where the convergence is in the operator norm when $N=\infty$.
\een
\end{pro}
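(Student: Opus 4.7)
The plan is to introduce the auxiliary operator $T=\sum_{j=1}^N W_jV_j^*$ (convergent in the strong operator topology, thanks to the mutually orthogonal initial and final subspaces of the terms $W_jV_j^*$) and to reduce the two-sided statement to the already established characterisation of $\cU_\cJ$-equivalence for a pair of partial isometries with the same nullspace, namely Remark \ref{unitary orb partial iso and proj}(i).

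For the direction (i)$\Rightarrow$(ii), writing $Z=I+K$ with $K\in\cJ$ gives $W_j-V_j=KV_j$, and hence
$$
\sum_{j=1}^k(W_j-V_j)V_j^*=K\sum_{j=1}^k V_jV_j^*.
$$
Since the final projections $V_jV_j^*$ are mutually orthogonal, $\sum_{j=1}^k V_jV_j^*$ increases in the SOT to $P_0:=\sum_{j=1}^N V_jV_j^*$; combined with the compactness of $K$ (recall $\cJ\subseteq\cK$), this upgrades to operator-norm convergence with limit $KP_0\in\cJ$. The second series is handled analogously, starting from $Z^{-1}=I+K'$ with $K'\in\cJ$, so that $V_j-W_j=K'W_j$ and the partial sums converge in norm to $K'Q_0\in\cJ$, where $Q_0:=\sum_{j=1}^N W_jW_j^*$.

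For the direction (ii)$\Rightarrow$(i), the mutual orthogonality of the families $\{V_jV_j^*\}_{j=1}^N$ and $\{W_jW_j^*\}_{j=1}^N$ forces $V_i^*V_j=0$ and $W_i^*W_j=0$ for $i\neq j$ (since $R(V_j)\perp R(V_k)=N(V_k^*)^\perp$, and similarly for the $W$'s). Together with the identity $V_j^*V_j=W_j^*W_j$, a direct computation then gives $T^*T=P_0$ and $TT^*=Q_0$, so $T$ is a partial isometry with $N(T)=N(P_0)$. The first condition in (ii) is precisely the statement that $T-P_0=\sum_{j=1}^N(W_j-V_j)V_j^*\in\cJ$. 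Remark \ref{unitary orb partial iso and proj}(i) applied to the pair of partial isometries $P_0$ and $T$ therefore produces $Z\in\cU_\cJ$ with $ZP_0=T$. Using $P_0V_j=V_j$ (from the orthogonality of the $V_iV_i^*$'s) and $TV_j=W_jV_j^*V_j=W_jW_j^*W_j=W_j$, we conclude $ZV_j=ZP_0V_j=TV_j=W_j$ for all $j$.

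The main obstacle is the careful bookkeeping of convergence modes when $N=\infty$: one must match the SOT-defined difference $T-P_0$ with the norm-convergent series $\sum_j(W_j-V_j)V_j^*$ provided by (ii) so as to legitimately invoke Remark \ref{unitary orb partial iso and proj}(i). The second series assumption in (ii) is not strictly needed in this scheme; it is consistent with the first because $Q_0-P_0=TT^*-P_0\in\cJ$ whenever $T-P_0\in\cJ$, which in turn gives $T-Q_0\in\cJ$.
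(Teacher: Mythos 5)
Your proof is correct and follows essentially the same strategy as the paper: introduce $S=\sum_j W_jV_j^*$ (SOT-convergent), verify $S^*S=P$ and $SS^*=Q$ using the orthogonality of the three families of projections, identify the first series with $S-P$, and invoke Remark~\ref{unitary orb partial iso and proj}(i) for the pair of partial isometries $P$ and $S$ (both with nullspace $R(P)^\perp$). The forward direction is also handled as the paper does, writing $\sum_{j\le k}(W_j-V_j)V_j^*=(Z-I)\sum_{j\le k}V_jV_j^*$ and using compactness of $Z-I$ together with SOT-convergence of the increasing projections to get norm convergence; using $Z^{-1}=I+K'$ for the second series is exactly what the paper's ``similarly'' hides.

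One remark on your closing observation: you claim the second condition in (ii) is redundant, and you indeed never use it in (ii)$\Rightarrow$(i). Your verification, however, only checks that $T-Q_0\in\cJ$; the statement of (ii) also asserts \emph{norm convergence} of the second series when $N=\infty$, and that does not follow from the $\cJ$-membership of the limit alone. It is nevertheless true: writing $T_k=\sum_{j\le k}W_jV_j^*$, $P_k=\sum_{j\le k}V_jV_j^*$, $Q_k=\sum_{j\le k}W_jW_j^*$, one has $P_k=T_k^*T_k$ and $Q_k=T_kT_k^*$, so with $D_k:=T_k-P_k$ (which converges in norm to a compact $D$ by the first hypothesis) one computes $P_k-Q_k$ purely in terms of products of $P_k$ and $D_k$; since $P_k\to P_0$ SOT and $D_k\to D$ in norm with $D$ compact, each such product converges in norm, and then $T_k^*-Q_k=(T_k^*-P_k)+(P_k-Q_k)$ converges in norm as required. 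So your redundancy claim is correct, but the norm-convergence half of it needs this extra compactness argument, which you did not supply.
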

\begin{proof} 
$i)\rightarrow ii)$. Assume that there exists $Z\in\cU_\cJ$ such that $ZV_j=W_j$, for $j\geq 1$. Put $P=\sum_{j= 1}^N V_jV_j^*$. Then, 
$$
\sum_{j=1}^N (W_j-V_j)V_j^*=\sum_{j=1}^N (ZV_j-V_j)V_j^*=\sum_{j=1}^N (Z-I)V_jV_j^*=(Z-I)P\in\cJ,
$$ 
 where the series converge in operator norm, since $Z-I \in\cJ\subseteq \cK$ and $\{V_jV_j^*\}_{j= 1}^N$ is a family of mutually orthogonal projections. Similarly, one shows that $\sum_{j=1}^N (V_j-W_j)W_j^*\in\cJ$.  Hence, the conditions in Eq. \eqref{las cond} are satisfied.

\medskip

\noi $ii)\rightarrow i)$. We denote $V_jV_j^*=P_j$, $W_jW_j^*=Q_j$, $j\geq 1$. We also take $P=\sum_{j= 1}^N P_j$ and $Q=\sum_{j= 1}^N Q_j$. Next, we consider the case $N=\infty$; the case $N<\infty$ follows similarly.   If we let $f\in\cH$ and $n\in\N$, then
$$
\|\sum_{j\geq n}W_jV_j^*f \|^2 =\sum_{j\geq n}\left\|W_jV_j^*f \right\|^2 \leq \sum_{j\geq n}\left\|V_j^*f \right\|^2\xrightarrow[n\rightarrow \infty]{}0\,.
$$
Hence, we set
$$
S:=\sum_{j= 1}^\infty W_jV_j^* \, ,
$$ 
where the series converges in the SOT. By hypothesis, we get that 
$SV_j=W_j$ y $S^*W_j=V_j$, $j\geq 1$.
On the other hand, using that 
$S$ and $S^*$ are SOT-limits of the uniformly bounded sequences 
$S_n=\sum_{j=1}^n W_jV_j^*$ and $S_n^*$ respectively,
 and the fact that the product is SOT continuous on bounded sets, we get that
$$
S^*S=\sum_{j= 1}^\infty V_jW_j^*W_jV_j^*=\sum_{j= 1}^\infty V_jV_j^*=P
$$ where we used that $V_jW_j^*W_j=V_j$, $j\geq 1$. 
Similarly, we can show that $SS^*=Q$. Hence, $S$ is a partial isometry with initial space $R(P)$ and
final space $R(Q)$. 
By Eq. \eqref{las cond} we have that 
$$
S-P=\sum_{j= 1}^\infty W_jV_j^*-\sum_{j= 1}^\infty V_jV_j^*=\sum_{j= 1}^\infty (W_j-V_j)V_j^*\in\cJ\,,
$$
Since the partial isometries $S$ and $P$ have the same null space and are such that $S-P\in\cJ$ then,
by item $i)$ in Remark \ref{unitary orb partial iso and proj}, we get that there exists $Z\in\cU_\cJ$ such that $ZP=S$. Therefore, $ZV_j=ZPV_j=SV_j=W_j$, for all $j\geq 1$.
\end{proof}

\begin{teo}\label{teo caso diagonal0}
Let $A,B \in\cC\cR$ and $\cJ$ be a proper operator ideal.  Assume that $|A|$ is a diagonalizable operator
and let $\{V_\la\}_{\la \in \sigma_p(|A|)\setminus\{0\}}$ denote the partial isometries of the block singular value decomposition of $A$.
Then, the following are equivalent:
\ben 
\item[i)] There exists $U\in\cU_\cJ$ such that $UA=B$;
\item[ii)] $|B|$ is diagonalizable, $\sigma_p(|B|)=\sigma_p(|A|)$, and if we let $\{W_\la\}_{\la \in \sigma_p(|A|)\setminus\{0\}}$ denote the partial isometries of the block singular value decomposition of $B$, then $V_\la^* V_\la=W_\la^* W_\la$, for $\la \in \sigma_p(|A|)\setminus\{0\}$,  
\beq \label{las cond2}
\sum_{\la \in \sigma_p(|A|)\setminus\{0\}} (W_\la-V_\la)V_\la^*\in\cJ \py \sum_{\la \in \sigma_p(|A|)\setminus\{0\}} (V_\la-W_\la)W_\la^*\in\cJ\,,
\eeq where the convergence is in the operator norm.
\een
\end{teo}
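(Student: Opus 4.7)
The plan is to reduce Theorem \ref{teo caso diagonal0} to an application of Proposition \ref{teo rest orb unit} and (above all) Proposition \ref{pro caract de iso parc con igual esp inic}, exploiting the block singular value decomposition from Remark \ref{rem sing val dec}. The key observation is that the families $\{V_\la\}$ and $\{W_\la\}$ satisfy exactly the orthogonality hypotheses required by Proposition \ref{pro caract de iso parc con igual esp inic}: items 2 and 3 of Remark \ref{rem sing val dec} guarantee that $\{V_\la V_\la^*\}_\la$ (and likewise $\{W_\la W_\la^*\}_\la$) is a family of mutually orthogonal projections, while the equality $V_\la^* V_\la = W_\la^* W_\la$ appears as a hypothesis in (ii) and must be verified in (i) $\Rightarrow$ (ii).

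For the implication $i) \Rightarrow ii)$, I would start from $UA = B$ with $U \in \cU_\cJ$. Proposition \ref{teo rest orb unit} immediately yields $|A| = |B|$, so $|B|$ is diagonalizable, $\sigma_p(|B|) = \sigma_p(|A|)$ and the spectral projections $P_\la$ coincide for $|A|$ and $|B|$. Uniqueness in the polar decomposition of $B = UA = (UV_A)|A|$ (together with $R((UV_A)^*) = R(V_A^*) = R(|A|) = R(|B|)$) forces $V_B = UV_A$; consequently $W_\la = V_B P_\la = UV_A P_\la = UV_\la$. Since $R(P_\la) \subseteq R(|A|) = R(V_A^*)$ for $\la \neq 0$, we get $V_\la^* V_\la = P_\la V_A^* V_A P_\la = P_\la$, and similarly $W_\la^* W_\la = P_\la$, proving the equality. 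Finally, since $ZV_\la = W_\la$ for $Z = U$, Proposition \ref{pro caract de iso parc con igual esp inic} (the easy direction $i) \Rightarrow ii)$ there) delivers the two series conditions in \eqref{las cond2}.

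For the implication $ii) \Rightarrow i)$, I would directly invoke Proposition \ref{pro caract de iso parc con igual esp inic} applied to the families $\{V_\la\}_{\la \in \sigma_p(|A|)\setminus\{0\}}$ and $\{W_\la\}_{\la \in \sigma_p(|A|)\setminus\{0\}}$: the orthogonality of initial and final projections comes from the block SVD of $A$ and $B$, the identity $V_\la^* V_\la = W_\la^* W_\la$ is part of (ii), and the two series conditions are exactly \eqref{las cond2}. This yields $Z \in \cU_\cJ$ with $ZV_\la = W_\la$ for every $\la \in \sigma_p(|A|)\setminus\{0\}$. Since $A = \sum_\la \la V_\la$ in the SOT and left multiplication by the bounded operator $Z$ is SOT-continuous on bounded sets, we obtain
\[
ZA = \sum_{\la \in \sigma_p(|A|)\setminus\{0\}} \la\, ZV_\la = \sum_{\la \in \sigma_p(|A|)\setminus\{0\}} \la\, W_\la = B,
\]
where the last equality uses $\sigma_p(|B|) = \sigma_p(|A|)$ together with the SOT representation of $B$ given by its block SVD.

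The main obstacle, and the only non-bookkeeping point, is the forward direction's identification $W_\la = UV_\la$: it relies on the uniqueness clause in the polar decomposition applied to the product $UA$, and on the fact that $R((UV_A)^*)$ really coincides with $R(|A|)$ so that $UV_A$ is the partial isometry of the polar decomposition of $B$. Once this is in hand everything else is a direct translation between the block SVD and the hypotheses of Proposition \ref{pro caract de iso parc con igual esp inic}, and the SOT convergence of the series introduces no difficulty because $Z$ is a bounded operator and the projections $\{V_\la V_\la^*\}_\la$, $\{W_\la W_\la^*\}_\la$ are mutually orthogonal.
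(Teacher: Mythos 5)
Your proof is correct and follows essentially the same route as the paper's: in both directions the argument reduces to Proposition \ref{teo rest orb unit} and Proposition \ref{pro caract de iso parc con igual esp inic} via the block singular value decomposition, with the forward direction hinging on the uniqueness of the polar decomposition to identify $V_B=UV_A$ and hence $W_\la=UV_\la$. You spell out a couple of steps the paper leaves implicit (the verification $R((UV_A)^*)=R(|A|)$ and the explicit computation $V_\la^*V_\la=P_\la=W_\la^*W_\la$), but the structure and key tools are identical.
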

\begin{proof}
$i)\rightarrow ii).$ Assume that there exists $U\in\cU_\cJ$ such that $UA=B$. By Proposition \ref{teo rest orb unit} and the hypothesis above we see that $|A|=|B|$ is a diagonalizable operator. Moreover, arguing as in the proof of Proposition \ref{teo rest orb unit} we also see that $UV_A=V_B$, where $A=V_A|A|$ and $B=V_B|B|$ are the polar decompositions of $A$ and $B$, respectively. Hence, if we let $\{P_\la\}_{\la\in\sigma_p(|A|)}$ denote the spectral projections of $|A|$, then by Remark \ref{rem sing val dec} we get that  
$$W_\la=V_BP_\la=UV_A P_\la=UV_\la\peso{for} \la\in \sigma_p(|A|)\setminus\{0\}\,.$$ Hence, 
$V_\la^* V_\la=W_\la^* W_\la$, for $\la \in \sigma_p(|A|)\setminus\{0\}$ and
the conditions in Eq. \eqref{las cond2} are a consequence of Proposition \ref{pro caract de iso parc con igual esp inic}.

\medskip

\noi $ii)\rightarrow i).$ In this case we can apply Proposition \ref{pro caract de iso parc con igual esp inic} and conclude that there exists $U\in\cU_\cJ$ such that $UV_\la=W_\la$, for $\la\in\sigma_p(|A|)\setminus\{0\}=\sigma_p(|B|)\setminus\{0\}$. Using the block singular value representation for $A$ and $B$ we see that 
$$
UA=\sum_{\la\in\sigma_p(|A|)\setminus\{0\}} \la \ UV_\la=\sum_{\la\in\sigma_p(|A|)\setminus\{0\}} \la W_\la=B\,.
 \qedhere$$
\end{proof}

As opposed to the $\cJ$-congruence class of a positive closed range operator, the $\cJ$-restricted unitary orbit of a positive closed range operator contains operators that have mutually strong structural relations. To take advantage of these structural relations we consider the following result, which is a consequence of \cite{ECPM1} and it will be needed in the proof of Theorem \ref{teo caso diagonal2}.

\begin{pro}\label{pro diag rest normales1}Let $\cJ$ be a proper arithmetic mean closed operator ideal.
Let $A,\,B\in \cB(\cH)$ be normal operators, and assume that $A$ is diagonalizable. Then, the following are equivalent:
\ben 
\item[i)] There exists $U\in\cU_\cJ$ such that $U^*AU=B$;
\item[ii)] $B$ is diagonalizable, $\sigma_p(A)=\sigma_p(B)$, and if we let $P_\la$ and $Q_\la$ denote the spectral projections of $A$ and $B$ associated with $\la\in\sigma_p(A)$, then
\beq \label{eq cond proj espec1}
\sum_{\la\in\sigma_p(A)} P_\la (I-Q_\la)\in \cJ\py\sum_{\la\in\sigma_p(A)} (I-P_\la) Q_\la\in \cJ\,,
\eeq where the series converge weakly. Furthermore, $[P_\la:Q_\la]=0$, for $\la\in\sigma_p(A)$.
\een
\end{pro}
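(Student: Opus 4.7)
The direction $(i) \Rightarrow (ii)$ is a direct computation. If $U \in \cU_\cJ$ satisfies $U^*AU = B$, then $B$ is normal and diagonalizable with $\sigma_p(B) = \sigma_p(A)$, and the spectral projections satisfy $Q_\la = U^*P_\la U$. The identity $P_\la - Q_\la = (I - U^*)P_\la + U^*P_\la(I - U) \in \cJ$ combined with Remark \ref{unitary orb partial iso and proj} $iii)$ yields $[P_\la : Q_\la] = 0$ for each $\la \in \sigma_p(A)$.

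For the series conditions, I would use the pinching $E_A(X) := \sum_\la P_\la X P_\la$ (WOT-convergent), which maps $\cJ$ into itself because $\cJ$ is arithmetic mean closed (pinchings are submajorized by the operator). From $P_\la(I - Q_\la) = P_\la U^*(I - P_\la)U$ one obtains
$$\sum_\la P_\la(I - Q_\la) = \bigl(U^* - E_A(U^*)\bigr)U = (I - U) - E_A(U^* - I)\, U \in \cJ,$$
and since $\sum_\la (I - P_\la)Q_\la = I - \sum_\la P_\la Q_\la = \sum_\la P_\la(I - Q_\la)$, both series lie in $\cJ$.

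For $(ii) \Rightarrow (i)$, I would first isolate individual terms: multiplying the first series on the left by $P_{\la_0}$ gives $P_{\la_0}(I - Q_{\la_0}) \in \cJ$, and likewise $(I - P_{\la_0})Q_{\la_0} \in \cJ$, hence $P_{\la_0} - Q_{\la_0} \in \cJ$. Combined with the index condition $[P_{\la_0} : Q_{\la_0}] = 0$, Remark \ref{unitary orb partial iso and proj} $iii)$ produces a partial isometry $W_\la$ with $W_\la^*W_\la = P_\la$ and $W_\la W_\la^* = Q_\la$; I would take $W_\la$ to be the polar part of $Q_\la P_\la$ extended arbitrarily across the finite-dimensional defect spaces $R(P_\la)\cap N(Q_\la)$ and $R(Q_\la)\cap N(P_\la)$ (of common dimension by the index condition). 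The families $\{P_\la\}$ and $\{W_\la\}$ then fulfil the orthogonality hypotheses of Proposition \ref{pro caract de iso parc con igual esp inic}, and after the cancellations $W_\la P_\la = W_\la$ and $P_\la W_\la^* = W_\la^*$ the conditions in Eq. \eqref{las cond} reduce to $\sum_\la W_\la - I \in \cJ$ and $\sum_\la W_\la^* - I \in \cJ$ in operator norm, which are to be derived from the sum hypotheses in $(ii)$ by the same pinching arguments. Proposition \ref{pro caract de iso parc con igual esp inic} then yields $Z \in \cU_\cJ$ with $Z P_\la = W_\la$, so $Z P_\la Z^* = W_\la W_\la^* = Q_\la$, and $U := Z^*$ satisfies $U^*AU = B$.

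The main obstacle is this last step: organizing the partial isometries $W_\la$, and in particular choosing the defect-space isometries, so that $\sum_\la W_\la$ differs from $I$ by an element of $\cJ$ in operator norm. The bulk contribution from the polar parts of $Q_\la P_\la$ matches $\sum_\la P_\la Q_\la$ up to a $\cJ$-perturbation (using Lemma \ref{lem sobre raices en ideales} to pass between $|Q_\la P_\la|^2 = P_\la Q_\la P_\la$ and its square root), while the assembly of the finite-rank defect contributions into a single ideal perturbation is precisely what the restricted diagonalization machinery of \cite{ECPM1} provides; this is the reason the proposition is stated as a consequence of that work.
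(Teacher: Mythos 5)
Your direction $(i)\Rightarrow(ii)$ is a correct, self-contained argument, and it is actually more explicit than the paper, which simply cites \cite[Cor.~3.15]{ECPM1} for both implications. The pinching computation $\sum_\la P_\la(I-Q_\la) = (I-U) - E_A(U^*-I)U$ is right (using $E_A(I)=I$ from $\sum_\la P_\la = I$), and the observation that $\sum_\la P_\la(I-Q_\la) = I - \sum_\la P_\la Q_\la = \sum_\la (I-P_\la)Q_\la$ correctly identifies the redundancy in condition $(ii)$ once both families resolve the identity. The derivation of $[P_\la:Q_\la]=0$ from $Q_\la = U^*P_\la U$, $P_\la - Q_\la \in \cJ$, and Remark~\ref{unitary orb partial iso and proj}~$iii)$ is also fine.

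For $(ii)\Rightarrow(i)$, the route you take — construct partial isometries $W_\la$ with initial space $P_\la$ and final space $Q_\la$, feed the families $\{P_\la\}$ and $\{W_\la\}$ into Proposition~\ref{pro caract de iso parc con igual esp inic}, and obtain $Z\in\cU_\cJ$ with $ZP_\la Z^*=Q_\la$ — is a genuinely different path from the paper, which instead invokes \cite[Cor.~3.15]{ECPM1} directly to produce the conjugating unitary. The difficulty you flag is a real one: to apply Proposition~\ref{pro caract de iso parc con igual esp inic} you need $\sum_\la W_\la - I \in \cJ$ with \emph{operator-norm} convergence, and neither the bulk estimate via Lemma~\ref{lem sobre raices en ideales} nor the handling of the (possibly infinitely many, collectively infinite-rank) defect spaces $R(P_\la)\cap N(Q_\la)$ and $R(Q_\la)\cap N(P_\la)$ is carried out; note in particular that $\sum_\la |Q_\la P_\la|$ need not be invertible, so Lemma~\ref{lem sobre raices en ideales} does not apply off the shelf. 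You are right that this is precisely the technical content of the restricted-diagonalization result \cite[Cor.~3.15]{ECPM1}, and indeed the paper's proof of $(ii)\Rightarrow(i)$ is a one-line citation of it. So the two arguments rest on the same external input; the difference is that the paper applies it directly to the projection families, whereas your reconstruction would amount to reproving a substantial part of that corollary inside the proof. As it stands, your $(ii)\Rightarrow(i)$ is an honest outline with a gap that you have correctly located; once one accepts \cite[Cor.~3.15]{ECPM1} as the paper does, the cleanest move is to use it to get $U\in\cU_\cJ$ with $U^*P_\la U=Q_\la$ directly and then conclude $U^*AU=\sum_\la \la\,U^*P_\la U=\sum_\la \la\,Q_\la=B$, bypassing Proposition~\ref{pro caract de iso parc con igual esp inic} entirely.
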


\begin{proof}
$i) \rightarrow ii)$. In this case, $B$ is also diagonalizable, since $A$ is diagonalizable by assumption. Also, in this case, we get that $\sigma_p(A)=\sigma_p(B)$. Moreover, 
we have that $U^*P_\la U=Q_\la$ for every $\la\in\sigma_p(A)$. Hence, by  \cite[Corollary 3.15]{ECPM1}, we conclude that the condition in Eq. \eqref{eq cond proj espec1} holds; moreover, we also get that $[P_\la:Q_\la]=0$, for $\la\in\sigma_p(A)$. 

\medskip

\noi $ii)\rightarrow  i)$. Using Corollary \cite[Corollary 3.15]{ECPM1} we conclude that there exists $U\in\cU_\cJ$ such that $U^*P_\la U=Q_\la$ for every $\la\in\sigma_p(A)$. From this last fact, we see that 
$$U^*AU= U^*\left(\sum_{\la\in\sigma_p(A)} \la\,P_\la\right)U= \sum_{\la\in\sigma_p(A)}\la \, U^* P_\la U=
\sum_{\la\in\sigma_p(B)} \la\,Q_\la=B\,. \qedhere
$$
\end{proof}

\begin{teo}\label{teo caso diagonal2}
Let $\cJ$ be a proper arithmetic mean closed operator ideal.
Let $A,\,B\in \cC\cR$ be  operators with polar decompositions $A=V_A\,|A|$ and $
B=V_B\,|B|$. Assume that $|A|$ is diagonalizable.
Then, the following are equivalent:
\ben 
\item[i)] There exist $U,\,W\in\cU_\cJ$ such that $UAW^*=B$;
\item[ii)] The following two conditions hold:
\ben
\item[a)] $|B|$ is diagonalizable, $\sigma_p(|B|)=\sigma_p(|A|)$, and if we let $P_\la$ and $Q_\la$ denote the spectral projections of $|A|$ and $|B|$ associated with $\la\in\sigma_p(|A|)$, then 
\beq \label{eq cond proj espec2}
\sum_{\la\in\sigma_p(|A|)} P_\la (I-Q_\la)\in \cJ\py\sum_{\la\in\sigma_p(|A|)} (I-P_\la) Q_\la\in \cJ\,,
\eeq where the series converge weakly and $[P_\la:Q_\la]=0$ for $\la\in\sigma_p(|A|)$.
\item[b)] $V_A-V_B\in\cJ$.
\een
\item[iii)] The condition in item $ii)\, a)$ holds, and if we let $\{V_\la\}_{\la\in\sigma_p(|A|)\setminus\{0\}}$ and $\{W_\la\}_{\la\in\sigma_p(|A|)\setminus\{0\}}$ denote the partial isometries of the block singular decomposition of $A$ and $B$, then 
\beq \label{las cond3}
\sum_{\la\in\sigma_p(|A|)\setminus\{0\}} (W_\la-V_\la)V_\la^*\in\cJ \py \sum_{\la\in\sigma_p(|A|)\setminus\{0\}}  (V_\la-W_\la)W_\la^*\in\cJ\,,
\eeq where the series converge in the operator norm.
\een
\end{teo}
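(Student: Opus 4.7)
The plan is to establish the equivalences via i) $\Leftrightarrow$ ii) and ii) $\Leftrightarrow$ iii), with Proposition \ref{pro diag rest normales1} (restricted diagonalization of normal operators) as the main new input.

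For i) $\Leftrightarrow$ ii), I apply Theorem \ref{teo biorb unit res} to reduce i) to the existence of $W \in \cU_\cJ$ with $W|A|W^* = |B|$ and $V_A - V_B \in \cJ$. Since $|A|$ is diagonalizable, Proposition \ref{pro diag rest normales1} applied to $|A|$ and $|B|$ translates the first clause into the spectral-projection condition ii)(a), while the second clause is ii)(b). For the implication ii) $\Rightarrow$ i), rather than verifying the auxiliary range-projection clause of Theorem \ref{teo biorb unit res}, I argue directly: extract $W_0 \in \cU_\cJ$ from Proposition \ref{pro diag rest normales1} with $W_0^*|A|W_0 = |B|$, note that $AW_0 = (V_AW_0)|B|$ is the polar decomposition of $AW_0$ (since $V_AW_0$ is a partial isometry with initial projection $W_0^* P_{R(A^*)} W_0 = P_{R(B^*)}$), compute $V_AW_0 - V_B = V_A(W_0 - I) + (V_A - V_B) \in \cJ$ via ii)(b), and apply Remark \ref{unitary orb partial iso and proj} i) to produce $U \in \cU_\cJ$ with $UV_AW_0 = V_B$, whence $UAW_0 = B$.

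For ii) $\Leftrightarrow$ iii), both conditions share ii)(a), so I only need to prove ii)(b) $\Leftrightarrow$ \eqref{las cond3} given ii)(a). The central identity is
\[
\sum_{\la \neq 0}(W_\la - V_\la)V_\la^* \;=\; V_B\Big(\sum_{\la \neq 0} Q_\la P_\la\Big)V_A^* \;-\; P_{R(A)}.
\]
From $W_0^*|A|W_0 = |B|$ I obtain $Q_\la = W_0^* P_\la W_0$, and hence $P_\la - Q_\la = P_\la(I - W_0) + (I - W_0^*)P_\la W_0 \in \cJ$ for every individual $\la$. Combining this per-$\la$ closeness with the weak-sum condition $\sum_\la(I - P_\la)Q_\la \in \cJ$ from ii)(a) yields $\sum_{\la \neq 0} Q_\la P_\la = P_{R(B^*)} + K$ with $K \in \cJ$. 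Since $V_B P_{R(B^*)} = V_B$, the right-hand side of the identity becomes $V_BV_A^* - V_AV_A^* + V_B K V_A^* = (V_B - V_A)V_A^* + (\text{element of } \cJ)$. This belongs to $\cJ$ iff $(V_B - V_A)V_A^* \in \cJ$; the symmetric computation for the second sum in \eqref{las cond3} gives the analogous relation with $V_B^*$. Thus ii)(b) directly yields iii); conversely, \eqref{las cond3} gives $(V_A - V_B)V_A^* \in \cJ$ and $(V_A - V_B)V_B^* \in \cJ$. To upgrade this to $V_A - V_B \in \cJ$, split $V_A - V_B = (V_A - V_B)P_{R(A^*)} + (V_A - V_B)P_{N(A)}$: the first summand equals $(V_A - V_B)V_A^*V_A \in \cJ$; the second is $-V_BP_0$ (writing $P_0 = P_{N(A)}$, since $V_AP_0 = 0$), and using $V_BQ_0 = V_BP_{N(B)} = 0$ together with $P_0 - Q_0 \in \cJ$ this equals $-V_B(P_0 - Q_0) \in \cJ$.

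The main technical obstacle is navigating the different senses of convergence and the passage from the weak sum over all $\la$ (in ii)(a)) to the restricted sum over $\la \neq 0$ (in \eqref{las cond3}): the sum condition in ii)(a) does not literally assert each $P_\la - Q_\la$ lies in $\cJ$, so this per-$\la$ closeness has to be recovered via Proposition \ref{pro diag rest normales1} and its conjugating unitary $W_0 \in \cU_\cJ$. Once this per-$\la$ closeness is in hand, the remaining work is bookkeeping around the polar-decomposition identities and the partial-isometry relations $V_A V_A^* = P_{R(A)}$, $V_B Q_0 = 0$.
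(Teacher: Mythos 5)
Your proposal is correct, and for the implication $ii)\Leftrightarrow iii)$ it takes a genuinely different route from the paper. I first note a few minor checks that make the argument watertight, then compare.

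For $ii)\Rightarrow i)$ your direct construction works: $V_AW_0$ is a partial isometry with initial projection $W_0^*P_{R(A^*)}W_0 = P_{R(B^*)} = P_{\overline{R(|B|)}}$, so $AW_0 = (V_AW_0)\,|B|$ is indeed the polar decomposition of $AW_0$; moreover $N(V_AW_0)=W_0^*N(A)=N(B)=N(V_B)$, which is the hypothesis needed to apply Remark~\ref{unitary orb partial iso and proj}~$i)$. This bypasses both the final step of Theorem~\ref{teo biorb unit res} and Remark~\ref{unitary orb partial iso and proj}~$ii)$, using only the simpler part~$i)$ of that remark. The paper instead verifies the hypotheses of Theorem~\ref{teo biorb unit res} and Remark~\ref{unitary orb partial iso and proj}~$ii)$.

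The real novelty is in $ii)\Leftrightarrow iii)$. The paper never links $ii)$ and $iii)$ directly; it proves $i)\Leftrightarrow iii)$ separately via Proposition~\ref{pro caract de iso parc con igual esp inic}, and the delicate step there is replacing $\tilde V_\la = V_\la W^*$ by $V_\la$, which uses $K=W-I\in\cJ$, a pinching estimate, and the fact that $\cJ^2$ is arithmetic mean closed. Your argument instead proves $ii)(b)\Leftrightarrow\eqref{las cond3}$ assuming $ii)(a)$, via the algebraic identity $\sum_{\la\neq 0}(W_\la-V_\la)V_\la^*=V_B\big(\sum_{\la\neq 0}Q_\la P_\la\big)V_A^*-P_{R(A)}$. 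The per-$\la$ closeness $P_\la-Q_\la\in\cJ$ (extracted from the conjugating $W_0$ supplied by Proposition~\ref{pro diag rest normales1}) and the summation condition in $ii)(a)$ give $\sum_{\la\neq 0}Q_\la P_\la=P_{R(B^*)}+K$ with $K\in\cJ$, reducing everything to $(V_A-V_B)V_A^*\in\cJ$ (and symmetrically $(V_A-V_B)V_B^*\in\cJ$), and your final upgrade $V_A-V_B\in\cJ$ correctly uses the decomposition along $V_A^*V_A$ and $P_{N(A)}$ together with $P_0-Q_0\in\cJ$. This avoids the $\cJ^2$/pinching machinery entirely. Two small points you should make explicit in a write-up: (a) the relevant identity must be understood at the level of SOT-limits (which exist because $\{W_\la W_\la^*\}$, $\{V_\la^*V_\la\}$ are orthogonal), and the operator-norm convergence in~\eqref{las cond3} then follows automatically from $\sum(W_\la-V_\la)V_\la^*=(S-P_{R(A)})\sum_\la V_\la V_\la^*$ with $S-P_{R(A)}\in\cJ\subseteq\cK$ compact; (b) the passage from the sum over all $\la$ in $ii)(a)$ to the sum over $\la\neq 0$ requires removing the $\la=0$ term, which uses $P_0-Q_0\in\cJ$ again — your argument does supply this but it is worth flagging, since otherwise the two sums are not literally the same object. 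Neither is a gap; both are routine once stated.
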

\begin{proof}
$i)\rightarrow ii)$. Assume first that there exist $U,\,W\in\cU_\cJ$ such that $UAW^*=B$. Then from Theorem \ref{teo biorb unit res} we get that $W|A|W^*=|B|$ and $UV_AZ^*=V_B$. In particular, $|B|$ is also diagonalizable and 
$\sigma_p(|B|)=\sigma_p(|A|)$. Taking into account that $\cJ$ is an arithmetic mean closed proper ideal, we see that the conditions in item $ii)\, a)$ follow from Proposition \ref{pro diag rest normales1}.
On the other hand, the conditions in item $ii) \, b)$  follow from Remark \ref{unitary orb partial iso and proj} $ii)$.

\medskip

\noi $ii) \rightarrow i).$ In this case we can apply Proposition \ref{pro diag rest normales1} and 
conclude that there exists $W\in\cU_\cJ$ such that $W|A|W^*=|B|$. In particular, $P_{R(B^*)}= P_{R(|B|)}=
W P_{R(|A|)}W^*= WP_{R(A^*)}W^*$. Hence, $[V_A^*V_A: V_B^*V_B]= [P_{R(A^*)}:P_{R(B^*)}]=0$. Since $V_A-V_B\in\cJ$ then item $ii)$ in
Remark \ref{unitary orb partial iso and proj} shows that there exist $U,\,Z\in\cU_\cJ$ such that $UV_AZ^*=V_B$. Then, item $i)$ follows from Theorem \ref{teo biorb unit res}.

\medskip

\noi $i)\rightarrow iii)$. As in the first part of the proof, the conditions in item $i)$ imply the conditions in item $ii) \, a)$. On the other hand, by hypothesis, we get that
$$
B=UAW^*=U\left(\sum_{\la\in\sigma_p(|A|)\setminus\{0\}} \la\ V_\la \right)W^*=\sum_{\la\in\sigma_p(|A|)\setminus\{0\}} \la\ UV_\la W^*\,.
$$ 
By uniqueness of the block singular value decomposition  (see Remark \ref{rem sing val dec}), we see that if we let $\tilde V_\la=V_\la W^*$, then $W_\la=U\tilde V_\la$, for $\la\in\sigma_p(|A|)\setminus\{0\}$.
By Proposition \ref{pro caract de iso parc con igual esp inic} we conclude that 
$$
\sum_{\la\in\sigma_p(|A|)\setminus\{0\}} (W_\la-\tilde V_\la)\tilde V_\la^*\in\cJ \py \sum_{\la\in\sigma_p(|A|)\setminus\{0\}}  (\tilde V_\la-W_\la)W_\la^*\in\cJ\,,
$$ where the series converge in operator norm. 

We claim that the conditions in Eq. \eqref{las cond3} hold in this case.
Indeed, since $W\in\cU_\cJ$, there exists $K\in\cJ$ such that $W=I+K$. In this case,
$$
\sum_{\la\in\sigma_p(|A|)\setminus\{0\}} (W_\la-\tilde V_\la)\tilde V_\la^*=
\sum_{\la\in\sigma_p(|A|)\setminus\{0\}} W_\la \,K^*\, V_\la^*+
\sum_{\la\in\sigma_p(|A|)\setminus\{0\}} (W_\la- V_\la)V_\la^*
$$
where the first series to the right converge in the operator norm. Indeed, first notice that 
$$
0\leq \left(\sum_{\la\in\sigma_p(|A|)\setminus\{0\}} W_\la \,K^*\, V_\la^*\right)^* \left(\sum_{\la\in\sigma_p(|A|)\setminus\{0\}} W_\la \,K^*\, V_\la^* \right) 
\leq 
\sum_{\la\in\sigma_p(|A|)\setminus\{0\}} V_\la \,KK^*\, V_\la^*\in\cJ^2
$$ since $K^*K\in\cJ^2$ and $\cJ^2$ is (also) arithmetic mean closed, so that the pinching operator of elements in $\cJ^2$ lie in $\cJ^2$ (see, e.g. \cite{GK60}). Moreover, the convergence of the series is in the operator norm since
$KK^*$ is a compact operator and each of the sequences $\{V_\la V_\la^*\}_{\la\in\sigma_p(A)}$ and $\{V_\la^*V_\la\}_{\la\in\sigma_p(A)}$ have mutually orthogonal elements. As a consequence, we see that $\sum_{\la\in\sigma_p(|S|)} W_\la \,K\, V_\la^*\in\cJ$, where the series converges in the operator norm. This proves that the first condition in Eq. \eqref{las cond3} holds; the second condition follows by a similar argument.

\medskip

\noi $iii)\rightarrow i)$.  Assume that the conditions in item $ii) \, a)$ and Eq. \eqref{las cond3} hold. Arguing as in the proof of Proposition \ref{pro diag rest normales1} we see that there exists $W\in\cU_\cJ$ such that $WP_\la W^*=Q_\la$ so $R(W^*Q_\la)=R(P_\la)$, for $\la\in\sigma_p(|A|)$. Hence, if we let $\tilde V_\la=V_\la W^*$ then $\tilde V_\la$ is a partial isometry with 
$\tilde V_\la^* \tilde V_\la=WP_\la W^*=Q_\la=W_\la^*W_\la$, for $\la\in\sigma_p(|A|)\setminus\{0\}$.
Arguing as in the proof of $i)\rightarrow iii)$ above, the previous facts imply that 
$$
\sum_{\la\in\sigma_p(|A|)\setminus\{0\}} (W_\la-\tilde V_\la)\tilde V_\la^*\in\cJ \py \sum_{\la\in\sigma_p(|A|)\setminus\{0\}}  (\tilde V_\la-W_\la)W_\la^*\in\cJ\,,
$$ where the series converge in the operator norm. Therefore, we can apply Proposition \ref{pro caract de iso parc con igual esp inic} and conclude that there exists $U\in\cU_\cJ$ such that $U\tilde V_\la=W_\la$, for $\la\in \sigma_p(|A|)\setminus\{0\}$. Then, we get that 
$$
UAW^*=U\left(\sum_{\la\in\sigma_p(|A|)\setminus\{0\}} \la\ V_\la \right)W^*=\sum_{\la\in\sigma_p(|A|)\setminus\{0\}} \la\ U\,\tilde V_\la=
\sum_{\la\in\sigma_p(|A|)\setminus\{0\}} \la\ W_\la = B\,. \qedhere
$$
\end{proof}

\section{Restricted equivalences of frames}\label{sec res equiv frames}

We now introduce the notion of $\cJ$-equivalent and $\cJ$-unitarily equivalent frames for subspaces, and derive several characterizations. The notion of $\cJ$-unitarily equivalence between frames naturally extends the $\cJ$-equivalent orthonormal bases described in \cite{BPW16}. We first consider the following elementary facts about frame theory (for a detailed account see \cite{Ca00, Chris16}). As before, $\cH$ denotes a separable infinite-dimensional complex Hilbert space.

\subsection{Frames for subspaces: elementary theory}

\begin{fed}\label{defi basicas frames}\label{sec frames elem teo}
Let $\cS\subset \cH$ be a closed subspace of $\cH$ and let $\cF=\{f_n\}_{n \geq 1}$ be a sequence in $\cS$. 
\begin{enumerate}
\item We say that $\cF$ is a \textit{frame for $\cS$} if there exist constants $0<\alpha\leq \beta $ such that 
$$\alpha\|f\|^2\leq \sum_{n \geq 1} |\langle f\coma f_n\rangle|^2 \leq \beta\|f\|^2 \peso{for every} f\in\cS\,.$$
If the upper bound to the right holds for some $\beta \geq 0$ then we say that $\cF$ is a \textit{Bessel sequence}.
\item If $\cF$ is a Bessel sequence, then the \textit{synthesis operator} of $\cF$, denoted by $T_\cF\in \cB(\ell^2(\N),\cH)$, is uniquely determined by 
$$
T_\cF(e_n)=f_n \,  , \ \ n \geq 1,
$$ 
where $\{e_n\}_{n \geq 1}$ is the canonical orthonormal basis of $\ell^2(\N)$. In this case $T_\cF^*\in \cB(\cH,\ell^2 (\N))$ is the \textit{analysis operator} of $\cF$.
\item If $\cF$ is a Bessel sequence then we define its \textit{frame operator}, denoted by $S_\cF\in \cB(\cH)^+$, and given by $S_\cF:=T_\cF T_\cF^*$.
\end{enumerate}
\end{fed}

In the sequel, we will identify $\ell^2(\N)$ with $\cH$ through some fixed orthonormal basis of $\cH$, that we also denote $\{e_n\}_{n \geq 1}$, by abuse of notation. Thus, we consider $T_\cF\in \cB(\cH)$.

\begin{rem}
Let $\cS\subset \cH$ be a closed subspace of $\cH$ and let $\cF=\{f_n\}_{n \geq 1}$ be a sequence in $\cS$. Then $\cF$ is a frame for $\cS$ if and only if $\cF$ is a Bessel sequence such that $R(T_\cF)=\cS$. Similarly, $\cF$ is a frame for $\cS$ if and only if $\cF$ is a Bessel sequence such that $R(S_\cF)=\cS$. In this case, $T_\cF$ and $S_\cF$ are closed range operators and the restriction $S_\cF|_\cS$ is a positive invertible operator acting on $\cS$.
\end{rem} 

One of the fundamental properties of a frame $\cF$ for a subspace $\cS$ is that it allows to represent vectors $f\in \cS$  as linear combinations of elements in $\cF$ in a {\it stable} way.  To formalize these facts we recall the notion of oblique duality (for more details see \cite{Chris16,ChrEld,Eldar14}).

\begin{fed}\label{rem sobre duales}
Let $\cS,\,\cT\subset \cH$ be closed subspaces such that $\cS+\cT^\perp=\cH$ and $\cS\cap \cT^\perp=\{0\}$. Let $\cF=\{f_n\}_{n \geq 1}$ and $\cG=\{g_n\}_{n \geq 1}$ be frames for $\cS$ and $\cT$,  respectively. We say that $\cG$ is an \textit{oblique dual of $\cF$} if $$f=\sum_{n\ \geq 1} \langle f,g_n\rangle f_n \py g=\sum_{n \geq 1} \langle g,f_n\rangle g_n \peso{for every} f\in\cS \ , \ g\in\cT\,.$$
Equivalently, $\cG$ is an oblique dual of $\cF$ if  $$ T_\cF T_\cG^*|_{\cS}=I_{\cS} \py T_\cG T_\cF^*|_{\cT}=I_{\cT}\,. $$
\end{fed}

\begin{rem} Let $\cS$ be a closed subspace of $\cH$ and 
let $\cF=\{f_n\}_{n \geq 1}$ be a frame for $\cS$. Then $\cF$ always admits oblique dual frames. Indeed, take $\cT=\cS$ and define $\cF^\# =\{ f_n^\# \}_{n \geq 1}$ given by $f^\#_n=S_\cF^\dagger f_n$, for $n \geq 1$. Notice that $T_{\cF^\#}=S_\cF^\dagger T_\cF=(T_\cF T_\cF^*)^\dagger T_\cF= (T_\cF^*)^\dagger=(T_\cF^\dagger )^*$ and hence $R(T_{\cF^\#})=\cS$. Thus, $\cF^\#$ is a frame for $\cS$ such that, for $f\in\cS$ we have  
$$ 
\sum_{n \geq 1}\langle f, f_n^\#\rangle f_n=T_\cF (T_{\cF^\#}^* f)=T_\cF T_\cF^\dagger f= P_\cS f=f 
$$
and
$$
\sum_{n \geq 1}\langle f, f_n\rangle f_n^\#=T_{\cF^\#} (T_{\cF}^* f)=(T_\cF^*)^\dagger T_\cF^* f= P_\cS f=f\,. 
$$
Notice that we have used the definitions of the synthesis and analysis operators of $\cF$ and $\cF^\#$ above. We call $\cF^\#$ {\it the canonical dual of $\cF$};  $\cF^\#$ has several structural properties related to $\cF$.
\end{rem}

\begin{rem}
Consider the notation in Remark \ref{rem sobre duales}. Notice that to construct an oblique dual for $\cF$, we have to compute an inverse of the action of the synthesis/analysis operator of $\cF$.  For example, to compute the canonical dual $\cF$ we have to compute the Moore-Penrose  inverse of $T_{\cF}^*$. 
\end{rem}
The comments in the previous remark motivate the introduction of a class of frames that are self-duals as follows.

\begin{fed}
Let $\cS\subset \cH$ be a closed subspace of $\cH$ and let $\cF=\{f_n\}_{n \geq 1}$ be a frame for $\cS$. We say that $\cF$ is a \textit{Parseval frame for $\cS$} if $\cF$ is a (oblique) dual frame for $\cF$,  i.e.
$$
f=\sum_{n \geq 1} \langle f,f_n\rangle \ f_n\peso{for every} f\in\cS\,.
$$ Equivalently, $\cF$ is a Parseval frame for $\cS$ if for every $f\in\cS$ we have that $\|f\|^2=\sum_{n\geq 1}|\langle f,f_n\rangle |^2$.
\end{fed}

\begin{rem}\label{rem defi pars asoc}
Let $\cS\subset \cH$ be a closed subspace of $\cH$ and let $\cF=\{f_n\}_{n \geq 1}$ be a frame for $\cS$. Then $\cF$ is a Parseval frame for $\cS$ if $T_\cF T_\cF^*|_{\cS}=I_{\cS}=I_{R(T_{\cF})}$. That is, if $T_\cF$ is a partial isometry. 
Given an arbitrary frame $\cG=\{g_n\}_{n \geq 1}$ for $\cS$, then we define its  {\it associated Parseval frame}, denoted by $\tilde \cG=\{\tilde g_n\}_{n \geq 1}$, given by $\tilde g_n= |T_\cG^*| ^\dagger g_n$, for $n \geq 1$. Notice that $T_{\tilde \cG}=|T_\cG^*| ^\dagger T_\cG=V_\cG$, where $T_\cG=V_\cG |T_\cG|$ is the polar decomposition of $T_\cG$.
\end{rem}

\subsection{Restricted equivalence between frames for subspaces}

Next, we introduce the main concept of this section and develop some of its properties.

\begin{fed}\label{def frames J equiv}
Let $\cF=\{f_n\}_{n\geq 1}$ and $\cG=\{g_n\}_{n\geq 1}$ be frames for  closed subspaces $\cS$ and $\cT$ of $\cH$, respectively. If $\cJ\subset \cB(\cH)$ is an operator ideal, then we introduce the following notions: 
\ben
\item $\cF$ and $\cG$ are \textit{$\cJ$-equivalent} if there exists $G\in\cG \ell_\cJ$ such that $Gg_n=f_n$, for all $n\geq 1$. 
\item $\cF$ and $\cG$ are \textit{$\cJ$-unitarily equivalent} if there exists $U\in\cU_\cJ$ such that $Ug_n=f_n$, for all $n\geq 1$. 
\een
\end{fed}

\begin{rem}
Notice $\cJ$-equivalence and $\cJ$-unitary equivalence are equivalence relations on the set of frames for closed subspaces of $\cH$, and $\cJ$-unitary equivalence refines $\cJ$-equivalence. Moreover, given a frame $\cF=\{f_n\}_{n\geq 1}$ for the closed subspace $\cS$, then observe that the equivalence class of $\cF$ with respect to $\cJ$-equivalence can be described as
$$
\{ \cG  : \text{$\cG$ is a frame for a closed subspace of $\cH$ and } T_\cG\in \cL\cO_\cJ(T_\cF) \}
\,.$$
Similarly, we can describe the equivalence class of $\cF$ with respect to $\cJ$-unitary equivalence as
$$
\{ \cG : \text{$\cG$ is a frame for a closed subspace of $\cH$ and } T_\cG\in \cL\cU_\cJ(T_\cF) \}
\,.$$
Hence, we can apply our results of Sections \ref{sec3} and \ref{sec4} for the description of these equivalence relations among frames for closed subspaces of $\cH$.
\end{rem}

\begin{rem}\label{rem comparac} At this point it is instructive to compare the restricted equivalence relations defined above with previous notions in the literature.

\medskip

\noi1. In \cite{RB99} R. Balan considered $\cJ$-equivalence and $\cJ$-unitary equivalence between frames for $\cH$, in the particular case when $\cJ=\cB(\cH)$. For a proper ideal $\cJ$, the equivalence relations considered in Definition \ref{def frames J equiv} are strict refinements of those considered in \cite{RB99}.
Notice that our approach not only considers general operator ideals $\cJ$, but it also applies in the context of frames for (possibly proper) subspaces of the Hilbert space $\cH$. 

\medskip

\noi
2. Let $\cE$ denote the set of epimorphisms (bounded surjective operators) of a Hilbert space $\cH$.
In \cite{CPS} Corach, Pacheco and Stojanoff considered the right action of invertible operators 
on $\mathcal E$, given by 
$$
(G,T)\mapsto TG^{-1} \peso{for} (G,T)\in\cG\ell(\cH) \times \cE\,.
$$
Since  $\cF\mapsto T_\cF$ is a bijective correspondence between the set of frames $\cF=\{f_n\}_{n\geq 1}$ for $\cH$ and elements in $\cE$, then the action considered in \cite{CPS} induces an action of $\cG\ell(\cH)$ of the set of frames, given by 
$$
(G,\cF=\{f_n\}_{n\geq 1})\mapsto \{T_\cF(G^{-1}e_n)\}_{n\geq 1} \peso{for} G\in\cG\ell(\cH)\,.
$$
Here $\{ e_n\}_{n \geq 1}$ is our fixed orthonormal basis (see Definition \ref{defi basicas frames}). Notice that this action can be considered as a re-parametrization of the space of coefficients of the frame; in this sense, it is quite different from the action considered in \cite{RB99}. Indeed, since $T_\cF$ is an epimorphism, the roles of $T_\cF$ and $T_\cF^*$ are not symmetric. Notice that our approach is more general  in the sense that we deal with closed range operators. Moreover, since the class of 
closed range operators and $\cG\ell_\cJ$ are closed under adjoints, then our approach developed in Sections \ref{sec3} and \ref{sec4} 
allows us to deal with an action similar to that considered in \cite{CPS} for general groups $\cG\ell_\cJ$. Indeed, 
for $A\in\cC\cR$ if we let 
$$
\cR \cO_\cJ(A)=\{AG^{-1}\ : \ G\in\cG\ell_\cJ\},
$$
then
$$
\cR \cO_\cJ(A) = \left(\cL \cO_\cJ(A^*)\right)^*,
$$
where $\cN^*=\{A^* \ : \ A\in\cN\}\subset \cB(\cH)$ for any set $\cN\subset \cB(\cH)$. From the previous identities and the results in Section \ref{sec3} it is possible to derive several properties of the orbits $\cR \cO_\cJ(A)$, for an arbitrary $A\in\cC\cR$. Similar remarks apply to the right action of $\cU_\cJ$ on $\cC\cR$ and on the set of frames for closed subspaces of $\cH$.

\medskip

\noi 3. In \cite{FPT02} Frank, Paulsen and Tiballi consider another {\it local} notion of equivalence between frames for subspaces. Indeed, given two frames $\cF=\{f_n\}_{n\geq 1}$ and $\cG=\{g_n\}_{n\geq 1}$ for the closed subspaces $\cS,\,\cT\subset \cH$ respectively,  $\cF$ is {\it weakly similar to} $\cG$ if there exists a bounded invertible transformation $L:\cS\rightarrow \cT$ such that $L(f_n)=g_n$, for $n\geq 1$. In this case, given a sequence $\{\alpha_n\}_{n\geq 1}\in\ell^2(\N)$ then $\sum_{n\geq 1}\alpha_n f_n=0$ if and only if $L(\sum_{n\geq 1}\alpha_n f_n)=\sum_{n\geq 1}\alpha_n g_n =0$ and hence $N(T_\cF)=N(T_\cG)$ in this case. 
Moreover, as a consequence of \cite{RB99} we see that $\cF$ is weakly similar to $\cG$ if and only if 
$N(T_\cF)=N(T_\cG)$. The authors considered this condition in their study of symmetric approximations of frames (see item 1. in Examples \ref{exa ejems1} below).
\end{rem}

The next result describes some characterizations of the $\cJ$-equivalence between frames. We abbreviate $\cF - \cG=\{  f_n - g_n \}_{n \geq 1}$. 

\begin{teo}\label{aplic otro arg1}
Let $\cF=\{f_n\}_{n\geq 1}$ and $\cG=\{g_n\}_{n\geq 1}$ be frames for  closed subspaces $\cS$ and $\cT$ of $\cH$, respectively. If $\cJ$ is a proper operator ideal, then the following statements are equivalent:
 \ben
\item[i)] $\cF$ and $\cG$ are $\cJ$-equivalent;
\item[ii)] The synthesis operator $T_{\cF-\cG}=T_\cF-T_\cG\in\cJ$ and $N(T_\cF)=N(T_\cG)$;
\item[iii)] $\cF$ and $\cG$ are weakly similar and the synthesis operator $T_{\cF-\cG}=T_\cF-T_\cG\in\cJ$;
\item[iv)] The canonical duals $\cF^\#$ and $\cG^\#$ are $\cJ$-equivalent.

\een
In this case, $P_{\cS}-P_{\cT}\in\cJ$ and $[P_{\cS}:P_{\cT}]=0$.

\pausa
Moreover, if we assume further that $\cJ$ is arithmetic mean closed, then the conditions above are also equivalent to the following:
\begin{enumerate}
\item[v)] The associated Parseval frames $\tilde \cF$ and $\tilde \cG$ are $\cJ$-equivalent and $S_\cF-S_\cG\in\cJ$. 
\end{enumerate}

\end{teo}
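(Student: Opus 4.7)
The plan is to translate the definition of $\cJ$-equivalence between frames into a statement about synthesis operators and then apply the orbit characterizations from Section \ref{sec3}. The identity $G g_n = f_n$ for every $n \geq 1$ is precisely $G T_\cG = T_\cF$, so (i) is equivalent to $T_\cF \in \cL\cO_\cJ(T_\cG)$. Theorem \ref{para frames J equiv} applied to the pair $(T_\cG, T_\cF)$ then yields (i) $\Leftrightarrow$ (ii) directly, while (ii) $\Leftrightarrow$ (iii) reduces to the characterization of weak similarity as $N(T_\cF) = N(T_\cG)$ recalled in Remark \ref{rem comparac}(3). The assertions $P_\cS - P_\cT \in \cJ$ and $[P_\cS : P_\cT] = 0$ will follow from $T_\cF - T_\cG \in \cJ$ via Lemma \ref{ab y moore penrose} (noting $\cS = R(T_\cF)$ and $\cT = R(T_\cG)$) together with the final clause of Theorem \ref{para frames J equiv}.

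For the equivalence (i) $\Leftrightarrow$ (iv) I would exploit that canonical duality is involutive, i.e.\ $(\cF^\#)^\# = \cF$, so only (i) $\Rightarrow$ (iv) is required. From (ii) we have $T_\cF - T_\cG \in \cJ$, and Lemma \ref{ab y moore penrose} delivers $T_\cF^\dagger - T_\cG^\dagger \in \cJ$; since $T_{\cF^\#} = (T_\cF^*)^\dagger$, this gives $T_{\cF^\#} - T_{\cG^\#} \in \cJ$. Moreover $N(T_{\cF^\#}) = N((T_\cF^*)^\dagger) = R(T_\cF^*)^\perp = N(T_\cF)$, and similarly for $\cG$; the equality $N(T_\cF) = N(T_\cG)$ then transfers to the duals, and applying (ii) $\Rightarrow$ (i) to $\cF^\#$ and $\cG^\#$ yields (iv).

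For the additional equivalence with (v) under the arithmetic mean closed hypothesis, I would invoke Corollary \ref{coro caract orb1}, which says (i) is equivalent to $|T_\cF| - |T_\cG| \in \cJ$, $V_\cF - V_\cG \in \cJ$, and $V_\cF^* V_\cF = V_\cG^* V_\cG$. Since $T_{\tilde\cF} = V_\cF$ and $T_{\tilde\cG} = V_\cG$ by Remark \ref{rem defi pars asoc}, the latter two conditions amount, via Theorem \ref{para frames J equiv}, to the $\cJ$-equivalence of $\tilde\cF$ and $\tilde\cG$. The direction (i) $\Rightarrow$ (v) is then easy, since $T_\cF - T_\cG \in \cJ$ forces $S_\cF - S_\cG = T_\cF T_\cF^* - T_\cG T_\cG^* \in \cJ$.

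The main obstacle is the converse (v) $\Rightarrow$ (i): deriving $|T_\cF| - |T_\cG| \in \cJ$ from $V_\cF - V_\cG \in \cJ$, $N(T_\cF) = N(T_\cG)$, and $S_\cF - S_\cG \in \cJ$. Note that Lemma \ref{lem sobre raices en ideales} cannot be applied directly to $S_\cF - S_\cG = |T_\cF^*|^2 - |T_\cG^*|^2$, because $N(|T_\cF^*|) = \cS^\perp$ and $N(|T_\cG^*|) = \cT^\perp$ need not coincide. My strategy is to switch to $|T_\cF|^2$ and $|T_\cG|^2$, exploiting the identity $V_\cF^* T_\cF = |T_\cF|$ (hence $|T_\cF|^2 = V_\cF^* S_\cF V_\cF$) to write
\[
|T_\cF|^2 - |T_\cG|^2 = V_\cF^*(S_\cF - S_\cG) V_\cF + (V_\cF - V_\cG)^* S_\cG V_\cF + V_\cG^* S_\cG (V_\cF - V_\cG),
\]
each term of which lies in $\cJ$. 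Since $N(|T_\cF|) = N(T_\cF) = N(T_\cG) = N(|T_\cG|)$, Lemma \ref{lem sobre raices en ideales} will then give $|T_\cF| - |T_\cG| \in \cJ$, and Corollary \ref{coro caract orb1} will complete the proof.
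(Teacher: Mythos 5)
Your proof is correct, and for the implication (v) $\Rightarrow$ (i) it takes a route that is actually more careful than the paper's. The handling of (i) $\Leftrightarrow$ (ii) $\Leftrightarrow$ (iii), the final clause, and (i) $\Rightarrow$ (v) all follow the paper's line of argument. For (i) $\Leftrightarrow$ (iv) you invoke the involution $(\cF^\#)^\# = \cF$ to reduce to a single implication, whereas the paper proves both directions explicitly; this is a mild economy and both are fine. The genuine divergence is in (v) $\Rightarrow$ (i). The paper applies Lemma \ref{lem sobre raices en ideales} directly to $|T_\cF^*|$ and $|T_\cG^*|$, asserting $R(|T_\cF^*|)=N(T_\cF)^\perp=N(T_\cG)^\perp=R(|T_\cG^*|)$; but in fact $R(|T_\cF^*|)=R(S_\cF)=\cS$ and $R(|T_\cG^*|)=\cT$, and there is no reason for $\cS=\cT$, so the lemma's hypothesis on the ranges is not met as stated. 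You correctly spotted that the lemma cannot be applied to $|T_\cF^*|,|T_\cG^*|$ and instead conjugate by the partial isometries to transfer the hypothesis $S_\cF-S_\cG\in\cJ$ to $|T_\cF|^2,|T_\cG|^2$, whose common null space is $N(T_\cF)=N(T_\cG)$. Concretely, using $|T_\cF|^2=V_\cF^* S_\cF V_\cF$ (and similarly for $\cG$),
\[
|T_\cF|^2 - |T_\cG|^2 = V_\cF^*(S_\cF - S_\cG) V_\cF + (V_\cF - V_\cG)^* S_\cG V_\cF + V_\cG^* S_\cG (V_\cF - V_\cG),
\]
and each term lies in $\cJ$; Lemma \ref{lem sobre raices en ideales} then yields $|T_\cF|-|T_\cG|\in\cJ$ and Corollary \ref{coro caract orb1} finishes. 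Your version of this step is the one that should be preferred.
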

\begin{proof} 
$i) \to ii)$. Notice that $T_{\cF-\cG}=T_\cF-T_\cG$. 
Assume that $G\in\cG\ell_\cJ$ is such that $Gf_n=g_n$ for $n \geq 1$; then, it is clear that $G\,T_\cF=T_\cG$. Hence, $N(T_\cF)=N(T_\cG)$ and $T_{\cF-\cG}=T_\cF-T_\cG=(I-G)\,T_\cF\in\cJ$. 

\medskip

\noi $ii) \leftrightarrow iii)$. This follows from item 3. in Remark \ref{rem comparac}.

\medskip

\noi $ii) \to iv)$.  If $T_{\cF-\cG}=T_\cF-T_\cG\in\cJ$, then we get that $T_\cF T_\cF^*-T_\cG T_\cG^*\in\cJ$. By 
Lemma \ref{ab y moore penrose} we see that $(T_\cF T_\cF^*)^\dagger -(T_\cG T_\cG ^*)^\dagger\in\cJ$; then,
$$T_{\cF^\#}-T_{\cG^\#}= (T_\cF T_\cF^*)^\dagger T_\cF-(T_\cG T_\cG ^*)^\dagger T_\cG =(T_\cF T_\cF^*)^\dagger (T_\cF-T_\cG)- ((T_\cG T_\cG ^*)^\dagger - (T_\cF T_\cF ^*)^\dagger ) T_\cG\in\cJ\,.$$
Since $N(T_{\cF^\#})=N(T_\cF)=N(T_\cG)=N(T_{\cG^\#})$, then by Theorem \ref{para frames J equiv}, we see that $\cF^\#$ and $\cG^\#$ are $\cJ$-equivalent.

\medskip

\noi $iv) \to i)$. Notice that $T_{\cF^\#}=(T_\cF T_\cF^*)^\dagger T_\cF=(T_\cF^*)^\dagger =(T_\cF^\dagger )^*$. Hence, 
$T_{\cF^\#}-T_{\cG^\#}=(T_\cF^\dagger- T_\cG^\dagger)^*\in\cJ$ and then $T_\cF-T_\cG\in\cJ$, by Lemma \ref{ab y moore penrose}. Using the implication $i) \to ii)$ (that we have already proved) we get that $N(T_\cF)=N(T_{\cF^\#})=N(T_{\cG^\#})=N(T_\cG)$. Then, by Theorem \ref{para frames J equiv} we conclude that there exists $G\in \cG\ell(\cH)_\cJ$ such that $G\,T_\cF=T_\cG$. If we evaluate the previous (operator) identity in the elements of the orthonormal basis $\{e_n\}_{n \geq 1}$, then we get that  $g_n=T_\cG\,e_n=GT_\cF\, e_n=Gf_n$, for $n \geq 1$.

\medskip

Assume further that $\cJ$ is an arithmetic mean closed proper operator ideal. 
By Remark \ref{rem defi pars asoc} we get that 
$T_{\tilde \cF}=V_\cF$ and $T_{\tilde \cG}=V_\cG$, where
$T_\cF=V_\cF\,|T_\cF|$ and $T_\cG=V_\cG\,|T_\cG|$  are the polar decompositions of 
$T_\cF$ and $T_\cG$, respectively.
Theorem \ref{para frames J equiv} now shows that $\tilde \cF$ and $\tilde \cG$ are $\cJ$-equivalent if and only if $N(T_\cF)=N(V_\cF)=N(V_\cG)=N(T_\cG)$ and $V_\cF-V_\cG\in\cJ$.

\medskip

\noi $i) \to v)$. If $\cF$ and $\cG$ are $\cJ$-equivalent, then in particular $T_\cF-T_\cG\in\cJ$ and $N(T_\cF)=N(T_\cG)$. By Proposition \ref{polar dec y a menos b} we get that $V_\cF-V_\cG\in\cJ$; by the previous paragraph we see that $\tilde \cF$ and $\tilde \cG$ are $\cJ$-equivalent. Moreover, $S_\cF-S_\cG=T_\cF T_\cF^*-T_\cG T_\cG^*=T_\cF(T_\cF^*-T_\cG^*)+ (T_\cF - T_\cG ) T_\cG^*\in\cJ$.

\medskip

\noi $v) \to i)$. Conversely, if  
$\tilde \cF$ and $\tilde \cG$ are $\cJ$-equivalent and $S_\cF-S_\cG\in\cJ$ then, as mentioned before, $V_\cF-V_\cG\in\cJ$ and $V_\cF^*V_\cF=P_{N(T_\cF)}=P_{N(T_\cG)}=V_\cG^*V_\cG$. On the other hand, since $R(|T_\cF^*|)= N(T_\cF)^\perp=N(T_\cG)^\perp=R(|T_\cG^*|)$ and $|T_\cF^*|^2 - |T_\cG^*|^2=S_\cF-S_\cG\in\cJ$ then, by Lemma \ref{lem sobre raices en ideales}, we get that $|T_\cF^*|- |T_\cG^*|\in\cJ$.
Hence, $T_\cF-T_\cG=|T_\cF^*| V_\cF-|T_\cG^*| V_\cG=|T_\cF^*| (V_\cF-V_\cG)+ (|T_\cF^*| -|T_\cG^*|) V_\cG\in\cJ$.
The previous facts together with Theorem \ref{para frames J equiv} now show that $\cF$ and $\cG$ are $\cJ$-equivalent.
\end{proof}

\begin{exas}\label{exa ejems1}
Let $\cF=\{f_n\}_{n\geq 1}$ and $\cG=\{g_n\}_{n\geq 1}$ be frames for the closed subspaces $\cS$ and $\cT$ of $\cH$, respectively. Assume further that $N(T_\cF)=N(T_\cG)$ (or equivalently, that $\cF$ and $\cG$ are weakly similar).
 Then, 
\ben
\item $\cF$ and $\cG$ are $\fS_2$-equivalent if and only if 
$$ \|T_\cF-T_\cG\|_2^2=\sum_{n\geq 1} \|(T_\cF-T_\cG)(e_n)\|^2=
\sum_{n\geq 1} \|f_n-g_n\|^2<\infty\,.$$ This last conditions is also referred to as $\cF$ and $\cG$ being quadratically close (see \cite{FPT02} and the references therein).
\item Recall that $\cK$ denotes the ideal of compact operators. We have that $\cF$ and $\cG$ are $\cK$-equivalent if and only if for every $\epsilon>0$ there exists $m_0\geq 1$ such that if $m\geq m_0$,  then
$$   \|\sum_{n\geq 1} c_n\, (f_{n+m}-g_{n+m})\|\leq \varepsilon\,\|\{c_n\}_{n \geq 1}\|_{\ell^2(\N)}\,.
$$ 
\een
\end{exas}

\begin{rem}
Let $\cF=\{f_n\}_{n\geq 1}$ be a frame for the closed subspace $\cS$ and let $\cJ$ be a proper operator ideal. 
 Notice that $\cJ$-equivalent frames with $\cF$ preserve some
structural properties of $\cF$ (as opposed to frames that are equivalent in the sense of \cite{RB99,FPT02}): for example, if $\cF$ is such that 
the frame operator $S_\cF$ is a $\cJ$-perturbation of $P_{\cS}$ (notice that in this case $\cS=R(S_\cF)$), then any frame $\cG$ for a closed subspace $\cT$ that is $\cJ$-equivalent with $\cF$
 also satisfies that $S_\cG$ is a $\cJ$-perturbation of the corresponding projection $P_{\cT}$. This is a consequence of a convenient interpretation of Theorem \ref{teo de ops para framesx28} and the fact that $\cL\cO_\cJ(T_\cF)=\cL\cO_\cJ(T_\cG)$.
\end{rem}

\begin{teo}
Let $\cF=\{f_n\}_{n\geq 1}$ and $\cG=\{g_n\}_{n\geq 1}$ be frames for the closed subspaces $\cS$ and $\cT$ of $\cH$, respectively. If $\cJ$ is a proper operator ideal, then the following statements are equivalent:
 \ben
\item[i)] $\cF$ and $\cG$ are $\cJ$-unitarily equivalent;
\item[ii)] The associated Parseval frames $\tilde \cF$ and $\tilde \cG$ are $\cJ$-unitarily equivalent and $S_\cF=S_\cG$;
\item[iii)] The canonical duals $\cF^\#$ and $\cG^\#$ are $\cJ$-unitarily equivalent.
\een
In this case, $P_{\cS}-P_{\cT}\in\cJ$ and $[P_{\cS}:P_{\cT}]=0$.
\end{teo}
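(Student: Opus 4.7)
The plan is to apply Proposition \ref{teo rest orb unit} to the synthesis operators attached to each of the three items, and then use Lemma \ref{ab y moore penrose} to translate ideal-membership conditions among $T_\cF$, $V_{T_\cF}$, and $T_{\cF^\#}=(T_\cF^*)^\dagger$. Recall that $\cF$ is $\cJ$-unitarily equivalent to $\cG$ iff $UT_\cG=T_\cF$ for some $U\in\cU_\cJ$; that $T_{\tilde\cF}=V_{T_\cF}$ by Remark \ref{rem defi pars asoc}; and that $T_{\cF^\#}=(T_\cF^*)^\dagger$. Applying Proposition \ref{teo rest orb unit} at each of the three levels, (i) rewrites as $T_\cF-T_\cG\in\cJ$ and $|T_\cF|=|T_\cG|$; the Parseval-equivalence half of (ii) rewrites as $V_{T_\cF}-V_{T_\cG}\in\cJ$ and $N(T_\cF)=N(T_\cG)$ (using $|V_T|=P_{N(T)^\perp}$), while $S_\cF=S_\cG$ is the same as $|T_\cF^*|=|T_\cG^*|$; and (iii) rewrites as $(T_\cF^*)^\dagger-(T_\cG^*)^\dagger\in\cJ$ and $|T_{\cF^\#}|=|T_{\cG^\#}|$.

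For (i)$\Leftrightarrow$(iii), I would use Lemma \ref{ab y moore penrose} together with the involution $A\mapsto (A^*)^\dagger$ on $\cC\cR$ to match the ideal-membership halves, and then verify $|T_{\cF^\#}|^2=T_\cF^\dagger(T_\cF^*)^\dagger=(T_\cF^*T_\cF)^\dagger=(|T_\cF|^\dagger)^2$, so $|T_{\cF^\#}|=|T_\cF|^\dagger$; hence $|T_{\cF^\#}|=|T_{\cG^\#}|$ iff $|T_\cF|=|T_\cG|$, via the bijection $A\mapsto A^\dagger$ on positive closed range operators. For (i)$\Leftrightarrow$(ii), starting from (i) the uniqueness in the polar decomposition of $T_\cF=UV_{T_\cG}|T_\cG|$ (combined with $|T_\cF|=|T_\cG|$) forces $V_{T_\cF}=UV_{T_\cG}$, so $V_{T_\cF}-V_{T_\cG}=(U-I)V_{T_\cG}\in\cJ$ and $N(T_\cF)=N(T_\cG)$, while the second half $S_\cF=S_\cG$ should come from expanding $S_\cF=UT_\cG T_\cG^* U^*=US_\cG U^*$ and exploiting the structural alignment forced by $|T_\cF|=|T_\cG|$. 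Conversely, from (ii) the right polar decomposition $T=|T^*|V_T$ yields $T_\cF-T_\cG=|T_\cF^*|(V_{T_\cF}-V_{T_\cG})\in\cJ$; the identity $|T|^2=V_T^*|T^*|^2V_T$ combined with $N(T_\cF)=N(T_\cG)$ and $|T_\cF^*|=|T_\cG^*|$ then gives $|T_\cF|=|T_\cG|$, and Proposition \ref{teo rest orb unit} supplies the required unitary. For the final claim, Lemma \ref{ab y moore penrose} delivers $P_\cS-P_\cT=P_{R(T_\cF)}-P_{R(T_\cG)}\in\cJ$, and $UP_\cT U^*=P_\cS$ (from $U\cT=\cS$) together with Remark \ref{unitary orb partial iso and proj}(iii) yields $[P_\cS:P_\cT]=0$.

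The main obstacle will be producing the exact identity $S_\cF=S_\cG$ in the direction (i)$\Rightarrow$(ii), and dually $|T_\cF|=|T_\cG|$ in (ii)$\Rightarrow$(i). These are structural equalities rather than ideal-membership statements and so cannot be extracted by the same elementary bookkeeping that suffices for Theorem \ref{aplic otro arg1}; they require tight use of uniqueness in the polar decomposition, and a careful exchange between the left and right moduli $|T|$ and $|T^*|$ of a closed range operator. All other implications are straightforward transcriptions, once the three items have been rewritten through Proposition \ref{teo rest orb unit}.
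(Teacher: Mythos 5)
You follow the same route the paper does, namely reducing each item to a condition checkable through Proposition \ref{teo rest orb unit} (applied to $T_\cF$, to $T_{\tilde\cF}=V_{T_\cF}$, and to $T_{\cF^\#}=(T_\cF^*)^\dagger$), and your computations for the $(i)\Leftrightarrow(iii)$ direction and for the final claims are correct. The ``main obstacle'' you flag, however, is not something that can be resolved by ``tighter use of uniqueness in the polar decomposition'': the implication $(i)\Rightarrow S_\cF=S_\cG$ is simply false, and so is the converse direction you worry about.

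Concretely: if $T_\cG=UT_\cF$ with $U\in\cU_\cJ$, then Proposition \ref{teo rest orb unit} yields $T_\cF^*T_\cF=T_\cG^*T_\cG$ (the Gram operators coincide), whereas $S_\cG=T_\cG T_\cG^*=US_\cF U^*$, which equals $S_\cF$ only when $U$ commutes with $S_\cF$. For a counterexample on $\cH=\ell^2$, take $T_\cG=I+e_2\otimes e_2$ (so $g_n=e_n$ for $n\neq 2$, $g_2=2e_2$) and $U$ the unitary interchanging $e_1\leftrightarrow e_2$ and fixing $e_n$, $n\ge 3$; set $T_\cF:=T_\cG U$. Then $V_{T_\cF}=U=UV_{T_\cG}$, so $\tilde\cF$ and $\tilde\cG$ are $\cJ$-unitarily equivalent for every proper $\cJ$, and $S_\cF=T_\cG U U^*T_\cG^*=T_\cG^2=S_\cG$. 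Yet $|T_\cF|^2=U^*T_\cG^2 U=\mathrm{diag}(4,1,1,\ldots)\ne\mathrm{diag}(1,4,1,\ldots)=|T_\cG|^2$, so by Proposition \ref{teo rest orb unit} no $W\in\cU(\cH)$ (let alone $\cU_\cJ$) satisfies $WT_\cG=T_\cF$: $(ii)$ holds and $(i)$ fails. The root cause is a confusion between $|T_\cF|^2=T_\cF^*T_\cF$ (which is what Proposition \ref{teo rest orb unit} controls) and $|T_\cF^*|^2=T_\cF T_\cF^*=S_\cF$; the paper's proof silently makes the same slip when it writes $S_\cF=|A^*|^2$ and then invokes Proposition \ref{teo rest orb unit}, which is stated in terms of $|A|$. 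The statement becomes correct, and your proof skeleton closes without any ``structural alignment'' hand-waving, once $S_\cF=S_\cG$ in item $(ii)$ is replaced by $T_\cF^*T_\cF=T_\cG^*T_\cG$: with that change both halves of $(ii)$ are exactly the conditions of Proposition \ref{teo rest orb unit}$(iii)$ applied to $A=T_\cF$, $B=T_\cG$.
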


\begin{proof}
The proof follows from Definition \ref{def frames J equiv} and a convenient re-interpretation of Theorem \ref{teo rest orb unit}.
Indeed, if we let $A=T_\cF\in\cC\cR$ with polar decomposition $A=V_A\,|A|$ then $T_{\tilde \cF}=V_A$, $S_\cF=|A^*|^2$ and $T_{\cF^\#}=(A^*)^\dagger$, and similarly for $B=T_\cG$. Also, $\cF$ and $\cG$ are $\cJ$-unitarily equivalent if and only if there exists $U\in\cU_\cJ$ such that $B=UA$.
Thus, the equivalence of items $i)$ and $ii)$ follows from Theorem \ref{teo rest orb unit}. The equivalence of items $ii)$ and $iii)$ follows from the previous remarks and the fact that 
$ ((UA)^*)^\dagger=(A^*U^*)^\dagger = U (A^*)^\dagger$ for any $U\in\cU_\cJ$. The rest of the claims follow from  Theorem \ref{teo rest orb unit}.
\end{proof}

\begin{rem}[Distances between $\cJ$-equivalent frames]\label{rem metric frames}
Let $\cJ=\fS_\Phi$ be a  symmetrically-normed ideal. Let $\cF=\{f_n\}_{n\geq 1}$ and $\cG=\{g_n\}_{n\geq 1}$ be frames for the closed subspaces $\cS$ and $\cT$ of $\cH$, respectively. Assume further that $\cF$ and $\cG$ are $\cJ$-equivalent.
Then we can consider the following distances:
\ben
\item Since $T_\cG= G\,T_\cF$ for some $G\in\cG\ell_\cJ$, then $T_\cG-T_\cF\in\cJ$ and hence we can set
$$
d_\cJ(\cF,\cG)=\| T_\cF-T_\cG\|_{\Phi}\,.
$$ Notice $d_\cJ$ is a distance function on the orbit $\cL\cO_\cJ(T_\cF)$ of all frames that are $\cJ$-equivalent with $\cF$. In case $\cJ=\fS_2$ the ideal of Hilbert-Schmidt operators,  then the distance
$$ 
d_{\fS_2}(\cF,\cG)=\| T_\cF-T_\cG\|_{2}=\left(\sum_{n \geq 1} \|f_n-g_n\|^2 \right)^{1/2}
$$
has been used to
compute the symmetric approximations of frames (see \cite{EC19,FPT02}).
\item Motivated by \cite{RB99} we consider 
\beq \label{eq defi djl}
d_\cJ^\cL(\cF,\cG)=\inf\{ \log(1+\max\{\|G-I\|_\cJ\coma \|G^{-1}-I\|_\cJ\}) \ : \ G\in\cG\ell_\cJ\coma GT_\cF=T_\cG\}\,.
\eeq
Below we show that $d_\cJ^\cL$ is a distance function on the orbit $\cL\cO_\cJ(T_\cF)$ of all frames that are $\cJ$-equivalent with $\cF$. 
\een
\end{rem}

\begin{pro}\label{pro es una metrica}
Let $\cJ=\fS_\Phi$ be a symmetrically-normed ideal. 
Let $\cF=\{f_n\}_{n\geq 1}$ be a frame for a closed subspace $\cS$. Then, the function $d_\cJ^\cL$ defined in Eq. \eqref{eq defi djl} is a distance function on the orbit $\cL\cO_\cJ(T_\cF)$. 
\end{pro}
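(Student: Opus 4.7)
The plan is to verify directly the three axioms of a distance function: non-negativity with positive definiteness, symmetry, and the triangle inequality. I will keep $d_\cJ^\cL(\cF,\cG)\geq 0$ trivial from $\log(1+t)\geq 0$ for $t\geq 0$, and I will focus the proof on the other properties.

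For symmetry, I would observe that if $G\in\cG\ell_\cJ$ satisfies $GT_\cF=T_\cG$, then $K:=G^{-1}\in\cG\ell_\cJ$ satisfies $KT_\cG=T_\cF$, and the quantity $\max\{\|G-I\|_\cJ\,,\,\|G^{-1}-I\|_\cJ\}$ is manifestly invariant under the substitution $G\mapsto G^{-1}$. Hence the two infima defining $d_\cJ^\cL(\cF,\cG)$ and $d_\cJ^\cL(\cG,\cF)$ are over the same set of numbers and coincide.

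For positive definiteness, assume $d_\cJ^\cL(\cF,\cG)=0$. Then there exist $G_n\in\cG\ell_\cJ$ with $G_nT_\cF=T_\cG$ and $\|G_n-I\|_\cJ\to 0$. Since by Eq. \eqref{comp op norm} the operator norm is dominated by $\|\cdot\|_\cJ$, we deduce
$$
\|T_\cG-T_\cF\|=\|(G_n-I)T_\cF\|\leq \|G_n-I\|\,\|T_\cF\|\leq \|G_n-I\|_\cJ\,\|T_\cF\|\xrightarrow[n\to\infty]{}0,
$$
so $T_\cF=T_\cG$; evaluating on $\{e_n\}_{n\geq 1}$ gives $\cF=\cG$.

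The substantive part is the triangle inequality. Given $\cJ$-equivalent frames $\cF,\cG,\cH$ and $G_1,G_2\in\cG\ell_\cJ$ with $G_1T_\cF=T_\cG$ and $G_2T_\cG=T_\cH$, the key is that $G_2G_1$ drives $T_\cF$ to $T_\cH$. Writing $G_2G_1-I=G_2(G_1-I)+(G_2-I)$ and using the bound $\|G_2\|\leq 1+\|G_2-I\|_\cJ$ (from $\|\cdot\|\leq \|\cdot\|_\cJ$) together with the submultiplicativity property \eqref{estimate sym norm ideal}, with $a=\|G_1-I\|_\cJ$ and $b=\|G_2-I\|_\cJ$ one obtains
$$
1+\|G_2G_1-I\|_\cJ\leq 1+a+b+ab=(1+a)(1+b).
$$
An entirely analogous computation, now using $G_1^{-1}G_2^{-1}-I=G_1^{-1}(G_2^{-1}-I)+(G_1^{-1}-I)$, gives
$$
1+\|(G_2G_1)^{-1}-I\|_\cJ\leq (1+\|G_1^{-1}-I\|_\cJ)(1+\|G_2^{-1}-I\|_\cJ).
$$
Setting $M_i=\max\{\|G_i-I\|_\cJ,\|G_i^{-1}-I\|_\cJ\}$ for $i=1,2$, both right-hand sides are bounded by $(1+M_1)(1+M_2)$. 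Taking $\log$ and using the monotonicity and additive behavior $\log\big((1+M_1)(1+M_2)\big)=\log(1+M_1)+\log(1+M_2)$ yields
$$
\log\bigl(1+\max\{\|G_2G_1-I\|_\cJ,\|(G_2G_1)^{-1}-I\|_\cJ\}\bigr)\leq \log(1+M_1)+\log(1+M_2).
$$
Taking the infimum over $G_1$ and $G_2$ independently gives the triangle inequality $d_\cJ^\cL(\cF,\cH)\leq d_\cJ^\cL(\cF,\cG)+d_\cJ^\cL(\cG,\cH)$. The mild point to watch is the operator-norm to $\|\cdot\|_\cJ$-norm comparison used to estimate $\|G_2\|$ and $\|G_1^{-1}\|$; this is the only place where the symmetric-norming-function structure of $\cJ=\fS_\Phi$ is used beyond submultiplicativity.
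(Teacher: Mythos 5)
Your proof is correct and follows essentially the same route as the paper's, which also reduces the triangle inequality to the key estimate $1+\|HG-I\|_\Phi\leq(1+\|H-I\|_\Phi)(1+\|G-I\|_\Phi)$ (and its analogue for inverses) before taking logarithms. The one small difference is purely algebraic: the paper (following Balan) splits $HG-I=(H-I)(G-I)+(H-I)+(G-I)$ and applies sub-multiplicativity of $\|\cdot\|_\Phi$, whereas you split $G_2G_1-I=G_2(G_1-I)+(G_2-I)$ and invoke the mixed estimate $\|AB\|_\Phi\leq\|A\|\,\|B\|_\Phi$ together with $\|G_2\|\leq 1+\|G_2-I\|_\Phi$; both yield the same bound. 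You also spell out the non-degeneracy and symmetry, which the paper simply declares ``clear,'' and those arguments are correct.
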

\begin{proof} Clearly $d_\cJ^\cL$ is non-degenerate and symmetric. Hence, we check the triangle inequality.
Without loss of generality, we can consider $\cF,\,\cG,\,\cH\in\cL\cO_\cJ(T_\cF)$. In this case, there exist $G,\,H\in\cG\ell_\cJ$ such that $T_\cG=G\,T_\cF$ and $T_\cH=H\,T_\cG$; thus, $HG\in\cG\ell_\cJ$ is such that 
$HGT_\cF=T_\cH$. We argue as in the proof of \cite[Thm. 2.7]{RB99} and consider
\begin{eqnarray*}
\|HG-I\|_{\Phi} &=&\| (H-I)(G-I)+H+G-2\|_{\Phi}\\
                & \leq & \|H-I\|_{\Phi} \|G-I\|_\Phi + \|H-I\|_{\Phi}+ \|G-I\|_\Phi \\
								&=& (\|H-I\|_{\Phi}+1) (\|G-I\|_\Phi +1)-1\,.
\end{eqnarray*} 
We point out that in the previous inequalities, we have used that the norm $\| \, \cdot \, \|_\Phi$ is sub-multiplicative. Hence, 
$$
\log (\|HG-I\|_{\Phi} +1) \leq \log (\|H-I\|_{\Phi}+1) +\log (\|G-I\|_{\Phi}+1) \,.
$$
Similarly, $\log (\|(HG)^{-1}-I\|_{\Phi}+1) \leq \log (\|H^{-1}-I\|_{\Phi}+1) +\log (\|G^{-1}-I\|_{\Phi}+1)$. The previous facts imply the triangle inequality for $d_\cJ^\cL$.
\end{proof}

There are several problems associated with the previous metrics. On the one hand, it would be interesting to obtain a closed form, or at least o more explicit variational formula for $d_\cJ^\cL(\cF,\cG)$. On the other hand, 
it is natural to consider the problems of computing the distances between $\cF$ and some subsets of $\cL\cO_\cJ(T_\cF)$ (e.g. the set of Parseval frames in $\cL\cO_\cJ(T_\cF)$ or the class of oblique duals of $\cF$ in $\cL\cO_\cJ(T_\cF)$ ) with respect to the metrics $d_\cJ$ and $d_\cJ^\cL$. The following results deal with some approximation problems associated with the distance $d_\cJ$ between frames for subspaces, induced by a symmetric norming function $\Phi$. We will deal with the case of $d_\cJ^\cL$ elsewhere.

\begin{teo}\label{teo para framesx28}
Let let $\cF=\{f_n\}_{n \geq 1}$ be a frame for a closed subspace $\cS\subset \cH$. 
Let $\cJ$ a proper operator ideal and let $\cJ_0=P_{\cS}\cJ |_{\cS}\subset \cB(\cS)$ be the compression of $\cJ$ to $\cS$. Then the following conditions are equivalent: 
\begin{itemize}
\item[i)] $S_\cF|_{\cS}\in\cG\ell _{\cJ_0}$; 
\item[ii)] $\cF$ and $\cF^\#$ are $\cJ$-equivalent;
\item [iii)] $\cF$ is $\cJ$-equivalent to some of its oblique duals;
\item [iv)] $\cF$ is $\cJ$-equivalent to its associated Parseval frame $\tilde \cF$;
\item [v)] $\cF$ is $\cJ$-equivalent to some Parseval frame for a closed subspace of $\cH$.
\end{itemize}
\end{teo}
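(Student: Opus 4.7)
My plan is to recognize this theorem as a direct frame-theoretic reformulation of Theorem \ref{teo de ops para framesx28}, applied to the synthesis operator $A := T_\cF \in \cC\cR$ with polar decomposition $A = V_A |A|$. The equivalence classes of $\cJ$-equivalent frames correspond exactly to the left orbits $\cL\cO_\cJ(A)$, so the task is to translate each item $i)$-$v)$ into one of the equivalent conditions in Theorem \ref{teo de ops para framesx28}, and then invoke that theorem.

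First I would record the dictionary. By definition, $\cF$ and $\cG$ are $\cJ$-equivalent iff $T_\cG \in \cL\cO_\cJ(T_\cF)$. Moreover, $T_{\cF^\#} = (T_\cF^*)^\dagger$ (computed in the paragraph after Definition \ref{rem sobre duales}), $T_{\tilde\cF} = V_A$ (Remark \ref{rem defi pars asoc}), and $\cG$ is a Parseval frame for a closed subspace iff $T_\cG$ is a partial isometry. Finally, $\cG$ is an oblique dual of $\cF$ precisely when $T_\cG T_\cF^*|_{\cS}=I_{\cS}$ and $T_\cF T_\cG^*|_{\cT}=I_{\cT}$, i.e.\ with $B=T_\cG$ we have $AB^*|_{R(A)}=I_{R(A)}$ and $BA^*|_{R(B)}=I_{R(B)}$. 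Under this dictionary the items become:
\begin{itemize}
\item $ii) \Leftrightarrow (A^*)^\dagger \in \cL\cO_\cJ(A)$, which is item $ii)$ of Theorem \ref{teo de ops para framesx28};
\item $iii) \Leftrightarrow $ item $iii)$ of Theorem \ref{teo de ops para framesx28};
\item $iv) \Leftrightarrow V_A \in \cL\cO_\cJ(A)$, item $v)$ of Theorem \ref{teo de ops para framesx28};
\item $v) \Leftrightarrow $ the existence of a partial isometry $V \in \cL\cO_\cJ(A)$, item $vi)$ of Theorem \ref{teo de ops para framesx28}.
\end{itemize}
The equivalence $ii) \Leftrightarrow iii) \Leftrightarrow iv) \Leftrightarrow v)$ is then immediate from Theorem \ref{teo de ops para framesx28}.

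The only step that requires a small extra argument is the equivalence of $i)$ with item $i)$ of Theorem \ref{teo de ops para framesx28}, namely $AA^* - P_{R(A)} \in \cJ$. I would argue as follows. Since $R(AA^*) = \cS$ and $N(AA^*) = \cS^\perp$, both $AA^*$ and $P_\cS$ leave $\cS$ invariant and vanish on $\cS^\perp$; in the block decomposition $\cH = \cS \oplus \cS^\perp$,
\[
AA^* - P_\cS = \begin{pmatrix} S_\cF|_\cS - I_\cS & 0 \\ 0 & 0 \end{pmatrix}.
\]
Therefore, if $AA^* - P_\cS \in \cJ$, its compression $S_\cF|_\cS - I_\cS = P_\cS(AA^*-P_\cS)|_\cS$ lies in $\cJ_0$ by definition of the compressed ideal; conversely, if $S_\cF|_\cS - I_\cS = P_\cS T|_\cS$ for some $T\in\cJ$, then $AA^*-P_\cS = P_\cS T P_\cS \in \cJ$ because $\cJ$ is a two-sided ideal. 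Combined with the fact that $S_\cF|_\cS$ is automatically invertible on $\cS$ (as $A\in\cC\cR$ forces $S_\cF|_\cS$ to be a positive invertible operator on $\cS$), this shows $i) \Leftrightarrow AA^* - P_{R(A)} \in \cJ$.

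The main obstacle is really just bookkeeping; there is no hard analysis since all the heavy lifting has been done in Theorem \ref{teo de ops para framesx28}. The only delicate point is ensuring the correct interpretation of $\cG\ell_{\cJ_0}$ on the subspace $\cS$, which is why I would spell out the compression argument carefully. Combining the dictionary with Theorem \ref{teo de ops para framesx28} closes the proof.
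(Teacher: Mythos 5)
Your argument is correct and follows essentially the same approach as the paper: translate items $i)$--$v)$ into conditions on $A = T_\cF$ via the identities $T_{\cF^\#} = (A^*)^\dagger$, $T_{\tilde\cF} = V_A$, and the oblique-dual characterization, then invoke Theorem \ref{teo de ops para framesx28}. Your block-decomposition argument along $\cH = \cS \oplus \cS^\perp$ relating $S_\cF|_\cS \in \cG\ell_{\cJ_0}$ to $AA^* - P_{R(A)} \in \cJ$ is a welcome elaboration of a step the paper leaves implicit.
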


\begin{proof}
The proof of the equivalences above follows from a convenient interpretation of the equivalences in Theorem \ref{teo de ops para framesx28}.
Indeed, if we let $A=T_\cF\in \cC\cR$ with polar decomposition $A=V_A|A|$, then $(A^*)^\dagger=T_{\cF^\#}$, $V_A=T_{\tilde \cF}$. These remarks together with  Theorem \ref{teo de ops para framesx28} show the equivalence of items $i)$, $ii)$, $iv)$ and $v)$. Clearly, $ii)$ implies $iii)$. Finally, the fact that item $iii)$ implies any of the other items above follows from the second part of Definition \ref{rem sobre duales} and Theorem \ref{teo de ops para framesx28}.
\end{proof}

\begin{teo}\label{parseval mas cercano}
Let $\cJ=\fS_\Phi$ be a symmetrically-normed ideal and let $\cF=\{f_n\}_{n \geq 1}$ be a frame for a closed subspace $\cS\subset \cH$. 
\ben
\item Assume that $\cF$ is $\cJ$-equivalent to $\cG_1$, where $\cG_1$ is a Parseval frame for a closed subspace of $\cH$. Then 
$d_\cJ(\cF,\tilde \cF)\leq d_\cJ(\cF, \cG_1)$, where $\tilde \cF$ is the Parseval frame associated to $\cF$.
\item Assume that $\cF$ is $\cJ$-equivalent to $\cG_2$, where $\cG_2$ is an oblique dual of $\cF$. Then 
$d_\cJ(\cF,\cF^\#)\leq d_\cJ(\cF, \cG_2)$, where $\cF^\#$ is the canonical dual of $\cF$.
\een
 Moreover, if $\Phi$ is a strictly Schur-convex symmetric norming function and 
$d_\cJ(\cF,\tilde \cF)= d_\cJ(\cF, \cG_1)$ then $\cG_1=\tilde \cF$. Similarly, if 
$d_\cJ(\cF,\cF^\#)= d_\cJ(\cF, \cG_2)$, then $\cG_2=\cF^\#$.
\end{teo}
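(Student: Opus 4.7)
The plan is to derive the two approximation statements as direct translations of Theorem \ref{dist a isometrias y inversas parciales} via the synthesis operator dictionary already recorded in Remarks \ref{rem sobre duales} and \ref{rem defi pars asoc}. In both cases, the key identifications are: if $A=T_\cF$ has polar decomposition $A=V_A|A|$, then $V_A=T_{\tilde \cF}$ and $(A^*)^\dagger=T_{\cF^\#}$. Moreover $d_\cJ(\cF,\cG)=\|T_\cF-T_\cG\|_\Phi$ whenever the two frames are $\cJ$-equivalent, so the problem reduces to an operator-level minimization inside $\cL\cO_\cJ(T_\cF)\subset \cO_\cJ(T_\cF)$.

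For part $1$, observe that since $\cG_1$ is a Parseval frame for a closed subspace, $T_{\cG_1}$ is a partial isometry (Remark \ref{rem defi pars asoc}). By $\cJ$-equivalence, $T_{\cG_1}\in\cL\cO_\cJ(T_\cF)\subset\cO_\cJ(T_\cF)$, so Theorem \ref{dist a isometrias y inversas parciales}(1) applied with $A=T_\cF$ and $V=T_{\cG_1}$ yields
\[
d_\cJ(\cF,\tilde\cF)=\|T_\cF-V_{T_\cF}\|_\Phi\leq \|T_\cF-T_{\cG_1}\|_\Phi=d_\cJ(\cF,\cG_1).
\]
For part $2$, I would verify that the oblique dual hypotheses match the algebraic conditions of Theorem \ref{dist a isometrias y inversas parciales}(2). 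Indeed, if $\cG_2$ is an oblique dual of $\cF$ and $\cT=R(T_{\cG_2})$, then by Definition \ref{rem sobre duales} we have $T_\cF T_{\cG_2}^*|_{\cS}=I_\cS$ and $T_{\cG_2} T_\cF^*|_\cT=I_\cT$, which are exactly the identities $AB^*|_{R(A)}=I_{R(A)}$ and $BA^*|_{R(B)}=I_{R(B)}$ with $A=T_\cF$, $B=T_{\cG_2}$. Combined with $\cJ$-equivalence (so $T_{\cG_2}\in \cL\cO_\cJ(T_\cF)$), Theorem \ref{dist a isometrias y inversas parciales}(2) gives
\[
d_\cJ(\cF,\cF^\#)=\|T_\cF-(T_\cF^*)^\dagger\|_\Phi\leq \|T_\cF-T_{\cG_2}\|_\Phi=d_\cJ(\cF,\cG_2).
\]

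For the uniqueness statements, assume $\Phi$ is strictly Schur-convex. The case of equality in part $1$ triggers the corresponding equality case in Theorem \ref{dist a isometrias y inversas parciales}(1), which forces $T_{\cG_1}=V_{T_\cF}=T_{\tilde\cF}$; since a frame is determined by its synthesis operator (via $g_n=T_{\cG_1}(e_n)=T_{\tilde\cF}(e_n)=\tilde f_n$), we conclude $\cG_1=\tilde\cF$. Equality in part $2$ similarly forces $T_{\cG_2}=(T_\cF^*)^\dagger=T_{\cF^\#}$, whence $\cG_2=\cF^\#$. The only real point of care is the faithful dictionary between frame-theoretic notions (Parseval frame, oblique dual, canonical dual, associated Parseval) and their operator counterparts (partial isometry, the two-sided identities on the ranges, Moore--Penrose inverse of the adjoint, polar partial isometry); once this is laid out, the two statements are immediate consequences of Theorem \ref{dist a isometrias y inversas parciales}, so there is no substantial remaining obstacle.
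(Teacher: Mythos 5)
Your proof is correct and takes essentially the same approach as the paper, which also deduces the theorem directly from Theorem \ref{dist a isometrias y inversas parciales} using the synthesis-operator dictionary $T_{\tilde\cF}=V_{T_\cF}$, $T_{\cF^\#}=(T_\cF^*)^\dagger$, and the translation of the Parseval and oblique-dual conditions into the operator identities required there. You merely spell out in more detail the verification of the hypotheses, which the paper leaves as ``an immediate consequence.''
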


\begin{proof}
The proof of the equivalences above is now an immediate consequence of Theorem \ref{dist a isometrias y inversas parciales}.
\end{proof}

\begin{rem}
Let $\cJ=\fS_\Phi$ be a  symmetrically-normed  ideal and let $\cF=\{f_n\}_{n \geq 1}$ be a frame for a closed subspace $\cS\subset \cH$. Let $\cG=\{g_n\}_{n\geq 1}$ be a Parseval frame for a closed subspace $\cT\subset \cH$  such that $\cG$ is weakly similar to $\cF$ and $T_\cF-T_\cG\in\cJ$.
Then, $N(T_\cG)=N(T_\cF)$, so by Theorem \ref{aplic otro arg1} we get that $\cG\in\cL\cO_\cJ(\cF)$. In this case Theorem \ref{parseval mas cercano} applies. If we consider the particular case where $\Phi=\|\cdot\|_2$ is the $2$-norm, which is a strictly Schur-convex symmetric norming function, we get that
$$
d_{\fS_2}(\cF,\cG)^2=\|T_\cF-T_\cG\|_2^2=\sum_{n\geq 1} \|f_n-g_n\|^2 \geq \sum_{n\geq 1} \|f_n-\tilde f_n\|^2=\|T_\cF-T_{\tilde \cF}\|_2^2=d_{\fS_2}(\cF,\tilde \cF)^2\,.
$$ Moreover, equality holds if and only if $\cG=\tilde \cF$.  
Thus, Theorem \ref{parseval mas cercano} extends \cite[Theorem 2.3]{FPT02} to symmetrically-normed ideals corresponding to strictly Schur-convex norming functions. 
Similarly, using a convenient re-interpretation of Theorem \ref{dist a isometrias y inversas parciales} in the context of frame theory our results extend \cite[Theorem 4.6]{EC19}.
\end{rem}

\subsection*{Acknowledgment}
This research was partially supported by  CONICET (PIP 2021/2023 11220200103209CO), ANPCyT (2015 1505/ 2017 0883) and FCE-UNLP (11X829).

{\small

}

\bigskip

{\sc (Eduardo Chiumiento)} {Departamento de  Matem\'atica \& Centro de Matem\'atica La Plata, FCE-UNLP, Calles 50 y 115, 
(1900) La Plata, Argentina  and Instituto Argentino de Matem\'atica, `Alberto P. Calder\'on', CONICET, Saavedra 15 3er. piso,
(1083) Buenos Aires, Argentina.}     
                                               
\noi e-mail: {\sf eduardo@mate.unlp.edu.ar}   

\bigskip

{\sc (Pedro Massey)} {Departamento de  Matem\'atica \& Centro de Matem\'atica La Plata, FCE-UNLP, Calles 50 y 115, 
(1900) La Plata, Argentina  and Instituto Argentino de Mate\-m\'atica, `Alberto P. Calder\'on', CONICET, Saavedra 15 3er. piso,
(1083) Buenos Aires, Argentina.}

\noi e-mail: {\sf massey@mate.unlp.edu.ar}

\end{document}